\newtheorem{theorem}{Theorem}[section] % 1st argument is your name for it
\newtheorem{lemma}[theorem]{Lemma}     % 2nd argument is what is printed
\newtheorem{corollary}[theorem]{Corollary}
\newtheorem{proposition}[theorem]{Proposition}
\newcommand{\F}{\mathcal{F}}
\newcommand{\ba}{\backslash}
\newcommand{\btu}{\bigtriangleup}
\title[Embedded graphs and delta-matroids]% end with percent
 {On the interplay between embedded graphs and delta-matroids} % This is the full title of the paper
\author[C. Chun, I. Moffatt, S. D. Noble and R. Rueckriemen]{Carolyn Chun, Iain Moffatt, Steven D. Noble and Ralf Rueckriemen}
\begin{document}
\maketitle

%\linenumbers

\begin{abstract}
The mutually enriching relationship between graphs and matroids has motivated discoveries in both fields. In this paper, we exploit the similar relationship between embedded graphs and delta-matroids. There are well-known connections between geometric duals of plane graphs and duals of matroids. We obtain analogous connections for various types of duality in the literature for graphs in surfaces of higher genus and delta-matroids. Using this interplay, we establish a rough structure theorem for delta-matroids that are twists of matroids, we translate Petrie duality on ribbon graphs to loop complementation on delta-matroids, and we prove that ribbon graph polynomials, such as the Penrose polynomial, the characteristic polynomial, and the transition polynomial, are in fact delta-matroidal.  We also express the Penrose polynomial as a sum of characteristic polynomials.
\end{abstract}

\maketitle

%\tableofcontents

\section{Overview}
Graph theory and matroid theory are mutually enriching. As reported in~\cite{Oxley01}, Tutte famously observed that, ``If a theorem about graphs can be expressed in terms of edges and circuits alone it probably exemplifies a more general theorem about matroids''. In~\cite{CMNR} we proposed that a similar claim holds true for topological graph theory and delta-matroid theory, namely that, ``If a theorem about embedded graphs can be expressed in terms of its spanning quasi-trees then it probably exemplifies a more general theorem about delta-matroids''. In that paper we provided evidence for this claim by showing that, just as with graph and matroid theory, many fundamental definitions and results in topological graph theory and delta-matroid theory are compatible with each other (in the sense that they canonically translate from one setting to the other).
A significant consequence of this connection is that the geometric ideas of topological graph theory provide insight and intuition into the structure of delta-matroids, thus pushing forward the development of both areas.
Here we provide further support for our claim above by presenting results for delta-matroids that are inspired by recent research on ribbon graphs.

We are principally concerned with duality, which for delta-matroids is a much richer and more varied notion than for matroids.
The concepts of duality for plane and planar graphs, and for graphic matroids are intimately connected: the dual of the cycle matroid of an embedded graph corresponds to the matroid of the dual graph (i.e., $M(G)^*=M(G^*)$) if and only if the graph is plane. Moreover, the dual of the cycle matroid of a graph is graphic if and only if the graph is planar. The purpose of this paper is to extend these fundamental graph duality--matroid duality relationships from graphs in the plane to graphs embedded on higher genus surfaces. To achieve this requires us to move from matroids to the more general setting of delta-matroids.

Moving beyond plane and planar graphs opens the door to the various notions of the ``dual'' of an embedded graph that appear in the topological graph theory literature. Here we consider the examples of Petrie duals and direct derivatives~\cite{Wil79}, partial duals~\cite{Ch1} and twisted duals~\cite{EMM}. We will see that these duals are compatible with existing constructions in delta-matroid theory, including twists~\cite{ab1} and loop complementation~\cite{BH11}. We take advantage of the geometrical insights provided by topological graph theory to deduce and prove new structural results for delta-matroids and for their polynomial invariants.

Much of the very recent work on delta-matroids appears in a series of papers by Brijder and Hoogeboom~\cite{BH11,BH13,BHpre2,BH12}, who were originally motivated by an application to gene assembly in one-cell organisms known as cilliates. Their study of the effect of the principal pivot transform on symmetric binary matrices led them to the study of binary delta-matroids. As we will see, the fundamental connections made possible by the abstraction to delta-matroids allows us to view notions of duality in the setting of symmetric binary matrices and the apparently unconnected setting of ribbon graphs as exactly the same thing.

The structure of this paper is as follows. We begin by reviewing delta-matroids, embedded graphs and their various types of duality, and the connection between delta-matroids and embedded graphs. Here we shall describe embedded graphs as ribbon graphs (ribbon graphs are equivalent to graphs cellularly embedded in surfaces). In Section~\ref{3rd} we use the geometric perspectives offered by topological graph theory to present a rough structure theorem for the class of delta-matroids that are twists of matroids. We give some applications to Eulerian matroids, extending a result of Welsh~\cite{We69}.
In Section~\ref{4th}, we show that Petrie duality can be seen as the analogue of a more general delta-matroid operation, namely loop complementation. We show that a group action on delta-matroids due to Brijder and Hoogeboom~\cite{BH11} is the analogue of twisted duality from Ellis-Monaghan and Moffatt~\cite{EMM}. We apply the insights provided by this connection to give a number of structural results about delta-matroids.
In Section~\ref{5th} we apply our results to graph and matroid polynomials. We show that the Penrose polynomial and transition polynomial~\cite{Ai97,EMM11a,Ja90,Pen71} are delta-matroidal, in the sense that they are determined (up to a simple pre-factor) by the delta-matroid of the underlying ribbon graph, and are compatible with Brijder and Hoogeboom's Penrose and transition polynomials of~\cite{BH13}. We relate the Bollob\'as--Riordan and Penrose polynomials to the transition polynomial and find recursive definitions of these polynomials. Finally, we give a surprising expression for the Penrose polynomial of a vf-safe delta-matroid in terms of the characteristic polynomial.

Throughout the paper we emphasise the interaction and compatibility between delta-matroids and ribbon graphs. We provide evidence that this new perspective offered by topological graph theory enables significant advances to be made in the theory of delta-matroids.

\section{Background on delta-matroids}

\subsection{Delta-matroids}

A \emph{set system} is a pair $D=(E,{\mathcal{F}})$ where $E$ is a non-empty finite set, which we call the \emph{ground set}, and $\mathcal{F}$ is a collection of subsets of $E$, called \emph{feasible sets}.
We define $E(D)$ to be $E$ and $\mathcal{F}(D)$ to be $\mathcal{F}$.
A set system $(E,{\mathcal{F}})$ is \emph{proper} if $\mathcal{F}$ is not empty; it is \emph{trivial} if $E$ is empty. For sets $X$ and $Y$,
their \emph{symmetric difference} is denoted by $X\bigtriangleup Y$ and is defined to be $(X\cup Y)-(X\cap Y)$.
Throughout this paper, we will often omit the set brackets in the case of a single element set.
For example, we write $E-e$ instead of $E-\{e\}$, or $F\cup e$ instead of $F\cup \{e\}$.

A \emph{delta-matroid} is a proper set system $D=(E,{\mathcal{F}})$ that satisfies the Symmetric Exchange Axiom:
\begin{axiom}[Symmetric Exchange Axiom]
\label{sea}
 For all $(X,Y,u)$ with $X,Y\in \mathcal{F}$ and $u\in X\bigtriangleup Y$, there is an element $v\in X\bigtriangleup Y$ such that $X\bigtriangleup \{u,v\}$ is in $\mathcal{F}$.
\end{axiom}

Note that we allow $v=u$ in the Symmetric Exchange Axiom.
These structures were first studied by Bouchet in~\cite{ab1}.
If all of the feasible sets of a delta-matroid are equicardinal, then the delta-matroid is a \emph{matroid} and we refer to its feasible sets as its \emph{bases}.
If a set system forms a matroid $M$, then we usually denote $M$ by $(E,\mathcal{B})$, and define $E(M)$ to be $E$ and $\mathcal{B}(M)$ to be $\mathcal{B}$, the collection of bases of $M$. Every subset of every basis is an \emph{independent set}.
For a set $A\subseteq E(M)$, the \emph{rank of $A$}, written $r_M(A)$, or simply $r(A)$ when the matroid is clear, is the size of the largest intersection of $A$ with a basis of $M$.

For a delta-matroid $D=(E,\mathcal{F})$, let $\mathcal{F}_{\max}(D)$ and $\mathcal{F}_{\min}(D)$ be the set of feasible sets with maximum and minimum cardinality, respectively. We will usually omit $D$ when the context is clear.
Let $D_{\max}:=(E,\mathcal{F}_{\max})$ and let $D_{\min}:=(E,\mathcal{F}_{\min})$.
Then $D_{\max}$ is the \emph{upper matroid} and $D_{\min}$ is the \emph{lower matroid} for $D$. These were defined by Bouchet in~\cite{ab2}.
It is straightforward to show that the upper matroid and the lower matroid are indeed matroids.
If the sizes of the feasible sets of a delta-matroid all have the same parity, then we say that it is \emph{even}.

For a proper set system $D=(E,\mathcal{F})$, and $e\in E$, if $e$ is in every feasible set of $D$, then we say that $e$ is a \emph{coloop of $D$}.
If $e$ is in no feasible set of $D$, then we say that $e$ is a \emph{loop of $D$}.
 If $e$ is not a coloop, then we define $D$ \emph{delete} $e$, written $D\ba e$, to be $(E-e, \{F : F\in \mathcal{F}\text{ and } F\subseteq E-e\})$.
 If $e$ is not a loop, then we define $D$ \emph{contract} $e$, written $D/e$, to be $(E-e, \{F-e : F\in \mathcal{F}\text{ and } e\in F\})$.
If $e$ is a
loop or a coloop, then one of $D\ba e$ and $D/ e$ has already been
defined, so we can set $D/e=D\ba e$.
If $D$ is a delta-matroid then both $D\ba e$ and $D/e$ are delta-matroids (see~\cite{BD91}).
If $D'$ is a delta-matroid obtained from $D$ by a sequence of deletions and contractions, then $D'$ is independent of the order of the deletions and contractions used in its construction (see~\cite{BD91}). Any delta-matroid obtained from $D$ in such a way is called a \emph{minor} of $D$.
If $D'$ is a minor of $D$ formed by deleting the elements of $X$ and contracting the elements of $Y$ then we write
$D'=D\setminus X/Y$.

Twists are one of the fundamental operations of delta-matroid theory.
Let $D=(E,{\mathcal{F}})$ be a set system.
For $A\subseteq E$, the \emph{twist} of $D$ with respect to $A$, denoted by $D* A$, is given by $(E,\{A\bigtriangleup X: X\in \mathcal{F}\})$.
The \emph{dual} of $D$, written $D^*$, is equal to $D*E$.
It follows easily from the identity $(F'_1\btu A)\btu(F'_2\btu A)=F'_1\btu F'_2$ that the twist of a delta-matroid is also a delta-matroid, as Bouchet showed in~\cite{ab1}.
However, if $D$ is a matroid, then $D*A$ need not be a matroid.
Note that a coloop or loop of $D$ is a loop or coloop, respectively, of $D^*$. Furthermore if $e\in E$, then $D/e=(D*e)\setminus e$ and $D\setminus e=(D*e)/e$.

For delta-matroids (or matroids) $D_1=(E_1,\mathcal{F}_1)$ and $D_2=(E_2,\mathcal{F}_2)$, where $E_1$ is disjoint from $E_2$, the \emph{direct sum of $D_1$ and $D_2$}, defined in~\cite{geelen} and written $D_1\oplus D_2$, is constructed by
\[D_1\oplus D_2:=(E_1\cup E_2,\{F_1\cup F_2: F_1\in \mathcal{F}_1\text{ and } F_2\in\mathcal{F}_2\}).\]
If $D=D_1\oplus D_2$, for some non-trivial $D_1$ and $D_2$, we say that $D$ is \emph{disconnected} and that $E(D_1)$ and $E(D_2)$ are \emph{separating}. A delta-matroid is \emph{connected} if it is not disconnected.

Binary delta-matroids form an important class of delta-matroids.
For a finite set $E$, let $C$ be a symmetric $|E|$ by $|E|$ matrix over $\mathrm{GF}(2)$, with rows and columns indexed, in the same order, by the elements of $E$.
Let $C\left[ A\right]$ be the principal submatrix of $C$ induced by the set $A\subseteq E$.
We define the delta-matroid $D(C)=(E,\F)$, where $A\in \F$ if and only if $C[A]$ is non-singular over $\mathrm{GF}(2)$. By convention $C[\emptyset]$ is non-singular.
Bouchet showed in~\cite{abrep} that $D(C)$ is indeed a delta-matroid.
A delta-matroid is said to be \emph{binary} if it has a twist that is isomorphic to $D(C)$ for some symmetric matrix $C$ over $\mathrm{GF}(2)$.

When applied to matroids, this definition of binary agrees with the usual definition of a binary matroid (see~\cite{abrep}).

\subsection{Ribbon graphs}\label{dbh}

A \emph{ribbon graph} $G =\left(V(G),E(G)\right)$ is a surface with boundary, represented as the union of two sets of discs: a set $V (G)$ of \emph{vertices} and a set of \emph{edges} $E (G)$ with the following properties.
\begin{enumerate}
 \item The vertices and edges intersect in disjoint line segments.
 \item Each such line segment lies on the boundary of precisely one vertex and precisely one edge. In particular, no two vertices intersect, and no two edges intersect.
 \item Every edge contains exactly two such line segments.
\end{enumerate}
See Figure~\ref{f1} for an example.

\begin{figure}
\centering
\subfigure[$G$]{
\labellist
 %\tiny\hair 2pt
 \pinlabel {$v$}  at  83 55
\pinlabel {$1$}  at  106 228
\pinlabel {$2$}   at    25 201
\pinlabel {$3$}  at    144 186
\pinlabel {$4$}   at   83 121
\pinlabel {$5$}   at     146 79
\pinlabel {$6$}  at   131 99
\pinlabel {$7$}   at    166 13
\pinlabel {$8$}   at  250 99
\endlabellist
\includegraphics[scale=.55]{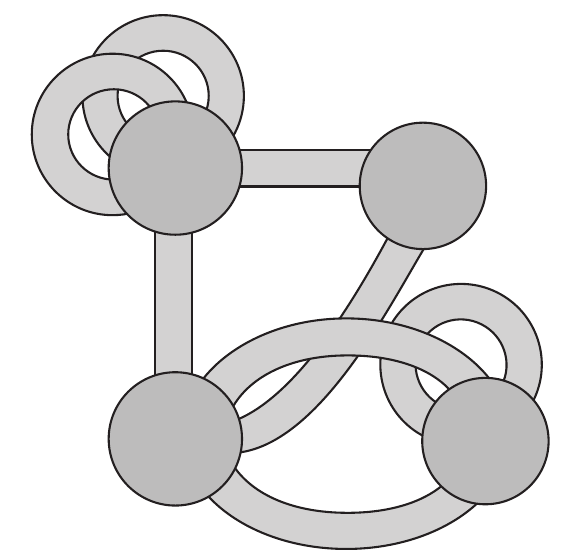}
\label{f1a}
}
\hspace{5mm}
\subfigure[$G/{\{3,8\}}  \ba \{1\} $]{
\labellist
 %\tiny\hair 2pt
\pinlabel {$2$}   at    25 201
\pinlabel {$5$}  at    142 150
\pinlabel {$4$}   at   80 121
\pinlabel {$6$}  at   173 121
\pinlabel {$7$}   at    134 29
\endlabellist
\includegraphics[scale=.55]{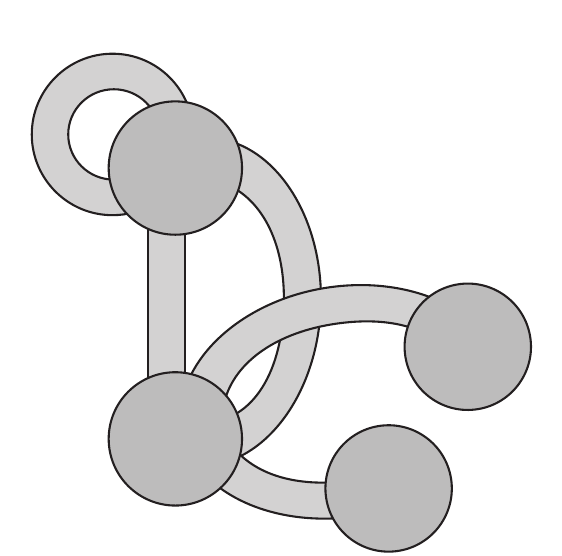}
\label{f1b}
}

%\hspace{-10mm}

\subfigure[$G^{\{1,6,7\}}=G^{\delta(\{1,6,7\})}$]{
\labellist
% \tiny\hair 2pt
\pinlabel {$1$}  at  125 211
\pinlabel {$2$}   at    53 187
\pinlabel {$3$}  at    160 171
\pinlabel {$4$}   at   109 109
\pinlabel {$5$}   at     215 109
\pinlabel {$6$}  at   161 86
\pinlabel {$8$}   at    161 51
\pinlabel {$7$}   at  161 16
\endlabellist
\includegraphics[scale=.55]{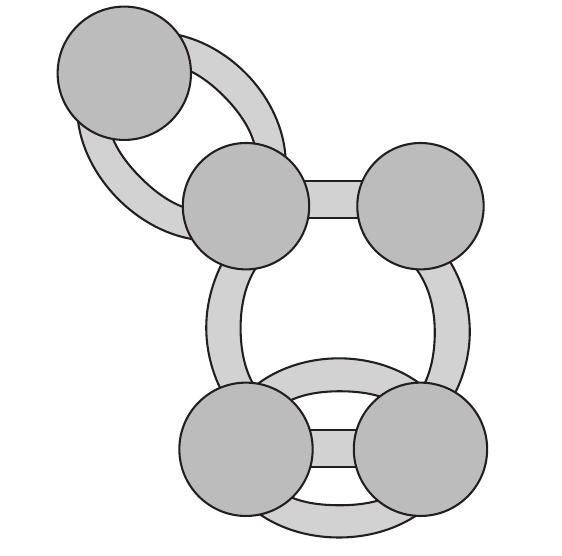}
\label{f1c}
}
\hspace{5mm}
\subfigure[$G^{\tau(\{3,8\})}$]{
\labellist
 %\tiny\hair 2pt
\pinlabel {$1$}  at  106 228
\pinlabel {$2$}   at    25 201
\pinlabel {$3$}  at    130 186
\pinlabel {$4$}   at   83 121
\pinlabel {$5$}   at     146 79
\pinlabel {$6$}  at   131 99
\pinlabel {$7$}   at    166 12
\pinlabel {$8$}   at  250 99
\endlabellist
\includegraphics[scale=.55]{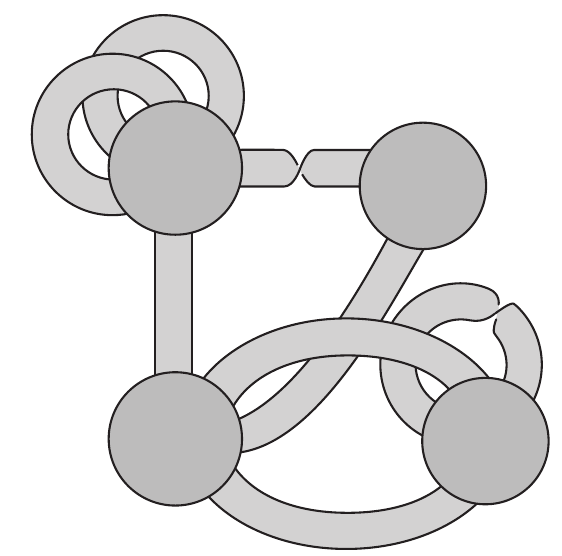}
\label{f1d}
}
\caption{An illustration of ribbon graph operations}
\label{f1}
\end{figure}

It is well-known that ribbon graphs are just descriptions of graphs cellularly embedded
in surfaces (see for example~\cite{EMMbook,GT87}). If $G$ is a cellularly embedded graph, then a ribbon
graph representation results from taking a small neighbourhood of
the cellularly embedded graph $G$, and deleting its complement. On the other hand, if $G$ is a
ribbon graph, then, topologically, it is a surface with boundary. Capping off the holes, that is, `filling in' each hole by identifying its boundary component with the boundary of a disc, results in a ribbon graph embedded in a closed surface from which a graph embedded in the surface is readily obtained. Figure~\ref{f.desc} shows an embedded graph described as both a cellularly embedded graph and a ribbon graph.

\begin{figure}[t]
\centering
\subfigure[A cellularly embedded graph $G$]{
\labellist \small\hair 2pt
\pinlabel {$1$} at   84 14
\pinlabel {$2$} at    135 19
\pinlabel {$3$} at    68 32
\pinlabel {$4$} at   115 32
\endlabellist
\raisebox{0mm}{\includegraphics[height=2.5cm]{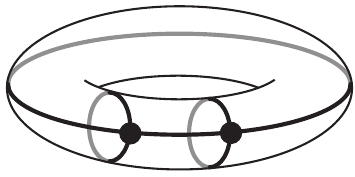}}
 \label{f.desca}
}
\qquad\qquad
\subfigure[$G$ as a ribbon graph]{
\labellist \small\hair 2pt
\pinlabel {$1$} at   60 22.7
\pinlabel {$2$} at   54 45.6
\pinlabel {$3$} at   38 7
\pinlabel {$4$} at   79 7
\endlabellist
\includegraphics[height=2.5cm]{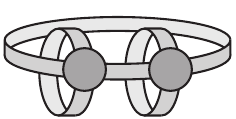}
 \label{f.descb}
}
\caption{Embedded graphs and ribbon graphs}
\label{f.desc}
\end{figure}

Two ribbon graphs are \emph{equivalent} if there is a homeomorphism (which should be orientation preserving if the ribbon graphs are orientable) from one to the other that preserves the vertex-edge structure, adjacency, and cyclic ordering of the half-edges at each vertex. (Note that ribbon graphs are equivalent if and only if they describe equivalent cellularly embedded graphs.)
 A ribbon graph is \emph{orientable} if it is an orientable surface, and is \emph{non-orientable} otherwise. Its \emph{genus} is its genus as a surface, and we say it is \emph{plane} if it is of genus zero (thus here we allow disconnected plane ribbon graphs).

A ribbon graph is a graph with additional structure and so standard graph terminology carries over to ribbon graphs.
If $G=(V,E)$ is a ribbon graph, then $v(G)$ and $e(G)$ denote $|V|$ and $|E|$, respectively. Furthermore, $k(G)$ denotes the number of connected components in $G$; the \emph{rank} of $G$, denoted by $r(G)$, is defined to be $v(G)-k(G)$; the \emph{nullity} of $G$, denoted by $n(G)$, is defined to be $e(G)-r(G)$; and $f(G)$ is the number of boundary components of the surface defining the ribbon graph.
The \emph{Euler genus}, $\gamma(G)$, of $G$ is the genus of $G$ if $G$ is non-orientable, and is twice its genus if $G$ is orientable. Euler's formula gives $\gamma(G)=2k(G)-v(G)+e(G)-f(G)$.

If $A\subseteq E$, then $G\backslash A$ is the \emph{ribbon subgraph} of $G=(V,E)$ obtained by \emph{deleting} the edges in $A$.
The \emph{spanning subgraph} of $G$ on $A$ is $(V,A)= G\backslash A^c$. (We will frequently use the notational shorthand $A^c:= E-A$ in the context of graphs, ribbon graphs, matroids and delta-matroids.)

For each subset $A$ of $E$, we let $k(A)$, $r(A)$, $n(A)$, $f(A)$, and $\gamma(A)$ each refer to the spanning ribbon subgraph $(V,A)$ of $G$, where $G$ is given by context.

The definition of edge contraction, introduced in~\cite{BR2,Ch1}, is a little more involved than that of edge deletion.
Let $e$ be an edge of $G$ and $u$ and $v$ be its incident vertices, which are not necessarily distinct. Then $G/e$ denotes the ribbon graph obtained as follows: consider the boundary component(s) of $e\cup u \cup v$ as curves on $G$. For each resulting curve, attach a disc, which will form a vertex of $G/e$, by identifying its boundary component with the curve. Delete the interiors of $e$, $u$ and $v$ from the resulting complex.
We say that $G/e$ is obtained from $G$ by \emph{contracting} $e$. If $A\subseteq E$, $G/A$ denotes the result of contracting all of the edges in $A$ (the order in which they are contracted does not matter).
A discussion about why this is the natural definition of contraction for ribbon graphs can be found in~\cite{EMMbook}.
The local effect of contracting an edge of a ribbon graph is shown in Table~\ref{tablecontractrg}. Note that contracting an edge in $G$ may change the number of vertices, number of components, or orientability.
For instance, if $G$ is the orientable ribbon graph with one vertex and one edge, then contracting that edge results in the ribbon graph comprising two isolated vertices.

\begin{table}[t]
\centering
\begin{tabular}{|c||c|c|c|}\hline
 &  non-loop & non-orientable loop&orientable loop\\ \hline
\raisebox{6mm}{$G$} &
\includegraphics[scale=.25]{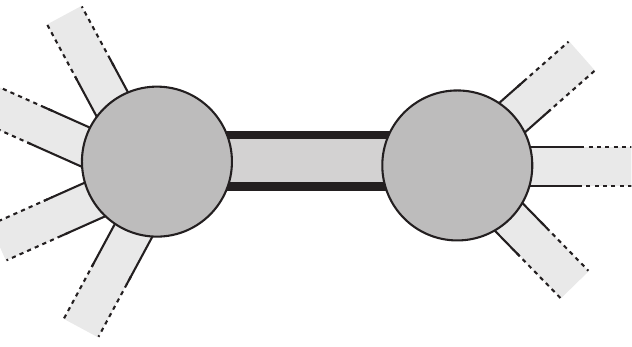} &\includegraphics[scale=.25]{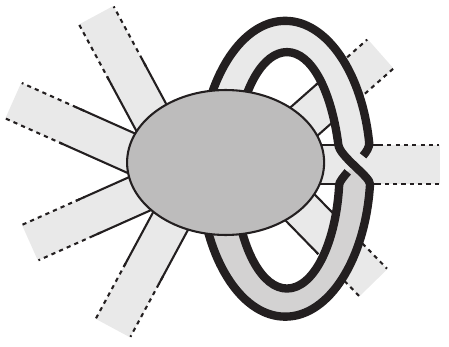} &\includegraphics[scale=.25]{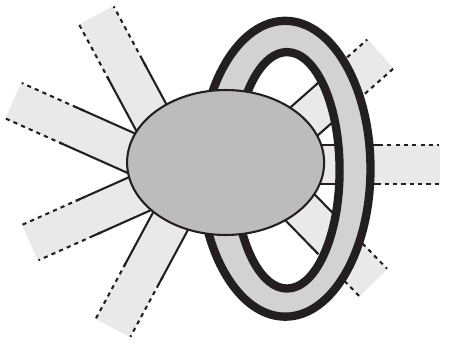}
\\ \hline
\raisebox{6mm}{$G\ba e$}
&
\includegraphics[scale=.25]{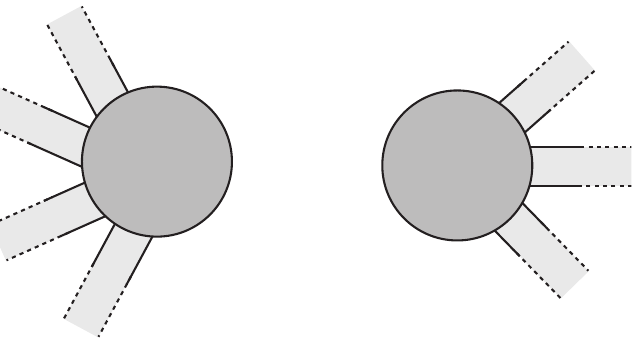} &\includegraphics[scale=.25]{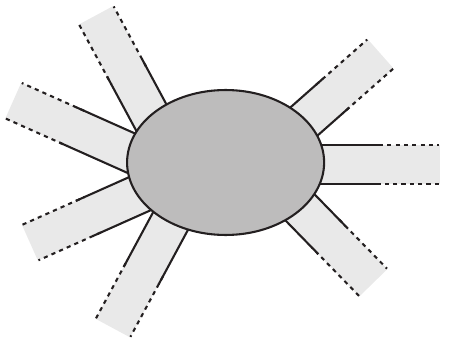}&\includegraphics[scale=.25]{ch4_35a}
\\ \hline
\raisebox{6mm}{\begin{tabular}{l} $G/e$ \end{tabular}}
&
\includegraphics[scale=.25]{ch4_35a} &\includegraphics[scale=.25]{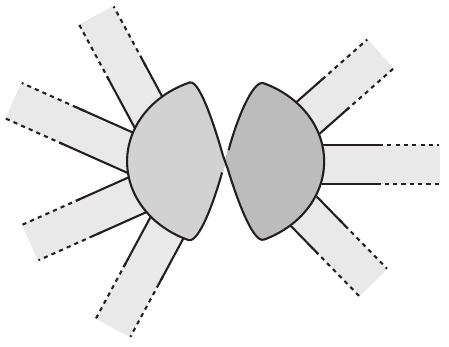}&\includegraphics[scale=.25]{ch4_38a} \\ \hline
\raisebox{6mm}{$G^{e}$} &
\includegraphics[scale=.25]{ch4_35} &\includegraphics[scale=.25]{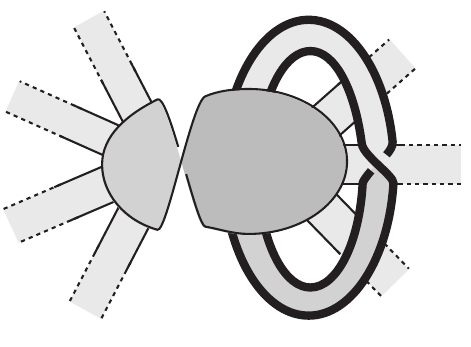} &\includegraphics[scale=.25]{ch4_38}
\\ \hline
\end{tabular}
\caption{Operations on an edge $e$ (highlighted in bold) of a ribbon graph}
\label{tablecontractrg}
\end{table}

A ribbon graph $H$ is a \emph{minor} of a ribbon graph $G$ if $H$ is obtained from $G$ by a sequence of edge deletions, edge contractions, and deletions of isolated vertices. See Figure~\ref{f1} for an example.

An edge in a ribbon graph is a \emph{bridge} if its deletion increases the number of components of the ribbon graph. It is a \emph{loop} if it is incident with only one vertex. A loop is a \emph{non-orientable loop} if, together with its incident vertex, it is homeomorphic to a M\"obius band, otherwise it is an \emph{orientable loop}.
Two cycles $C_1$ and $C_2$ in $G$ are said to be \emph{interlaced} if there is a vertex $v$  such that  $V(C_1)\cap V(C_2)=\{v\}$, and $C_1$ and $C_2$ are met in the cyclic order $C_1\,C_2\,C_1\,C_2$ when travelling round the boundary of the vertex $v$. A loop is \emph{non-trivial} if it is interlaced with some cycle in $G$. Otherwise the loop is \emph{trivial}.

Our interest here is in various notions of duality from topological graph theory. A slower exposition of the constructions here can be found in, for example,~\cite{CMNR,EMMbook}. We start with Chmutov's partial duals of~\cite{Ch1}. Let $G$ be a ribbon graph and $A\subseteq E(G)$. The \emph{partial dual} $G^{A}$ of $G$ is obtained as follows. Regard the boundary components of the spanning ribbon subgraph $(V(G),A)$ of $G$ as curves on the surface of $G$. Glue a disc to $G$ along each connected component of this curve and remove the interior of all vertices of $G$. The resulting ribbon graph is the \emph{partial dual} $G^{A}$.
The local effect of forming the partial dual with respect to an edge of a ribbon graph is shown in Table~\ref{tablecontractrg}.
 See Figure~\ref{f1} for an example.
 It is immediate from the definition (or see~\cite{Ch1}) that $G^{\emptyset}=G$ and $G/e= G^{e}\ba e$. The \emph{geometric dual} of $G$ can be defined by $G^*:=G^{E(G)}$.

Next we consider the Petrie dual (also known as the Petrial), $G^{\times}$, of $G$ (see Wilson~\cite{Wil79}).
 Let $G$ be a ribbon graph and $A\subseteq E(G)$. The \emph{partial Petrial}, $G^{\tau(A)}$, of $G$ is the ribbon graph obtained from $G$ by for each edge $e\in A$, choosing one of the two arcs $(a,b)$ where $e$ meets a vertex, detaching $e$ from the vertex along that arc giving two copies of the arc $(a,b)$, then reattaching it but by gluing $(a,b)$ to the arc $(b,a)$, where the directions are reversed (Informally, this can be thought of as adding a half-twist to the edge $e$.) The \emph{Petrie dual} of $G$ is $G^{\times}:=G^{\tau(E(G)}$. We often write $G^{\tau(e)}$ for $G^{\tau(\{e\})}$. See Figure~\ref{f1} for an example.

The partial dual and partial Petrial operations together gives rise to a group action on ribbon graphs and the concept of twisted duality from~\cite{EMM}.
Let $G$ be a ribbon graph and $A\subseteq E(G)$.
In the context of twisted duality we will use $G^{\delta(A)}$ to denote the partial dual $G^{A}$ of $G$.
Let $w=w_1w_2\cdots w_n$ be a word in the alphabet $\{\delta, \tau\}$. Then we define
 $G^{w(A)}:=(\cdots (( G^{w_1(A)} )^{w_{2}(A)} \cdots )^{w_n(A)}$.
Let $\mathfrak{G} := \langle \delta, \tau \mid \delta^2, \tau^2, (\delta\tau)^3\rangle$, which is just a presentation of the symmetric group of degree three. It was shown in~\cite{EMM} that $\mathfrak{G}$ acts on the set
$\mathcal{X} = \{ (G,A) : G\text{ a ribbon graph}, A\subseteq E(G)  \}$
by $ g(G,A) := (G^{g(A)},A)$ for $g \in \mathfrak{G} $.

Now suppose $G$ is a ribbon graph, $A,B\subseteq E(G)$, and $g,h\in \mathfrak{G}$. Define $G^{g(A)h(B)}:=\left(G^{g(A)}\right)^{h(B)}$. We say that two ribbon graphs $G$ and $H$ are {\em twisted duals} if there exist $A_1, \ldots ,A_n \subseteq E(G)$ and $g_1,\ldots, g_n \in \mathfrak{G}$ such that
$ H=G^{ g_1(A_1) g_2(A_2)\cdots g_n(A_n) }$.
Observe that,
\begin{enumerate}
\item if $A\cap B=\emptyset$, then $G^{g(A)h(B)}=G^{h(B)g(A)}$,
\item $G^{g(A)} = (G^{g(e)})^{g(A\backslash e)}$, and
\item $G^{g_1(A)}=G^{g_2(A)}$  if $g_1=g_2$ in the group $ \langle \delta, \tau \mid \delta^2, \tau^2, (\delta\tau)^3\rangle$.
\end{enumerate}
We note that Wilson's direct derivatives and opposite operators from~\cite{Wil79} result from restricting twisted duality to the whole edge set $E(G)$.

\subsection{Delta-matroids from ribbon graphs}
We briefly review the interactions between delta-matroids and ribbon graphs discussed in~\cite{CMNR} (proofs of all the results mentioned here can be found in this reference).
Let $G=(V,E)$ be a graph or ribbon graph. The \emph{cycle matroid} of $G$ is  $M(G):=(E, \mathcal{B})$ where $\mathcal{B}$ consists of the edge sets of the spanning subgraphs of $G$ that form a spanning tree when restricted to each connected component of $G$. In terms of ribbon graphs, a tree can be characterised as a genus 0 ribbon graph with exactly one boundary component. Dropping the genus 0 requirement gives a quasi-tree: a \emph{quasi-tree} is a ribbon graph with exactly one boundary component. Quasi-trees play the role of trees in ribbon graph theory, and replacing ``tree'' with ``quasi-tree'' in the definition of a cycle matroid results in a delta-matroid.
Let $G$ be a ribbon graph. Then the \emph{delta-matroid of $G$} is $D(G):=(E,\mathcal{F})$ where $\mathcal{F}$ consists of the edge sets of the spanning ribbon subgraphs of $G$ that form a quasi-tree when restricted to each connected component of $G$.

\begin{example}
Let $G$ be the ribbon graph of Figure~\ref{f1a}. Then $D(G)$ has 20 feasible sets, 10 of which are
$\{3,4,6 \}$,
$\{3,4,7\}$,
$\{3,5,6\}$,
$\{3,5,7\}$,
$\{4,5,6\}$,
$\{4,5,7\}$,
$\{3,4,5,6,7\}$,
$\{3,4,6,7,8 \}$,
$\{3,5,6,7,8\}$,
$\{4,5,6,7,8\}$. The remaining 10 are obtained by taking the union of each of these with $\{1,2\}$.
 It can be checked that $ D(G/{\{3,8\}} \ba \{1\}) = D(G)/{\{3,8\}} \ba \{1\}$ and $D(G^{\{1,6,7\}}) = D(G) \ast \{1,6,7\}$.
\end{example}

Fundamental delta-matroid operations and ribbon graph operations are compatible with each other, as in the following theorem. Part~\ref{t.compat.3} is from~\cite{CMNR}; the others are from~\cite{ab2}.
\begin{theorem}[(Bouchet~\cite{ab2}, Chun et al.~\cite{CMNR})]\label{t.compat}
Let $G=(V,E)$ be a ribbon graph. Then
\begin{enumerate}
\item \label{t.compat.1} $D(G)_{\min}=M(G)$ and  $D(G)_{\max}=(M(G^*))^*$;
\item \label{t.compat.2} $D(G)=M(G)$ if and only if $G$ is a plane ribbon graph;
\item \label{t.compat.5} $D (G)$ is a binary delta-matroid;
\item \label{t.compat.3} $D(G^A) = D(G) * A $, in particular $D(G^*)=D(G)^*$;
\item \label{t.compat.4} $D (G\ba e)= D(G)\ba e$ and $D(G/e)=D(G)/e$, for each $e\in E$.
\end{enumerate}
\end{theorem}
The significance of Theorem~\ref{t.compat}, as we will see, is that it provides the means to move between ribbon graphs and delta-matroids, giving new insights into the structure of delta-matorids.

For notational simplicity in this paper we will take advantage of the following abuse of notation. For disconnected graphs, a standard abuse of notation is to say that $T$ is a spanning tree of $G$ if the components of $T$ are spanning trees of the components of $G$. We will say that $Q$ is a \emph{spanning quasi-tree} of $G$ if the components of $Q$ are spanning quasi-trees of the components of $G$. Thus we can say that the feasible sets of $D(G)$ are the edge sets of the spanning quasi-trees of $G$. This abuse should cause no confusion.

\section{Twists of matroids}
\label{3rd}

Twists provide a way to construct one delta-matroid from another. As the class of matroids is not closed under twists, but every matroid is a delta-matroid, it provides a way to construct delta-matroids from matroids. Twisting therefore gives a way to uncover the structure of delta-matroids by translating results from the much better developed field of matroid theory.
For this reason, the class of delta-matroids that arise as twists of matroids is an important one. In this section we examine the structure of this class of delta-matroids. In particular, we provide both an excluded minor characterisation, and a rough structure theorem for this class.
Of particular interest here is the way that we are led to the results: we use ribbon graph theory to guide us. Our results provide support for the claim in this paper and in~\cite{CMNR} that ribbon graphs are to delta-matroids what graphs are to matroids.

In order to understand the class of delta-matroids that are twists of matroids, we start by looking for the ribbon graph analogue of the class of delta-matroids that are twists of matroids.
For this suppose that $G=(V,E)$ is a ribbon graph with delta-matroid $D=D(G)$. We wish to understand when $D$ is the twist of a matroid, that is, we want to determine if $D=M\ast A$ for some matroid $M=(E,\mathcal{B})$ and for some $A\subseteq E$. As twists are involutary, we can reformulate this problem as one of determining if  $D\ast B =M$ for some matroid $M$ and some $B\subseteq E$. By Theorem~\ref{t.compat}\ref{t.compat.3}, $D\ast B = D(G)\ast B =  D(G^B)$, but, by Theorem~\ref{t.compat}\ref{t.compat.2},  $D(G^B)$ is a matroid if and only if $G^B$ is a plane graph. So $D$ is a twist of a matroid if and only if $G$ is the partial dual of a plane graph.
Thus to understand the class of delta-matroids that are twists of matroids we look towards the class of ribbon graphs that are partial duals of plane graphs. Fortunately, due to connections with knot theory (see~\cite{Mo5}), this class of ribbon graphs has been fairly well studied with both a rough structure theorem and an excluded minor characterisation.

The following tells us that it makes sense to look for an excluded minor characterisation of twists of matroids.
\begin{theorem}
\label{pdminors}
The class of delta-matroids that are twists of matroids is minor-closed.
\end{theorem}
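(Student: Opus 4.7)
The plan is to reduce to a single deletion or contraction and then apply compatibility identities between twists and minor operations. Since every minor of $D$ is built by iterating single-element deletions and contractions, it suffices to show that if $D = M \ast A$ for some matroid $M$ on ground set $E$ and some $A \subseteq E$, then for each $e \in E$ both $D \ba e$ and $D/e$ are twists of matroids; an easy induction on the number of elements removed then handles arbitrary minors.

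The key step is to verify the following identities by directly unwinding $\mathcal{F}(M \ast A) = \{B \btu A : B \in \mathcal{B}(M)\}$ and tracking whether $e$ lies in $B \btu A$:
\begin{align*}
(M \ast A) \ba e &= \begin{cases} (M \ba e) \ast A & \text{if } e \notin A, \\ (M/e) \ast (A - e) & \text{if } e \in A, \end{cases} \\
(M \ast A)/e &= \begin{cases} (M/e) \ast A & \text{if } e \notin A, \\ (M \ba e) \ast (A - e) & \text{if } e \in A. \end{cases}
\end{align*}
For example, when $e \notin A$ the feasible sets of $(M \ast A) \ba e$ are the sets $B \btu A$ with $B \in \mathcal{B}(M)$ and $e \notin B \btu A$; since $e \notin A$ this condition reduces to $e \notin B$, which produces exactly the feasible sets of $(M \ba e) \ast A$. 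The other three cases are analogous. In every case the right-hand side is visibly a twist of a matroid, because matroids are themselves closed under deletion and contraction.

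The only real obstacle is bookkeeping: I need to check that the conditions under which deletion and contraction are defined align on both sides of each identity, and that the convention $D/e = D \ba e$ for $e$ a loop or coloop keeps everything consistent in the degenerate cases. This amounts to the easy observation that $e$ is a coloop of $M \ast A$ iff $e$ is a coloop of $M$ (when $e \notin A$) or a loop of $M$ (when $e \in A$), and dually for loops of $M \ast A$; so whenever the left-hand side is defined, the right-hand side is too, and they describe the same set system.
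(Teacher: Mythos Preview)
Your proof is correct and follows essentially the same approach as the paper: both establish minor-closure by verifying the four identities $(M\ast A)\ba e = (M\ba e)\ast A$ or $(M/e)\ast(A-e)$ and $(M\ast A)/e = (M/e)\ast A$ or $(M\ba e)\ast(A-e)$ according to whether $e\notin A$ or $e\in A$. You go a bit further than the paper in explicitly handling the loop/coloop bookkeeping, which is fine but not strictly necessary.
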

\begin{proof}
We will show that, given a matroid $M$ and a subset $A$ of $E(M)$, if $D=(E,\mathcal{F})=M*A$ and $D'$ is a minor of $D$, then $D'=M'*A'$ for some minor $M'$ of $M$ and some subset $A'$ of $E(M')$.
If $e\notin A$ then $D\setminus e = (M*A)\setminus e = (M\setminus e) *A$ and
$D/e = (M*A)/e = (M/e) *A$. On the other hand, if $e\in A$ then
$D\setminus e = (M*A)\setminus e= (M/e)*(A-e)$ and
$D/ e = (M*A)/ e= (M\setminus e)*(A-e)$.
\end{proof}

An excluded minor characterisation of partial duals of plane graphs appeared in~\cite{Moprep}.
It was shown there that a ribbon graph $G$ is a partial dual of a plane graph if and only if it contains no $G_0$-, $G_1$- or $G_2$-minor, where $G_0$ is the non-orientable ribbon graph on one vertex and one edge; $G_1$ is the orientable ribbon graph given by vertex set $\{1,2\}$, edge set $\{a,b,c\}$ with the incident edges at each vertex having the cyclic order $abc$, with respect to some orientation of $G_1$; and  $G_2$ is the orientable ribbon graph given by vertex set $\{1\}$, edge set $\{a,b,c\}$ with the cyclic order $abcabc$ at the vertex.
(The result in~\cite{Moprep} was stated for the class of ribbon graphs that present link diagrams, but this coincides with the class of partial duals of plane graphs.)

 For the delta-matroid analogue of the ribbon graph result set:
 \begin{itemize}
\item $X_0:=D(G_0)=(\{a\} ,\{ \emptyset, \{a\}\} )$;
\item $X_1:=D(G_1)=(\{a,b,c\},\{\{a\},\{b\},\{c\},\{a,b,c\}\})$;
\item $X_2:=D(G_2)=(\{a,b,c\},\{\{a,b\},\{a,c\},\{b,c\},\emptyset\})$.
\end{itemize}
Note that every twist of $X_0$ is isomorphic to $X_0$ and that every twist of $X_1$ or $X_2$ is isomorphic to either $X_1$ or $X_2$.
In particular, $X_1=X_2^*$.

Then translating the ribbon graph result into delta-matroids suggests that $X_0$, $X_1$ and $X_2$ should form the set of excluded minors for the class of delta-matroids that are twists of matroids. Previously Duchamp~\cite{adfund} had shown, but not explicitly stated, that $X_1$ and $X_2$ are the excluded minors for the class of even delta-matroids that are twists of matroids.

\begin{theorem}[(Duchamp~\cite{adfund})]
\label{expdm}
A delta-matroid $D=(E,\mathcal{F})$ is the twist of a matroid if and only if it does not have a minor isomorphic to $X_0$, $X_1$, or $X_2$.
\end{theorem}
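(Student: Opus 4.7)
I would combine Theorem~\ref{pdminors} (minor-closedness) with the observation that none of $X_0$, $X_1$, $X_2$ is itself a twist of a matroid. This reduces to direct inspection, which is essentially done in the discussion just before the statement: every twist of $X_0$ is isomorphic to $X_0$, and every twist of $X_1$ or $X_2$ is isomorphic to $X_1$ or $X_2$; since none of these three has equicardinal feasible sets, none is a matroid, and hence none is a twist of a matroid.

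\textbf{Backward direction.} The strategy is to reduce to the even case and then invoke Duchamp's result~\cite{adfund}, which tells us that $X_1$ and $X_2$ are the excluded minors for even delta-matroids that are twists of matroids. For this reduction the key lemma is that a delta-matroid with no $X_0$ minor must be even. I would prove this contrapositively: given feasible sets $F_1, F_2$ of opposite parities, I first reduce to the case $|F_1 \btu F_2|=1$. Since $|F_1\btu F_2|$ is odd and nonempty, pick $u\in F_1\btu F_2$; the symmetric exchange axiom yields $v\in F_1\btu F_2$ with $F_1\btu\{u,v\}\in\mathcal{F}$. If $v=u$ (allowed by Axiom~\ref{sea}), then $F_1$ and $F_1\btu\{u\}$ are feasible sets of opposite parities at symmetric distance $1$ and we are done. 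Otherwise $F_3:=F_1\btu\{u,v\}$ has the same parity as $F_1$ but $|F_3\btu F_2|=|F_1\btu F_2|-2$, and induction on $|F_1\btu F_2|$ completes the reduction.

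Now, with feasible $F$ and $F'=F\btu\{e\}$ in hand (say $e\notin F$), I would extract $X_0$ as a minor by contracting in turn each element of $F$ and then deleting in turn each element of $E\setminus(F\cup\{e\})$. Each contraction is legitimate because the element being contracted lies in the current image of $F$ and hence is not a loop; each subsequent deletion is legitimate because $\emptyset$ is by then a feasible set, so nothing is a coloop. At each step the images of $F$ and $F'$ remain feasible and continue to differ by $\{e\}$, so the final minor has ground set $\{e\}$ and feasible sets $\{\emptyset,\{e\}\}$, which is exactly $X_0$.

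\textbf{Where the difficulty lies.} The substantive new content is the SEA reduction and the minor extraction sketched above, which together show ``no $X_0$ minor $\Rightarrow$ even''. The $v=u$ case of Axiom~\ref{sea} is what makes the distance-reduction step work cleanly, and careful bookkeeping is required to ensure that the intermediate delta-matroids remain proper (guaranteed by the persistence of the two target feasible sets) and that no contraction lands on a loop or deletion on a coloop. Once the even case is reached, the theorem is finished by citing Duchamp's characterisation, so no further combinatorial work is needed in that regime.
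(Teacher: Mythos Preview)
Your proposal is correct and follows essentially the same approach as the paper: both arguments reduce to the even case via the fact that $X_0$ is the unique excluded minor for even delta-matroids, and then invoke Duchamp's result~\cite{adfund} for $X_1$ and $X_2$. The paper simply asserts the $X_0$ fact as ``obvious'' (together with the observation that twists of matroids are even), whereas you supply a full proof of it via the SEA distance-reduction and explicit minor extraction; this added detail is correct and welcome, but the overall strategy is the same.
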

\begin{proof}
Take a matroid $M$ and a subset $A$ of $E(M)$.
As $|B|=r(M)$ for each $B\in\mathcal{B}(M)$, we know that the sizes of the feasible sets of $M*A$ will have even parity if $r(M)$ and $A$ have the same parity, otherwise they will all have odd parity.
Thus $M*A$ is an even delta-matroid, and $X_0$ is obviously the unique excluded minor for the class of even delta-matroids.
An application of~\cite[Propositions~1.1 and 1.5]{adfund} then gives that $X_0$, $X_1$, $X_2$ is the complete list of the excluded minors of twists of matroids.
\end{proof}

We now look for a rough structure theorem for delta-matroids that are twists of matroids. Again we proceed constructively via ribbon graph theory, starting with a rough structure theorem for the class of ribbon graphs that are partial duals of plane graphs, translating it into the language of delta-matroids, then giving a proof of the delta-matroid result.

A vertex $v$ of a ribbon graph $G$ is a \emph{separating vertex} if there are non-trivial ribbon subgraphs $P$ and $Q$ of $G$ such that $(V(G),E(G))=(V(P)\cup V(Q),E(P)\cup E(Q))$, and $E(P)\cap E(Q)=\emptyset$ and $V(P)\cap V(Q)=\{v\}$. In this case we write $G=P\oplus Q$.
Let $A\subseteq E(G)$. Then we say that $A$ defines a \emph{plane-biseparation} of $G$ if all of the components of $G\ba A$ and $G\ba (E(G)-A)$ are plane and every vertex of $G$ that is incident with edges in $A$ and edges in $E(G)-A$ is a separating vertex of $G$. \begin{example}
For the ribbon graph $G$ of Figure~\ref{f1a}, $v$ is a separating vertex with $P$ and $Q$ the subgraphs induced by edges $1, \ldots, 5$ and by $6,7,8$. $G$ admits plane-biseparations. The edge sets $\{1,6,7\}$, $\{2,6,7\}$, $\{2,3,4,5,8\}$, and $\{1,3,4,5,8\}$ are exactly those that define plane-biseparations. $G^A$ is plane if and only if $A$ is one of these four sets.
\end{example}
 In~\cite{Mo5}, the following rough structure theorem was given:
\begin{theorem}[(Moffatt~\cite{Mo5})]\label{rgppd}
Let $G$ be a ribbon graph. Then the partial dual $G^A$ is a plane graph if and only if $A$ defines a plane-biseparation of $G$.
\end{theorem}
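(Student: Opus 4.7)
The plan is to prove both directions, reducing to the case of connected $G$ at the outset (the disconnected case is handled componentwise, since both partial dualization and the plane-biseparation condition respect connected components).

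For the forward direction, assume $A$ defines a plane-biseparation of $G$. The crucial observation is that partial dualization is local with respect to one-vertex sums: if $G = P \oplus Q$ at a separating vertex and $B \subseteq E(G)$, then $G^B = P^{B \cap E(P)} \oplus Q^{B \cap E(Q)}$. Iteratively splitting $G$ at each separating vertex incident to both $A$- and $(E-A)$-edges (which is available by the plane-biseparation condition) decomposes $G$ as a sequence of one-vertex sums $G = G_1 \oplus \cdots \oplus G_k$, where each $G_i$ satisfies $E(G_i) \subseteq A$ or $E(G_i) \subseteq E-A$. Each $G_i$ is built via one-vertex sums from components of $G \ba (E-A)$ or of $G \ba A$, which are plane by hypothesis, so $G_i$ is plane. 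Setting $A_i := A \cap E(G_i) \in \{\emptyset, E(G_i)\}$, each $G_i^{A_i}$ is either $G_i$ itself or its geometric dual $G_i^\ast$, both plane. Since one-vertex sums of plane ribbon graphs are plane, $G^A = G_1^{A_1} \oplus \cdots \oplus G_k^{A_k}$ is plane.

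For the backward direction, set $H := G^A$, so $H$ is plane and $G = H^A$. The two ``plane components'' conditions follow from standard identities relating partial dualization to restriction: the spanning subgraph of $G$ on $A$ equals the geometric dual of the spanning subgraph of $H$ on $A$, which is plane as the geometric dual of a spanning subgraph of the plane ribbon graph $H$; and the spanning subgraph of $G$ on $E-A$ is obtained from the spanning subgraph of $H$ on $E-A$ by vertex-splitting, which cannot increase genus, so its components are plane as well. The delicate remaining condition is that every vertex $v$ of $G$ incident to both $A$- and $(E-A)$-edges is a separating vertex of $G$. I would identify $v$ with its corresponding boundary component $B_v$ of the spanning $A$-subsurface $(V(H),A)$ in $H$; since $H$ is plane, $B_v$ is a simple closed curve on the sphere and bounds two discs, and the edges of $G$ at $v$ arise from arcs on $B_v$ and inherit a natural partition by which of the two discs their ribbon lies in.

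The main obstacle is precisely the last step: translating the topological separation across $B_v$ in the sphere into a combinatorial vertex-separation of $G$. This requires carefully tracking how arcs on $B_v$ become half-edge attachments at $v$ in $G$ and verifying that the planarity of $H$ forces the two sides of $B_v$ to correspond to ribbon subgraphs of $G$ meeting only at $v$, with no mixing of the two groups, so that $v$ is genuinely separating in the sense $G = P \oplus Q$.
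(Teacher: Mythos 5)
The paper does not actually prove Theorem~\ref{rgppd}: it is quoted from~\cite{Mo5} as a known result, so there is no internal proof to compare yours against, and I assess your argument on its own terms. Your forward direction rests on a false identity. The claim that $G^B=P^{B\cap E(P)}\oplus Q^{B\cap E(Q)}$ whenever $G=P\oplus Q$, and the companion claim that one-vertex sums of plane ribbon graphs are plane, hold for the \emph{join} $P\vee Q$ (no cycle of $P$ interlaced with a cycle of $Q$), but not for the bare one-vertex sum $\oplus$ that appears in the definition of a plane-biseparation. Take $G$ to be the orientable ribbon graph with one vertex $v$ and two interlaced loops $a,b$, and let $P,Q$ be the single-loop subgraphs, so $G=P\oplus Q$ with $P,Q$ plane while $G$ is a torus; moreover $G^{\{a,b\}}=G^*$ is again a torus, whereas $P^{\{a\}}\oplus Q^{\{b\}}$ is a plane path, so your identity gets even planarity wrong. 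This cannot be repaired by choosing the decomposition more carefully: with $A=\{a\}$ the same $G$ satisfies the plane-biseparation hypotheses ($G\ba a$ and $G\ba b$ are plane annuli and $v$ is a separating vertex), yet every decomposition separating $a$ from $b$ leaves the two pieces interlaced at $v$, so no join decomposition aligned with $A$ exists. (Here $G^{\{a\}}$ is a plane $2$-cycle while $P^{\{a\}}\oplus Q$ is an edge with a pendant loop: your induction tracks the wrong ribbon graph even when its planarity conclusion happens to be correct.) A correct proof of the forward direction must confront interlacement at separating vertices, for instance via the counts $v(G^A)=f\bigl((V,A)\bigr)$ and $f(G^A)=f\bigl((V,A^c)\bigr)$, which reduce planarity of $G^A$ to the identity $f\bigl((V,A)\bigr)+f\bigl((V,A^c)\bigr)=e(G)+2k(G)$.

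Your backward direction is sounder in outline: the identities $G\ba A^c=(G^A\ba A^c)^*$ and $G\ba A=G^A/A$ do yield planarity of the components of $G\ba A^c$ and $G\ba A$, and reading the separating-vertex condition off the two sides of the boundary curve $B_v$ in the sphere is a viable strategy. But you explicitly stop at the decisive step --- verifying that no vertex or edge of $G$ other than $v$ meets both sides of $B_v$ --- so this half is a programme rather than a proof. As written, the argument establishes neither implication.
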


Thus we need to translate a plane-biseparation into the language of delta-matorids. Since, by Theorem~\ref{t.compat} a ribbon graph $G$ is plane if and only if $D=D(G)$ is a matroid, the requirement that $G\ba A$ and $G\ba A^c$ are plane translates to $D\ba A$ and $D\ba A^c$ being matroids. For the analogue of separability we make the following definition.
A delta-matroid is \emph{separable} if its lower matroid is disconnected.

For a ribbon graph $G$ and non-trivial ribbon subgraphs $P$ and $Q$ of $G$, we write $G=P\sqcup Q$ when $G=P\cup Q$ and $P\cap Q=\emptyset$.
If there are non-trivial ribbon subgraphs $P$ and $Q$ of $G$ and a vertex $v$ of $G$ such that $G=P\cup Q$ and $P\cap Q=\{v\}$ then we write $G=P\oplus Q$.
It was shown in~\cite{CMNR} that
if $G$ is a ribbon graph, then $D(G)$ is separable if and only if there exist ribbon graphs $G_1$ and $G_2$ such that $G=G_1 \sqcup G_2$ or $G = G_1 \oplus G_2$. Thus the condition that every vertex of $G$ that is incident with both edges in $A$ and edges in $A^c$ is a separating vertex of $G$ becomes that $A$ is separating in $D_{\min}$.
So Theorem~\ref{rgppd} may be translated to delta-matroids as follows.
\begin{theorem}
\label{charaterise_delta_matroids_twists_matroids}
Let $D$ be a delta-matroid and $A$ a non-empty proper subset of $E(D)$. Then $D*A$ is a matroid if and only if the following two conditions hold:
\begin{enumerate}
\item\label{charaterise_delta_matroids_twists_matroids.1} $A$ is separating in $D_{\min}$, and
\item\label{charaterise_delta_matroids_twists_matroids.2} $D\setminus A$ and $D\setminus A^c$ are both matroids.
\end{enumerate}
\end{theorem}
We need some preliminary results before we can prove this theorem. In the case that $D$ is the twist of a matroid, we can describe the upper and lower matroids of $D$ precisely.
\begin{lemma}\label{l.mat1}
Let $M=(E,\mathcal{B})$ be a matroid and $A$ be a subset of $E$. Let $D=M*A$. Then $D_{\min}=M/A\oplus (M\ba A^c)^*$.
\end{lemma}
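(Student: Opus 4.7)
The plan is to compute the minimum-size feasible sets of $D=M*A$ explicitly and recognise them as unions of bases of $M/A$ (on ground set $A^c$) and bases of $(M\ba A^c)^*$ (on ground set $A$).

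First I would write every feasible set of $D$ in the form $A\btu B$ with $B\in\mathcal{B}(M)$. Since bases of $M$ are equicardinal of size $r(M)$, the elementary identity $|A\btu B|=|A|+|B|-2|A\cap B|=|A|+r(M)-2|A\cap B|$ shows that minimising $|A\btu B|$ over $B\in\mathcal{B}(M)$ is equivalent to maximising $|A\cap B|$, and the maximum value is exactly $r_M(A)$ by definition of matroid rank. So $\mathcal{F}_{\min}(D)=\{A\btu B:B\in\mathcal{B}(M),\ |A\cap B|=r_M(A)\}$.

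Next I would invoke the standard matroid fact that a basis $B$ of $M$ satisfies $|B\cap A|=r_M(A)$ if and only if $B$ decomposes as $B=B_1\cup B_2$ with $B_1\subseteq A$, $B_2\subseteq A^c$, $B_1\in\mathcal{B}(M|A)$, and $B_2\in\mathcal{B}(M/A)$. (This is immediate from the fact that $r(M)=r_M(A)+r_M(E)-r_M(A)=r(M|A)+r(M/A)$ realised by such a $B$.) For such a $B$, writing $A\btu B=(A\setminus B_1)\cup B_2$ and noting $M|A=M\ba A^c$, the set $A\setminus B_1$ is precisely a basis of $(M\ba A^c)^*$. Conversely every basis of $(M\ba A^c)^*$ arises this way.

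Thus $\mathcal{F}_{\min}(D)$ equals
\[\{C\cup B_2: C\in\mathcal{B}((M\ba A^c)^*),\ B_2\in\mathcal{B}(M/A)\},\]
which is exactly the collection of bases of the direct sum $(M\ba A^c)^*\oplus M/A$. Since the ground sets $A$ and $A^c$ are disjoint, the direct sum is well-defined on $E$, and we obtain $D_{\min}=M/A\oplus(M\ba A^c)^*$. The only subtle point is the bijective correspondence between bases of $M$ of maximal intersection with $A$ and pairs $(B_1,B_2)$ of bases of $M|A$ and $M/A$; this is the main place where the argument uses genuine matroid structure rather than just counting, but it is a well-known consequence of submodularity of the rank function.
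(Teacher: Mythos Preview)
Your proof is correct and follows essentially the same approach as the paper's: both identify the minimum-size feasible sets of $M\ast A$ as those arising from bases $B$ of $M$ with $|B\cap A|$ maximal, then decompose such a $B$ into a basis of $M|A=M\ba A^c$ together with a basis of $M/A$, and read off the symmetric difference. Your write-up is somewhat more explicit about the cardinality computation and the bijection between such bases and pairs $(B_1,B_2)\in\mathcal{B}(M|A)\times\mathcal{B}(M/A)$, but the underlying argument is the same.
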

\begin{proof}
Since we restrict our attention to the smallest sets of the form $B\bigtriangleup A$, where $B\in \mathcal{B}$, we need only to consider those bases of $M$ that share the largest intersection with $A$.
That is, we think of building a basis for $M$ by first finding a basis of $M\ba A^c$ and then extending that independent set to a basis of $M$.
Let $I_A$ be a basis of $M\ba A^c$ and let $B_A$ be a basis of $M$ such that $I_A\subseteq B_A$.
Then $A\bigtriangleup B_A=(B_A-I_A)\cup (A-I_A)$.
Now, $B_A-I_A$ is a basis in $M/A$ and $A-I_A$ is the complement of a basis in $M\ba A^c$, so $B_A\bigtriangleup A$ is a basis of $M/A\oplus (M\ba A^c)^*$.
Thus every basis of $D_{\min}$ can be constructed in this way. On the other hand, if $B'$ is a basis of $M/A \oplus (M\setminus A^c)^*$, then $B'=B_1 \cup B_2$ where $B_1$ is a basis of $M/A$ and $B_2$ is the complement of a basis of $M\setminus A^c$. Thus $B' \btu A= B_1 \cup (A-B_2)$ is a basis of $M$, so $B'$ is a feasible set of $D$. As all bases of $M/A \oplus (M\ba A^c)^*$ are equicardinal and each basis of $D_{\min}$ is a basis of $M/A \oplus (M\ba A^c)^*$, $|B'|=r(D_{\min})$, so $B'$ is a basis of $D_{\min}$.
\end{proof}

\begin{lemma}
\label{l.mat2}
Let $M=(E,\mathcal{B})$ be a matroid and let $A$ be a subset of $E$. Let $D=M*A$. Then $D_{\max}=M \setminus A \oplus (M/A^c)^*$.
\end{lemma}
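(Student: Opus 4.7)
The plan is to derive this lemma from Lemma~\ref{l.mat1} by exploiting delta-matroid duality, rather than redoing the bijective basis-building argument. The key observation is that passing to the dual swaps the roles of $\mathcal{F}_{\min}$ and $\mathcal{F}_{\max}$, and it also swaps the roles of $A$ and $A^c$ in the twist description.

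First I would record two elementary identities. For the twist, since $(D*X)*Y = D*(X\btu Y)$, we have
\[D^* = (M*A)*E = M*(A\btu E) = M*A^c.\]
For the extrema of feasible sets, recall that $\mathcal{F}(D^*) = \{E-F : F\in\mathcal{F}(D)\}$, so complementation in $E$ is a size-reversing bijection between $\mathcal{F}(D)$ and $\mathcal{F}(D^*)$. Consequently the maximum-sized feasible sets of $D$ correspond to the minimum-sized feasible sets of $D^*$, which translates into the matroid identity
\[(D_{\max})^* = (D^*)_{\min}.\]

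Now I would apply Lemma~\ref{l.mat1} to the matroid $M$ twisted by $A^c$, which by the first identity describes $D^*$. This yields
\[(D^*)_{\min} = M/A^c \oplus (M\setminus A)^*.\]
Combining with the second identity gives $(D_{\max})^* = M/A^c \oplus (M\setminus A)^*$. Taking matroid duals of both sides, using that matroid duality is an involution, distributes over direct sums, and satisfies $(N^*)^* = N$, produces
\[D_{\max} = (M/A^c)^* \oplus (M\setminus A) = M\setminus A \oplus (M/A^c)^*,\]
which is the required identity.

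The only thing that needs care is the bookkeeping around $(D_{\max})^*=(D^*)_{\min}$: strictly speaking $D_{\max}$ is a matroid on $E$, not a matroid on the smaller ground set of a minor, so the matroid-dual on the left is taken in the ambient ground set $E$, and this is exactly what the complementation bijection $F\mapsto E-F$ delivers. Once that identification is made, no further calculation is needed and the proof is essentially a two-line consequence of Lemma~\ref{l.mat1}.
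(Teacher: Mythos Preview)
Your proof is correct and follows essentially the same strategy as the paper: use the identity $(D_{\max})^* = (D^*)_{\min}$, express $D^*$ as a twist of a matroid, invoke Lemma~\ref{l.mat1}, and dualise back. The only cosmetic difference is that the paper writes $D^* = (M*E)*A = M^* * A$ and applies Lemma~\ref{l.mat1} to $M^*$ with the set $A$ (then using $M^*/A = (M\setminus A)^*$ and $M^*\setminus A^c = (M/A^c)^*$), whereas you write $D^* = M*A^c$ and apply Lemma~\ref{l.mat1} to $M$ with the set $A^c$; your route is marginally tidier since it avoids the extra matroid-duality identities, but the two arguments are the same in substance.
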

\begin{proof}
As the feasible sets of $(D_{\max})^*$ are just the feasible sets of $(D^*)_{\min}$, we deduce that $D_{\max}=((D^*)_{\min})^*$.
Now $(D^*)_{\min}=((M*A)*E)_{\min}=((M*E)*A)_{\min}=(M^**A)_{\min}$.
Lemma~\ref{l.mat1} implies that $(M^**A)_{\min}=M^*/A\oplus (M^*\ba A^c)^*$.
Then
\[M^*/A\oplus (M^*\ba A^c)^*=(M\ba A)^*\oplus ((M/A^c)^*)^*=(M\ba A)^*\oplus (M/A^c) .\]
We deduce that $(D^*)_{\min}=(M\ba A)^*\oplus (M/A^c)$, thus $((D^*)_{\min})^*=((M\ba A)^*\oplus (M/A^c))^*$.
This last direct sum, by~\cite[Proposition 4.2.21]{Oxley11}, is equal to $M\ba A\oplus (M/A^c)^*$.
\end{proof}

The next two corollaries follow immediately from Lemma~\ref{l.mat1} and Lemma~\ref{l.mat2} and the fact that, for a given field $F$, the class of matroids representable over $F$ is closed under taking minors, duals, and direct sums.

\begin{corollary}
\label{representability}
Let $\mathcal{C}$ be a class of matroids that is closed under taking minors, duals, and direct sums. If $M\in \mathcal{C}$ and $A\subseteq E(M)$, then $(M*A)_{\min}\in \mathcal{C}$ and $(M*A)_{\max}\in \mathcal{C}$.
\end{corollary}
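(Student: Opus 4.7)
The plan is to deduce this corollary directly from the explicit formulas for $D_{\min}$ and $D_{\max}$ established in Lemmas~\ref{l.mat1} and~\ref{l.mat2}, observing that each matroid appearing in those formulas is built from $M$ by a sequence of operations under which $\mathcal{C}$ is closed.

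First, I would apply Lemma~\ref{l.mat1} to write
\[
(M*A)_{\min} \;=\; (M/A)\oplus (M\ba A^c)^*.
\]
Now $M/A$ is a minor of $M$, so it lies in $\mathcal{C}$. Similarly, $M\ba A^c$ is a minor of $M$ and hence in $\mathcal{C}$; since $\mathcal{C}$ is closed under duals, $(M\ba A^c)^*\in\mathcal{C}$. Finally, $\mathcal{C}$ is closed under direct sums, so the direct sum $(M/A)\oplus (M\ba A^c)^*$ lies in $\mathcal{C}$, giving $(M*A)_{\min}\in\mathcal{C}$.

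Next, the same reasoning applied to Lemma~\ref{l.mat2} yields
\[
(M*A)_{\max} \;=\; (M\ba A)\oplus (M/A^c)^*,
\]
and each summand is obtained from $M$ by taking a minor and possibly a dual, so both summands belong to $\mathcal{C}$; closure under direct sums then places $(M*A)_{\max}$ in $\mathcal{C}$ as well.

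There is essentially no obstacle here — the work has already been done in Lemmas~\ref{l.mat1} and~\ref{l.mat2}, and the corollary is simply a matter of reading off which matroid operations appear and invoking the three closure hypotheses on $\mathcal{C}$. The only thing worth being careful about is that one uses the right combination: minor (to pass from $M$ to $M/A$, $M\ba A$, $M\ba A^c$, $M/A^c$), dual (to convert one of the summands), and direct sum (to combine them).
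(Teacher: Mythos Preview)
Your proof is correct and follows exactly the approach indicated in the paper: the corollary is stated to follow immediately from Lemmas~\ref{l.mat1} and~\ref{l.mat2} together with the closure of $\mathcal{C}$ under minors, duals, and direct sums. There is nothing to add.
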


\begin{corollary}
\label{representability2}
Given a matroid $M$ and subset $A$ of $E(M)$, both $(M*A)_{\min}$ and $(M*A)_{\max}$ are representable over any field that $M$ is.
\end{corollary}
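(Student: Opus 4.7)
The plan is to obtain Corollary~\ref{representability2} as an immediate consequence of Lemmas~\ref{l.mat1} and~\ref{l.mat2}, combined with the standard closure properties of the class of $F$-representable matroids. Indeed, this corollary is really just Corollary~\ref{representability} specialised to the well-known minor-, dual-, and direct-sum-closed class $\mathcal{C}_F$ of matroids representable over a fixed field $F$; essentially all of the work has already been done in establishing the two lemmas.

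First I would apply Lemma~\ref{l.mat1} to write
\[
(M*A)_{\min} \;=\; M/A \,\oplus\, (M\setminus A^c)^*,
\]
and Lemma~\ref{l.mat2} to write
\[
(M*A)_{\max} \;=\; M\setminus A \,\oplus\, (M/A^c)^*.
\]
Next I would note that $M/A$, $M\setminus A$, $M/A^c$, and $M\setminus A^c$ are all minors of $M$, and that the operations of taking minors, taking duals, and forming direct sums each preserve representability over any fixed field $F$ (this is a standard fact; see, e.g., the relevant chapters of Oxley). Hence if $M$ is representable over $F$, then each summand on the right-hand side of the two displayed equations is representable over $F$, and therefore so is the direct sum.

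There is really no obstacle here; the only thing to be careful about is just citing the correct closure properties of $\mathcal{C}_F$ and making sure the lemmas have been stated in exactly the form needed. A cleaner way to package the argument is simply to invoke Corollary~\ref{representability} with $\mathcal{C}=\mathcal{C}_F$, since $\mathcal{C}_F$ satisfies the hypotheses of that corollary, thereby avoiding any redundant verification.
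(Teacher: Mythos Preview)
Your proposal is correct and matches the paper's approach exactly: the paper likewise derives the corollary immediately from Lemmas~\ref{l.mat1} and~\ref{l.mat2} together with the closure of $F$-representable matroids under minors, duals, and direct sums. Your observation that this is just Corollary~\ref{representability} specialised to $\mathcal{C}_F$ is precisely the intended reading.
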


The proof of the following lemma can be found in~\cite{CMNR}.
\begin{lemma}\label{lem:useful}
Let $D=(E,\mathcal{F})$ be a delta-matroid, let $A$ be a subset of $E$ and let $s_0 = \min\{|B \cap A| : B\in\mathcal {B}(D_{\min})\}$. Then for any $F\in\mathcal{F}$ we have $|F\cap A|\geq s_0$.
\end{lemma}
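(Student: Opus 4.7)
The plan is to prove this by contradiction, using the Symmetric Exchange Axiom applied to a feasible set $F$ and a cleverly-chosen basis of $D_{\min}$ to derive an impossibility.

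Suppose, for contradiction, that there exists $F \in \mathcal{F}$ with $|F \cap A| < s_0$. Among all bases of $D_{\min}$ (equivalently, feasible sets of minimum cardinality), pick one, call it $B$, that minimises $|F \btu B|$. The first observation is that $B \cap A \not\subseteq F$: indeed $|B \cap A| \ge s_0 > |F \cap A|$, so $B \cap A$ cannot be contained in $F \cap A$, and since every element of $B \cap A$ lies in $A$, this furnishes some element $u \in (B \cap A) \setminus F$. In particular $u \in F \btu B$.

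Now apply the Symmetric Exchange Axiom to the pair $B, F \in \mathcal{F}$ with the distinguished element $u$: there exists $v \in F \btu B$ such that $B' := B \btu \{u,v\}$ is feasible. The rest is a short case analysis on $v$. If $v = u$, then $B' = B \setminus \{u\}$ is a feasible set of cardinality $|B|-1$, contradicting the fact that $|B|$ is the minimum size of a feasible set of $D$. If $v \neq u$ and $v \in B \setminus F$, then $B' = B \setminus \{u,v\}$ is feasible of cardinality $|B|-2$, again contradicting minimality of $|B|$. The remaining case is $v \in F \setminus B$; then $B' = (B \setminus \{u\}) \cup \{v\}$ has the same cardinality as $B$, so $B' \in \mathcal{B}(D_{\min})$. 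But now, since both $u$ and $v$ lie in $F \btu B$, we have $F \btu B' = (F \btu B) \btu \{u,v\} = (F \btu B) \setminus \{u,v\}$, so $|F \btu B'| = |F \btu B| - 2$, contradicting the choice of $B$ as minimising $|F \btu B|$ among bases of $D_{\min}$. All cases yield a contradiction, so no such $F$ can exist, completing the proof.

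The only step that requires a moment of care is ensuring that $(B \cap A) \setminus F$ is nonempty, which uses the strict inequality $|B \cap A| > |F \cap A|$ together with the fact that both sets lie in $A$. After that, the argument is driven entirely by the observation that every outcome of the symmetric exchange either shrinks $B$ (impossible by minimum-cardinality of $B$) or produces a new minimum-cardinality feasible set strictly closer to $F$ (impossible by minimality of $|F \btu B|$). I do not expect any serious obstacle; the potential subtlety is simply being careful that the axiom is quoted correctly with the convention that $v = u$ is allowed and yields $B \btu \{u\} = B \setminus \{u\}$.
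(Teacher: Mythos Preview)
Your proof is correct. The paper does not actually prove this lemma; it simply states that the proof ``can be found in~\cite{CMNR}'', so there is no argument here to compare against. Your approach---choosing a minimum-cardinality feasible set $B$ minimising $|F\btu B|$, extracting $u\in (B\cap A)\setminus F$ from the strict inequality $|B\cap A|>|F\cap A|$, and then exhausting the three possibilities for $v$ in the Symmetric Exchange Axiom---is a clean, self-contained argument using nothing beyond the axiom and the definition of $D_{\min}$.
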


We can now prove Theorem~\ref{charaterise_delta_matroids_twists_matroids}.
\begin{proof}[ of Theorem~\ref{charaterise_delta_matroids_twists_matroids}]
Suppose first that $M:=D*A$ is a matroid. Then $D=M*A$ and
Lemma~\ref{l.mat1} shows that
$A$ is separating in $D_{\min}$.
The feasible sets of $D\setminus A$ are given by
\[ \mathcal {F}(D\setminus A) = \{ F- A: F\in\mathcal{F}(D),\ |F\cap A|\leq |F'\cap A| \text{ for all $F'\in \mathcal{F}(D)$}\}.\]
So the feasible sets of $D\setminus A$ are obtained by deleting the elements in $A$ from those feasible sets of $D$ having smallest possible intersection with $A$. As $D=M*A$, we obtain
\[ \mathcal {F}(D\setminus A) = \{ B \cap A^c: B\in\mathcal{B}(M),\ |B\cap A|\geq |B'\cap A| \text{ for all $B'\in \mathcal{B}(M)$}\}.\]
Because all bases of $M$ have the same number of elements, we see that all feasible sets of $D\setminus A$ also have the same number of elements and consequently $D\setminus A$ is a matroid. Similarly the feasible sets of $D\setminus A^c$ form a matroid.

We now prove the converse.
Let $r= r_{D_{\min}}(A)$ and $r'= r_{D_{\min}}(A^c)$.
We will show that any feasible set $F$ of $D$ satisfies $|F\cap A| - |F\cap A^c| = r-r'$. This condition implies that all feasible sets of $D*A$ have the same size which is enough to deduce that $D*A$ is a matroid, as required.

Because $A$ is separating in $D_{\min}$, any $F_0$ in $\mathcal {F}_{\min}$
must satisfy $|F_0\cap A|=r$ and $|F_0\cap A^c|=r'$.
Now Lemma~\ref{lem:useful} implies that any feasible set $F$ of $D$ satisfies $|F\cap A| \geq r$ and $|F\cap A^c| \geq r'$. We claim that a feasible set $F$ satisfies $|F\cap A| = r$ if and only if $|F\cap A^c|=r'$. The feasible sets of $D\setminus A$ are given by
\[ \mathcal {F}(D\setminus A) = \{ F- A: F\in\mathcal{F}(D),\ |F\cap A|=r\}.\]
Following condition \ref{charaterise_delta_matroids_twists_matroids.2}, these sets form the bases of a matroid and consequently all have the same size, which must be $r'$. Therefore if $F$ is in $\mathcal F(D)$ and satisfies $|F\cap A|=r$, then $|F\cap A^c|=r'$. The converse is similar and so our claim is established.

We will now prove by induction on $k$ that if $F$ is a feasible set then $|F\cap A|=r+k$ if and only if $|F\cap A^c|=r'+k$. We have already established the base case when $k=0$. Suppose the claim is true for all $k<l$.
If $F$ is a feasible set satisfying $|F\cap A|=r+l$, then using induction, we see that $|F\cap A^c|\geq r'+l$.
Suppose then there is a feasible set $F$ satisfying $|F\cap A|=r+l$ and $|F\cap A^c|>r'+l$. Let $F_1$ be a member of $\mathcal{F}_{\min}$. So
$|F_1 \cap A|=r$ and $|F_1\cap A^c|=r'$. Now choose $F_2$ to be a feasible set with $|F_2\cap A|=r+l$, $|F_2\cap A^c|>r'+l$ and $|F_2 \cap F_1 \cap A|$ as large as possible amongst such sets. There exists $x\in (F_2- F_1)\cap A$ and clearly $x\in F_1\bigtriangleup F_2$. Hence there exists $y$ belonging to $F_1\bigtriangleup F_2$ such that $F_3 = F_2 \bigtriangleup \{x,y\}$ is feasible. However $y$ is chosen, we must have $|F_3\cap A| < |F_3\cap A^c|$. Therefore the inductive hypothesis ensures that $|F_3 \cap A| \geq |F_2 \cap A|$ and so $y\in F_1\cap A$. But then $F_3$ is a feasible set of $D$ with $|F_3\cap A|=r+l$, $|F_3\cap A'|>r'+l$ and $|F_3 \cap F_1 \cap A| > |F_2 \cap F_1 \cap A|$, contradicting the choice of $F_2$.
\end{proof}

We say that  a ribbon graph $G$ is the \emph{join} of $P$ and $Q$, written $G=P\vee Q$, if $P$ and $Q$ are two disjoint non-trivial ribbon graphs and $G$ can be obtained  by choosing an arc on the boundary of a vertex in $P$ that does not intersect an edge, and doing likewise in $Q$, then identifying the two arcs. The two vertices that were identified become a single vertex of $G$.

 It is easily deduced from Proposition~5.22 of~\cite{CMNR} that $D(G)$ is disconnected if and only if there exist ribbon graphs $G_1$ and $G_2$ such that $G=G_1 \sqcup G_2$ or $G = G_1 \vee G_2$.
In~\cite{Mo5}, it was shown that $G$ and $G^A$ are both plane graphs if and only if we can write $G=H_1\vee \cdots \vee H_l$, where each $H_i$ is plane and $A= \bigcup_{i\in I} E(H_i)$, for some $I\subseteq \{1, \ldots , l\}$.
This result extends to matroids as follows.
\begin{theorem}
\label{pdism}
Let $M=(E,\mathcal{B})$ be a matroid and $A$ be a subset of $E$.
Then $M*A$ is a matroid if and only if $A$ is separating or $A\in\{\emptyset,E\}$.
\end{theorem}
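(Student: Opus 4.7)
The plan is to derive Theorem~\ref{pdism} directly from Theorem~\ref{charaterise_delta_matroids_twists_matroids}, which already characterises exactly when a twist of a delta-matroid is a matroid. The only real work is to interpret that theorem when the delta-matroid in question happens to be a matroid; everything then collapses to the single condition about separability.

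First I would dispose of the two boundary cases, since they fall outside the hypothesis of Theorem~\ref{charaterise_delta_matroids_twists_matroids}. If $A=\emptyset$, then $M*A=M$ is trivially a matroid; if $A=E$, then $M*A=M^{*}$ is the dual matroid, hence also a matroid. Neither case requires any separability assumption, which accounts for the extra disjuncts in the statement.

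For the remaining case, assume $A$ is a non-empty proper subset of $E$ and view $M$ as a delta-matroid $D:=M$. Because all bases of $M$ have the same cardinality, we have $\mathcal{F}_{\min}(D)=\mathcal{B}(M)$ and hence $D_{\min}=M$, so the separability clause in Theorem~\ref{charaterise_delta_matroids_twists_matroids} reads simply ``$A$ is separating in $M$''. Moreover, for any matroid $M$ and any $X\subseteq E$, the delta-matroid deletion $M\setminus X$ coincides with the ordinary matroid deletion and is therefore automatically a matroid; in particular both $M\setminus A$ and $M\setminus A^{c}$ are matroids, so condition (2) of Theorem~\ref{charaterise_delta_matroids_twists_matroids} holds for free. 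The two conditions of that theorem therefore collapse to the single requirement that $A$ be separating in $M$, giving the biconditional.

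The main obstacle was already cleared in the proof of Theorem~\ref{charaterise_delta_matroids_twists_matroids}; the present statement is essentially its matroidal shadow, and the only care required is the explicit bookkeeping for the boundary values $A=\emptyset$ and $A=E$ that are excluded from the hypothesis of the general delta-matroid theorem.
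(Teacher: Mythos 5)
Your proof is correct, but it takes a genuinely different route from the paper's. You deduce the statement from Theorem~\ref{charaterise_delta_matroids_twists_matroids}: after disposing of the boundary cases $A\in\{\emptyset,E\}$ directly, you note that for a matroid $M$ one has $M_{\min}=M$ (all bases are equicardinal), and that delta-matroid deletion applied to a matroid always returns a matroid (the feasible sets of $M\setminus e$ are either the bases avoiding $e$ or, when $e$ is a coloop, the sets $B-e$, and are equicardinal either way), so condition~(2) of that theorem is automatically satisfied and the characterisation collapses to ``$A$ is separating in $M$''. All of these observations are sound, and since Theorem~\ref{charaterise_delta_matroids_twists_matroids} is proved earlier and without appeal to Theorem~\ref{pdism}, there is no circularity. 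The paper instead gives a short self-contained rank computation: the minimum and maximum cardinalities of sets in $\mathcal{B}\bigtriangleup A$ are $r(M)-r(A)+|A|-r(A)$ and $r(E-A)+|A|-(r(M)-r(E-A))$ respectively, and these coincide precisely when $r(M)=r(A)+r(E-A)$, i.e.\ when $A$ is separating or $A\in\{\emptyset,E\}$. Your approach buys economy and makes explicit that this result is the matroidal shadow of the general delta-matroid characterisation; the paper's approach is more elementary, avoids invoking the heavier machinery, and produces the rank identity $r(M)=r(A)+r(E-A)$ as a by-product.
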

\begin{proof}
The minimum-sized sets and maximum-sized sets in $\mathcal{B}\bigtriangleup A$ have size $r(M)-r(A)+|A|-r(A)$ and $r(E-A)+|A|-(r(M)-r(E-A))$, respectively.
The collection $\mathcal{B}\bigtriangleup A$ is the set of bases of a matroid if and only if they all have equal cardinality, or equivalently,
\[
r(M)-r(A)+|A|-r(A) = r(E-A)+|A|-(r(M)-r(E-A)).
\]
Simplifying yields $r(M)=r(A)+r(E-A)$, which occurs if and only if $A$ is separating or $A \in \{\emptyset, E\}$, ~\cite[Proposition 4.2.1]{Oxley11},
\end{proof}

We complete this section by generalizing a result of Welsh~\cite{We69} regarding the connection between Eulerian and bipartite binary matroids.
A graph is said to be \emph{Eulerian} if there is a closed walk that contains each of the edges of the graph exactly once, and a graph is \emph{bipartite} if there is a partition $(A,B)$ of the vertices such that no edge of the graph has both endpoints in $A$ or both in $B$.
A \emph{circuit} in a matroid is a minimal dependent set and a \emph{cocircuit} in a matroid is a minimal dependent set in its dual.
A matroid $M$ is said to be \emph{Eulerian} if there are disjoint circuits $C_1, \ldots , C_p$ in $M$ such that $E(M)=C_1\cup \cdots \cup C_p$. A matroid is said to be \emph{bipartite} if every circuit has even cardinality.

A standard result in graph theory is that a plane graph $G$ is Eulerian if and only if $G^*$ is bipartite. This result also holds for binary matroids.
\begin{theorem}[(Welsh~\cite{We69})]
\label{dominic}
A binary matroid is Eulerian if and only if its dual is bipartite.
\end{theorem}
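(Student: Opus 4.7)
The plan is to prove the equivalence through the standard orthogonality between cycles and cocycles in a binary matroid, which is the only delicate point since it relies on binarity.

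First I would recall the key properties of binary matroids that make the proof work. In a binary matroid $M$, a subset $X\subseteq E(M)$ is a disjoint union of circuits (equivalently, an element of the cycle space, i.e., a symmetric difference of circuits) if and only if $|X\cap C^*|$ is even for every cocircuit $C^*$ of $M$. This is the fundamental orthogonality relation between the cycle and cocycle spaces of a binary matroid; it is the only place where binarity is used, but it is essential — without it, disjoint unions of circuits need not coincide with sums of circuits, and the equivalence fails.

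Next I would translate both sides of the claimed equivalence into this framework. The matroid $M$ is Eulerian, by definition, if and only if $E(M)$ itself is a disjoint union of circuits, i.e., $E(M)$ lies in the cycle space of $M$. On the other side, $M^*$ is bipartite if and only if every circuit of $M^*$ has even cardinality, which is the same as saying every cocircuit of $M$ has even cardinality.

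Finally I would combine these two observations. By the orthogonality property, $E(M)$ lies in the cycle space of $M$ if and only if $|E(M)\cap C^*|$ is even for every cocircuit $C^*$ of $M$. But $E(M)\cap C^* = C^*$, so this condition is simply that $|C^*|$ is even for every cocircuit $C^*$, which is exactly the bipartiteness of $M^*$. Chaining the equivalences yields the theorem.

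The main obstacle is really just having the orthogonality lemma at hand; once it is invoked, the argument is essentially a two-line tautology. If one wanted to keep the proof self-contained, the lemma could be proved by working with a binary representation: write the circuits as the minimal supports of vectors in the row space of a parity-check matrix and the cocircuits as the minimal supports of vectors in the row space of a generator matrix, then observe that every vector in the cycle space is orthogonal over $\mathrm{GF}(2)$ to every cocircuit.
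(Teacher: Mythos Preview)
Your argument is correct and is essentially the standard proof of Welsh's theorem. Note, however, that the paper does not actually supply a proof of this statement: it is quoted as a known result with a citation to Welsh~\cite{We69}, and the paper only uses it (together with the notions of circuit and cocircuit spaces introduced just afterwards) as a tool in the subsequent results. So there is no proof in the paper to compare against; your write-up is exactly the kind of short justification one would give if asked to fill in the details, and the orthogonality fact you invoke is precisely the property of binary matroids the paper itself appeals to a few lines later.
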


Once again, by considering delta-matroids as a generalization of ribbon graphs, we can determine when the twist of a binary bipartite or Eulerian matroid is either bipartite or Eulerian.
In~\cite{HM11} it was shown that, if $G$ is a plane graph with edge set $E$ and $A\subseteq E$, then
\begin{enumerate}
\item $G^A$ is bipartite if and only if the components of $G\ba A^c$
and $G^*\ba A$ are Eulerian;
\item $G^A$ is Eulerian if and only if $G\ba A^c$ and $G^*\ba A$
are bipartite.
\end{enumerate}
Recalling that, when $G$ is plane, $D(G)$ is a matroid, and that $D(G^A)=D(G)\ast A$ suggests an extension of this result to twists of binary matroids, but first we need to introduce some terminology. The circuit space $\mathcal{C}(M)$ (respectively cocircuit space $\mathcal {C^*}(M)$) of a binary matroid $M=(E,\mathcal{B})$ comprises all subsets of $E$ which can be expressed as the disjoint union of circuits (respectively cocircuits). Both spaces include the empty set. It is not difficult to see that a subset of $E$ belongs to the circuit space (respectively cocircuit) space if and only if it has even intersection with every cocircuit (respectively circuit)~\cite{Oxley11}. The bicycle space $\mathcal{BI}(M)$ of $M$ is the intersection of the circuit and cocircuit spaces.
It is not difficult to show that for any binary matroid $M$, we have $\mathcal{C}(M/A)=\{C-A : C\in \mathcal{C}(M)\}$. Furthermore $\mathcal{C}(M^*) = \mathcal{C^*}(M)$ which implies that $\mathcal{BI}(M)=\mathcal{BI}(M^*)$.

\begin{theorem}
\label{bipartite}
Let $M=(E,\mathcal{B})$ be a binary matroid, $A$ be a subset of $E$ and $D=M*A$.
\begin{enumerate}
\item If $M$ is bipartite, then $D_{\min}$ is bipartite if and only if $A\in \mathcal{BI}(M)$.
\item If $M$ is Eulerian, then $D_{\min}$ is Eulerian if and only if $A\in \mathcal{BI}(M)$.
\end{enumerate}
\end{theorem}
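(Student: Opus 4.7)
The plan is to decompose $D_{\min}$ via Lemma~\ref{l.mat1} as $D_{\min}=M/A\oplus (M\setminus A^c)^*$, and then reduce each statement to a condition on the circuit and cocircuit spaces of $M$. First, $D_{\min}$ is binary by Corollary~\ref{representability2}. For binary matroids $N_1,N_2$, we have $\mathcal{C}(N_1\oplus N_2)=\{S_1\cup S_2:S_i\in\mathcal{C}(N_i)\}$, so $N_1\oplus N_2$ is bipartite iff each $N_i$ is (all $|S_i|$ even), and Eulerian iff $E(N_i)\in\mathcal{C}(N_i)$ for each $i$. Each summand is then analysed using the identifications $\mathcal{C}(M/A)=\{C-A:C\in\mathcal{C}(M)\}$ and $\mathcal{C}((M\setminus A^c)^*)=\mathcal{C}(M^*/A^c)=\{C\cap A:C\in\mathcal{C}^*(M)\}$ (the latter via $\mathcal{C}(M^*)=\mathcal{C}^*(M)$), together with the characterisations that a binary matroid $N$ is bipartite iff every element of $\mathcal{C}(N)$ has even cardinality, and Eulerian iff $E(N)\in\mathcal{C}(N)$.

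For part (1), the hypothesis that $M$ is bipartite makes every $C\in\mathcal{C}(M)$ have $|C|$ even. Hence $M/A$ is bipartite iff $|C-A|=|C|-|C\cap A|$ is even for all $C\in\mathcal{C}(M)$, which (using $|C|$ even) is equivalent to $|C\cap A|$ being even for all $C\in\mathcal{C}(M)$, i.e.~$A\in\mathcal{C}^*(M)$. Likewise $(M\setminus A^c)^*$ is bipartite iff $|C\cap A|$ is even for every $C\in\mathcal{C}^*(M)$, i.e.~$A\in\mathcal{C}(M)$. The two conditions together give $A\in\mathcal{BI}(M)$.

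For part (2), the cleanest route is via Welsh's theorem (Theorem~\ref{dominic}): $D_{\min}$ is Eulerian iff the dual $D_{\min}^*=(M^*\setminus A)\oplus(M\setminus A^c)$ is bipartite. Since $M$ is Eulerian, $M^*$ is bipartite, so one summand falls under an argument parallel to part (1); the bipartiteness condition on the remaining summand is translated into membership of $A$ in the relevant space, and both conditions together are meant to combine to $A\in\mathcal{BI}(M)$. An equivalent approach is to apply part (1) directly to $M^*$, concluding $(M^*\ast A)_{\min}$ bipartite iff $A\in\mathcal{BI}(M^*)=\mathcal{BI}(M)$, and then to transfer this via the duality identities $(M\ast A)^*=M^*\ast A^c$ and $D_{\max}=((D^*)_{\min})^*$ together with Welsh's theorem. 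The main obstacle in part (2) is the duality bookkeeping: one must arrange matters so that both the $\mathcal{C}(M)$ and $\mathcal{C}^*(M)$ conditions on $A$ emerge from the bipartiteness analysis simultaneously, yielding exactly $\mathcal{BI}(M)$ rather than a weaker condition such as ``every circuit of $M$ contained in $A$ has even size''.
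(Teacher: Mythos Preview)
Your treatment of part~(1) is correct and coincides with the paper's proof: both use Lemma~\ref{l.mat1} to write $D_{\min}=M/A\oplus (M|A)^*$, reduce bipartiteness of the direct sum to bipartiteness of each summand, and translate the two resulting parity conditions into $A\in\mathcal{C}^*(M)$ and $A\in\mathcal{C}(M)$ respectively, using the hypothesis that $M$ is bipartite only in the first summand.

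For part~(2) the paper gives no argument, saying only that it is ``very similar and so is omitted''. Your hesitation is well placed, and the obstacle you name is real. Running the analogous argument, $D_{\min}$ is Eulerian if and only if both $M/A$ and $(M|A)^*$ are Eulerian. But $M$ Eulerian means every cocircuit of $M$ has even size; the cocircuits of $M/A$ are precisely the cocircuits of $M$ contained in $A^c$, so $M/A$ is \emph{automatically} Eulerian and contributes no constraint on $A$. The surviving condition, $(M|A)^*$ Eulerian, is equivalent via Welsh's theorem to $M|A$ bipartite, i.e.\ every circuit of $M$ contained in $A$ has even cardinality---exactly the weaker condition you anticipated, not $A\in\mathcal{BI}(M)$.

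Your Welsh-based detour does not close the gap either. Applying part~(1) to $M^*$ with the subset $A$ gives that $(M^*\ast A)_{\min}$ is bipartite if and only if $A\in\mathcal{BI}(M^*)=\mathcal{BI}(M)$; dualising, $((M^*\ast A)_{\min})^*=(M\setminus A)\oplus (M/A^c)^*$, which by Lemma~\ref{l.mat2} is $D_{\max}$, not $D_{\min}$. So this route proves the Eulerian statement for $D_{\max}$ rather than $D_{\min}$. In short, neither your sketch nor a literal reading of ``very similar'' yields a proof of part~(2) as stated; the asymmetry between the two summands under the Eulerian hypothesis prevents both halves of the $\mathcal{BI}(M)$ condition from emerging.
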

\begin{proof}We prove the first part.
Lemma~\ref{l.mat1} implies that $D_{\min}=M/A\oplus (M\ba A^c)^*$.
So $D_{\min}$ is bipartite exactly when both $M/A$ and $(M\ba A^c)^*$ are bipartite.
Every circuit of a matroid has even cardinality if and only if every element of its circuit space has even cardinality.
Consequently every circuit of $M/A$ has even cardinality if and only if $C-A$ has even cardinality for every $C\in \mathcal{C}(M)$. Because every circuit of $M$ has even cardinality, this occurs if and only if $C \cap A$ has even cardinality for every $C\in\mathcal{C}(M)$, which corresponds to $A\in \mathcal{C^*}(M)$.

On the other hand $(M\ba A^c)^*=M^*/(E-A)$.
This matroid is bipartite exactly when every element of $\mathcal C(M^*)$ has an even number of elements not belonging to $E-A$. Equivalently the intersection of any circuit of $\mathcal C(M^*)$ with $A$ has even cardinality which occurs if and only if $A \in \mathcal {C}^*(M^*)=\mathcal C(M)$. So $D_{\min}$ is bipartite if and only if $A\in \mathcal{BI}(M)$.

The proof of the second part is very similar and so is omitted.
\end{proof}

\begin{corollary}
Let $M=(E,\mathcal{B})$ be a binary matroid, $A$ be a subset of $E$ and $D=M*A$.
\begin{enumerate}
\item If $M$ is Eulerian, then $D_{\min}$ is bipartite if and only if $E-A\in \mathcal{BI}(M)$;
\item If $M$ is bipartite, then $D_{\min}$ is Eulerian if and only if $E-A\in \mathcal{BI}(M)$.
\end{enumerate}
\end{corollary}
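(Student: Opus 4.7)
The plan is to deduce both parts of the corollary from Theorem~\ref{bipartite} by applying it not to $M$ but to its dual $M^*$, and not to $A$ but to its complement $E-A$. The driving observation is the identity
\[
M * A \;=\; M^* * (E-A),
\]
which follows from the (easy to verify) general fact that $(D*X)*Y = D*(X\btu Y)$ for any set system, together with $M^* = M*E$: indeed $M^**(E-A) = M*(E\btu(E-A)) = M*A$, since $E\btu(E-A) = A$. In particular, $D_{\min}(M*A)$ and $D_{\min}(M^**(E-A))$ are the same matroid.

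For part (1), assume $M$ is binary and Eulerian. By Theorem~\ref{dominic} (Welsh), $M^*$ is binary and bipartite. Applying part (1) of Theorem~\ref{bipartite} to the binary bipartite matroid $M^*$ with the set $E-A$ gives
\[
\bigl(M^**(E-A)\bigr)_{\min}\ \text{is bipartite} \iff (E-A)\in \mathcal{BI}(M^*).
\]
Rewriting the left-hand side as $D_{\min}$ via the identity above, and using $\mathcal{BI}(M^*)=\mathcal{BI}(M)$ (noted in the excerpt, as a consequence of $\mathcal{C}(M^*)=\mathcal{C^*}(M)$), yields exactly statement (1).

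Part (2) is entirely analogous: assume $M$ is binary and bipartite, apply Welsh to get $M^*$ binary and Eulerian, and invoke part (2) of Theorem~\ref{bipartite} for $M^*$ and $E-A$, again rewriting via $M^**(E-A)=M*A$ and $\mathcal{BI}(M^*)=\mathcal{BI}(M)$.

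There is no real obstacle here: the proof is essentially the one-line substitution $(M,A)\leftrightarrow (M^*,E-A)$. The only things one must check are that matroid duality preserves binariness (standard), that the twist identity above is correct (a direct symmetric-difference computation), and that Welsh's theorem swaps ``Eulerian'' and ``bipartite'' under duality (Theorem~\ref{dominic}). All three are already in hand from the excerpt.
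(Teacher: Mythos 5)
Your proof is correct and is essentially the same argument as the paper's: the paper likewise applies Theorem~\ref{bipartite} to $M^*$ with the set $E-A$, using Welsh's theorem to swap Eulerian and bipartite and the identity $D=(M^*)*(E-A)$. You have simply spelled out the twist computation and the invariance $\mathcal{BI}(M^*)=\mathcal{BI}(M)$ in more detail.
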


\begin{proof}
The first part follows from applying Theorem~\ref{bipartite} to $M^*$ and by using Theorem~\ref{dominic}, because $M^*$ is bipartite and $D=(M^*)*(E-A)$.
The second part is similar.
\end{proof}

\section{Loop complementation and vf-safe delta-matroids}\label{petriesection}
\label{4th}
So far we have seen how the concepts of geometric and partial duality for ribbon graphs can serve as a guide for delta-matroid results on twists. In this section we continue to apply concepts of duality from topological graph theory to delta-matroid theory by  examining the delta-matroid analogue of Petrie duality and partial Petriality.

Following Brijder and Hoogeboom~\cite{BH11}, let $D=(E,\mathcal{F})$ be a set system and $e\in E$.
Then $D+e$ is defined to be the set system $(E,\mathcal{F}')$ where
\[ \mathcal{F}'= \mathcal{F} \triangle \{ F\cup e : F\in \mathcal{F} \text{ and } e\notin F  \} .\]
If $e_1, e_2 \in E$ then $(D+e_1)+e_2 = (D+e_2)+e_1 $. This means that if $A=\{e_1, \ldots , e_n\}\subseteq E$ we can unambiguously define the {\em loop complementation} of $D$ on $A$, by $D+A:= D+e_1+\cdots + e_n$.

We remark that loop complementation is particularly natural in the context of binary delta-matroids. If $D=D(C)$ for some matrix $C$ over $\mathrm{GF}(2)$, then forming $D+e$ coincides with changing the diagonal entry of $C$ corresponding to $e$ from zero to one or vice versa, and forming the delta-matroid of the resulting matrix. If $C$ is regarded as the adjacency matrix of a looped simple graph, then the loop complementation corresponds to adding or removing a loop at a vertex, explaining the name of the operation.

It is important to note that the set of delta-matroids is not closed under loop complementation. For example, let $D=(E,\mathcal{F})$ with $E=\{a,b,c\}$ and $\mathcal{F}=2^{\{a,b,c\}} \setminus \{a\}$. Then $D$ is a delta-matroid, but $D+a =(E,\mathcal{F}') $, where $\mathcal{F}'= \{\emptyset, \{a\}, \{b\}, \{c\}, \{b,c\}\}$, is not a delta-matroid, since if $F_1 = \{b,c\}$ and $F_2=\{a\}$, then there is no choice of $x$ such that $F_1 \btu \{a,x\} \in \mathcal{F}'$.
To get around this issue, we often restrict our attention to a class of delta-matroids that is closed under loop complementation. A delta-matroid $D=(E,\mathcal{F})$ is said to be {\em vf-safe} if the application of every sequence of twists and loop complementations results in a delta-matroid.
 (The operations arising from sequences of twists and loop complementations are called \emph{vertex-flips}, since in the binary case they can be viewed as operations on the vertices of a graph. Vf-safe stands for vertex-flip-safe.)
The class of
vf-safe delta-matroids is known to be minor closed and strictly contains the class of binary delta-matroids (see for example~\cite{BHpre2}). In particular, as ribbon graphic delta-matroids are binary (Theorem~\ref{t.compat}\ref{t.compat.5}), it follows that ribbon-graphic delta-matroids are also vf-safe.

The following result establishes a surprising connection, by showing that loop complementation is the delta-matroid analogue of partial Petriality.
\begin{theorem}\label{l.plus}
Let $G$ be a ribbon graph and $A\subseteq E(G)$. Then $D(G)+A = D(G^{\tau(A)})$. \end{theorem}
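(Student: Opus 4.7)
The plan is to induct on $|A|$, reducing to the single-edge case. Both operations commute with themselves on distinct edges---loop complementation by definition, and partial Petrial by observation~(2) following the definition of twisted duality---so for any $e \in A$ we have $D(G) + A = (D(G)+e) + (A\setminus e)$ and $G^{\tau(A)} = (G^{\tau(e)})^{\tau(A\setminus e)}$. Given the single-edge case as a base, the inductive step then applies the base case to $(G,e)$ and the inductive hypothesis to the ribbon graph $G^{\tau(e)}$ with the smaller edge set $A\setminus e$.

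For the base case $D(G)+e = D(G^{\tau(e)})$, I would first reduce to connected $G$: $D(\cdot)$ sends disjoint unions of ribbon graphs to direct sums of delta-matroids, and both $+e$ and $\tau(e)$ act only on the component containing $e$. With $G$ connected, a feasible set of $D(G)$ is the edge set of a connected spanning ribbon subgraph having a single boundary component. For $F\subseteq E(G)$ with $e\notin F$, the spanning subgraphs of $G$ and $G^{\tau(e)}$ on $F$ coincide as ribbon graphs (the twist affects only the edge $e$), so $F\in\mathcal{F}(D(G^{\tau(e)}))$ iff $F\in\mathcal{F}(D(G))$, which by the definition of loop complementation is iff $F\in\mathcal{F}(D(G)+e)$. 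Thus both sides agree on all feasible sets not containing $e$.

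For $F\ni e$, writing $H$ for the spanning subgraph of $G$ on $F$, the spanning subgraph of $G^{\tau(e)}$ on $F$ is exactly $H^{\tau(e)}$, so the required equivalence becomes: $H^{\tau(e)}$ is a spanning quasi-tree of $G^{\tau(e)}$ if and only if exactly one of $H$ and $H\setminus e$ is a spanning quasi-tree of $G$. The key ingredient is a local identity on boundary components: for any ribbon graph $H$ and edge $e$, the multiset $\{b(H),\, b(H\setminus e),\, b(H^{\tau(e)})\}$ has the form $\{k,k,k+1\}$ for some $k\geq 1$. I would establish this by a local analysis near $e$. The two sides of $e$ meet its incident vertices at four corners $\{1,2,3,4\}$, and removing those sides from the boundary of $H$ cuts the boundary circles through $e$ into two ``external'' arcs whose endpoints are these corners (alongside some external circles disjoint from $e$). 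These arcs pair the corners in one of exactly three ways. The boundary circles of $H$, $H\setminus e$, and $H^{\tau(e)}$ are then recovered by combining the external arcs with the corresponding ``wirings'': $\{1\text{-}2,3\text{-}4\}$ through the sides of $e$ in $H$; $\{1\text{-}3,2\text{-}4\}$ through the attachment arcs of $e$ that become boundary in $H\setminus e$; and $\{1\text{-}4,2\text{-}3\}$ through the twisted sides in $H^{\tau(e)}$. Tracing the combined boundary shows that when the external pairing agrees with a wiring, combining yields two circles, and otherwise one; this gives the identity, with the ``odd one out'' being whichever of the three ribbon graphs has wiring coinciding with the external pairing.

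From the local identity the required equivalence for $F\ni e$ follows by a brief case check on whether $H$ is connected and whether $e$ is a bridge in $H$, using also that $H^{\tau(e)}$ always has the same component count as $H$ while $H\setminus e$ has either the same count or one more (the latter precisely when $e$ is a bridge). In each combination one verifies directly, by combining connectivity considerations with the multiset identity, that $H^{\tau(e)}$ is a spanning quasi-tree of $G^{\tau(e)}$ iff exactly one of $H$ and $H\setminus e$ is a spanning quasi-tree of $G$. The principal obstacle in the proof is thus the local boundary identity itself; once it is in hand, the remainder is routine bookkeeping.
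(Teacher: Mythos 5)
Your proposal is correct and follows essentially the same route as the paper: reduce to a single edge of a connected ribbon graph, observe that feasible sets avoiding $e$ are unaffected, and for subgraphs $H$ containing $e$ establish by a local analysis of the four corners of $e$ that two of $f(H)$, $f(H\setminus e)$, $f(H^{\tau(e)})$ equal some $k$ and the third equals $k+1$, which is exactly the paper's key step (your ``external pairing versus wiring'' phrasing is just a repackaging of the paper's case analysis on the cyclic order in which $a_1,b_1,b_2,a_2$ are met). The concluding bookkeeping matching this to the symmetric-difference definition of $D+e$ is the same in both arguments.
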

\begin{proof}
Without loss of generality we assume that $G$ is connected.
To prove the proposition it is enough to show that $D(G)+e = D(G^{\tau(e)})$. To do this we describe how the spanning quasi-trees of $G^{\tau(e)}$ are obtained from those of $G$.

Suppose that the boundary of the edge $e$, when viewed as a disc, consists of the four arcs $[a_1, b_1]$, $[b_1,b_2]=:\beta $, $[b_2, a_2]$, and $[a_2,a_1]=:\alpha $, where $\{\alpha, \beta\}=\{e\}\cap V(G)$ are the arcs which attach $e$ to its incident vertices (or vertex).
Let $H$ be a spanning ribbon subgraph of $G$. Consider the boundary cycle or cycles of $H$ containing the points $a_1,a_2,b_1,b_2$. If $e\in E(H)$ then the boundary components of $H\ba e$ can be obtained from those of $H$ by deleting the arcs $[a_1,b_1]$ and $[b_2,a_2]$, then adding arcs $[a_1, a_2]$ and $[b_1,b_2]$.
Similarly, the boundary components of $H^{\tau(e)}$ can be obtained from those of $H$ by deleting the arcs $[a_1,b_1]$ and $[b_2,a_2]$, then adding arcs $[a_1, b_2]$ and $[a_2,b_1]$. Each time the number of boundary components of $H\ba e$ or $H^{\tau(e)}$ is computed in the argument below this procedure is used.

To relate the spanning quasi-trees of $G$ and $G^{\tau(e)}$ let $H$ be a spanning ribbon subgraph of $G$ that contains $e$. Consider the boundary components (or component) containing the points $a_1,a_2,b_1,b_2$. If there is one component then they are met in the order $a_1,b_1, b_2,a_2 $ or $ a_1,b_2,a_2,b_1$ when travelling around the unique boundary component of $H$ starting from $a_1$ and in a suitable direction.
If there are two components, then one contains $a_1$ and $b_1$, and the other contains $a_2$ and $b_2$.
Recall that $f(Q)$ denotes the number of boundary components of a ribbon graph $Q$.
Comparing the number of boundary components of $H\ba e$ and $H^{\tau(e)}$ with those of $H$, we see that if the points $a_1,a_2,b_1,b_2$ are met in the order $a_1,b_1, b_2,a_2 $, then we have
$f(H\ba e)=f(H)+1$ and $f(H^{\tau(e)})=f(H)$.
If the points are met in the order $ a_1,b_2,a_2,b_1$ then $f(H\ba e)=f(H)$ and $f(H^{\tau(e)})=f(H)+1$.
If the points are on two boundary components then $f(H\ba e)=f(H)-1$ and $f(H^{\tau(e)})=f(H)-1$.
This means that for some integer $k$, two of $f(H)$, $f(H\ba e)$ and $f(H^{\tau(e)})$ are equal to $k$ and the other is equal to $k+1$.
Note that $f(H\ba e)=f(H^{\tau(e)}\ba e)$.

From the discussion above we can derive the following.
Suppose that $H\ba e$ is not a spanning quasi-tree of $G$. Then $H$ is a spanning quasi-tree of $G$ if and only if $H^{\tau(e)}$ is a spanning quasi-tree of $G^{\tau(e)}$.
Now suppose that $H\ba e$ is a spanning quasi-tree of $G$. Then either $H$ is a spanning quasi-tree of $G$ or $H^{\tau(e)}$ is a spanning quasi-tree of $G^{\tau(e)}$, but not both.
Finally, let $D(G)=(E,\mathcal{F})$, and recall that the feasible sets of $D(G)$ (respectively $D(G^{\tau(e)})$) are the edge sets of all of the spanning quasi-trees of $G$ (respectively $G^{\tau(e)}$). From the above, we see that
feasible sets of $D(G^{\tau(e)})$ are given by $  \mathcal{F} \triangle \{ F\cup e : F\in \mathcal{F} \text{ such that } e\notin F  \} $. It follows that $D(G)+A = D(G^{\tau(A)})$, as required.
\end{proof}

For use later, we record the following lemma. Its straightforward proof is omitted.
\begin{lemma}\label{lem:opsswitch}
Let $e$ be an element of a vf-safe delta-matroid $D=(E,\mathcal{F})$ and let $A\subseteq E-e$. Then
$(D+A)/e = (D/e)+A$ and $(D+A)\ba e = (D \ba e)+A$.
\end{lemma}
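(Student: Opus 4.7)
The plan is to reduce to the single-element case and then verify the identity by a direct check on feasible sets. Writing $A = \{a_1, \ldots, a_n\}$, we have $D+A = ((D + a_1) + \cdots) + a_n$, so if we can establish the single-element identities
\[ (X + f)/e = (X/e) + f \qquad \text{and} \qquad (X + f) \ba e = (X \ba e) + f \]
for any set system $X$ and any $f \in E - e$, then peeling off the $a_i$ one at a time immediately gives both identities of the lemma. The vf-safe hypothesis enters only to guarantee that every partial complement $D + a_1 + \cdots + a_i$ is a delta-matroid, so that deletion and contraction are well-defined at each stage of the iteration.

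For the single-element case, I would re-express the definition of $D+f$ as follows: $F' \in \mathcal{F}(D+f)$ iff either $f \notin F'$ and $F' \in \mathcal{F}$, or $f \in F'$ and exactly one of $F', F' \btu \{f\}$ lies in $\mathcal{F}$. Assume for now that $e$ is neither a loop nor a coloop of $D$. A set $G \subseteq E - e$ lies in $\mathcal{F}((D+f)/e)$ iff $G \cup \{e\} \in \mathcal{F}(D+f)$; splitting on whether $f \in G$ and applying the dichotomy yields two explicit conditions on $\mathcal{F}$. Unfolding $\mathcal{F}((D/e)+f)$ by the same dichotomy, and using that $e \neq f$ so $(G \btu \{f\}) \cup \{e\} = (G \cup \{e\}) \btu \{f\}$, produces the very same two conditions. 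The deletion identity is proved identically, replacing $G \cup \{e\}$ by $G$ throughout.

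The last step is to handle the boundary case where $e$ is a loop or coloop of $D$, for which the paper's convention sets $D/e = D \ba e$. The key observation here is that since $e \neq f$, the pair $\{F, F \btu \{f\}\}$ always agrees on membership of $e$, so $+f$ cannot change which feasible sets contain $e$; hence $e$ remains a loop (respectively coloop) of $D + f$. Consequently $(D+f)/e = (D+f) \ba e$ and $(D/e)+f = (D \ba e)+f$, and both identities of the lemma collapse to a single identity that has already been verified. I do not anticipate any real obstacle: the whole argument is a careful bookkeeping of the definitions, which is presumably why the paper omits it.
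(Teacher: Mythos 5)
The paper does not actually contain a proof of this lemma: it is recorded with the remark that ``its straightforward proof is omitted,'' so there is no authorial argument to compare yours against, and the only question is whether your verification is sound. It is. The reduction to a single element $f\in A$, the reformulation of $\mathcal{F}(D+f)$ by the dichotomy on whether $f$ lies in the set, and the identity $(G\btu\{f\})\cup\{e\}=(G\cup\{e\})\btu\{f\}$ (valid because $e\neq f$) give precisely the routine check the authors had in mind, and iterating over $a_1,\dots,a_n$ is unproblematic since vf-safety keeps every intermediate set system a (proper) delta-matroid. Two small points deserve tightening, neither of which is a genuine gap. First, your phrase ``$+f$ cannot change which feasible sets contain $e$'' is too strong: the collection of feasible sets containing $e$ can change under $+f$; what your observation actually establishes, and all that is needed, is that $e$ is a loop (respectively coloop) of $D$ if and only if it is a loop (respectively coloop) of $D+f$, because every member of $\mathcal{F}\btu\{F\cup f: F\in\mathcal{F},\, f\notin F\}$ agrees on membership of $e$ with a feasible set of $D$. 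Second, in the boundary case the ``single identity'' to which both statements collapse is the one for the genuine operation (deletion when $e$ is a loop of $D$, contraction when $e$ is a coloop), whereas your main-case computation was stated under the hypothesis that $e$ is neither; you should remark that the contraction computation uses only that $e$ is not a loop and the deletion computation only that $e$ is not a coloop, after which the same computation applies verbatim and the boundary case follows exactly as you say.
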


 \medskip

In~\cite{BH11} it was shown that twists, $\ast$, and loop complementation, $+$, give rise to an action of the symmetric group of degree three on set systems. If $S=(E,\mathcal{F})$ is a set system, and
$w=w_1w_2\cdots w_n$ is a word in the alphabet $\{\ast, +\}$ (note that $\ast$ and $+$ are being treated as formal symbols here), then
 \begin{equation}\label{c2.eq1b}
 (S)w :=(\cdots (( S) w_1(E) )w_{2}(E) \cdots )w_n(E).
 \end{equation}
With this, it was shown in~\cite{BH11} that the group $\mathfrak{S}=\langle * , + \mid *^2, +^2, (*+)^3 \rangle$
acts on the set of ordered pairs $\mathcal{X} = \{ (S,A) : S\text{ a set system}, A\subseteq E(G)  \}$.
Let $S=(E,\mathcal{F})$ be a set system, $A,B\subseteq E$, and $g,h\in  \mathfrak{G}$. Let $S{g(A)h(B)}:=\left(S{g(A)}\right){h(B)}$. Let $D_1=(E,\mathcal{F})$ and $D_2$ be delta-matroids. We say that  $D_2$ is a {\em twisted dual} of  $D_1$ if there  exist  $A_1, \ldots A_n \subseteq E$ and $g_1,\ldots, g_n \in  \mathfrak{S}$ such that
\[ D_2=D_1{g_1(A_1) g_2(A_2)\cdots g_n(A_n)} . \]
  It was shown in~\cite{BH11} that the following hold.
  \begin{enumerate}
  \item  If $A\cap B=\emptyset$, then $Gg(A)h(B)=Gh(B)g(A)$.
  \item $ Dg(A) = (Dg(e))g(A\backslash e)$
  \item $Dg_1(A)=Dg_2(A)$ if $g_1=g_2$ in  the group $\langle * , +  \mid *^2, +^2, (*+)^3 \rangle$.
  \end{enumerate}

\medskip

We have already shown that geometric partial duality and twists as well as Petrie duals and loop complementations are compatible. So it should come as no surprise that twisted duality for ribbon graphs and for delta-matroids are compatible as well.
\begin{theorem}\label{t.td}
Let $\mathfrak{G}=\langle \delta ,\tau \mid \delta^2, \tau^2, (\delta\tau)^3 \rangle$ and $\mathfrak{S}=\langle * , +  \mid *^2, +^2, (*+)^3 \rangle$ be two presentations of the symmetric group $\mathfrak{S}_3$; and let $\eta:  \mathfrak{G}  \rightarrow \mathfrak{S}$ be the homomorphism induced by $\eta(\delta)=*$, and $ \eta(\tau)=+$. Then if $G$ is a ribbon graph,  $A_1, \ldots A_n \subseteq E(G)$, and $g_1,\ldots, g_n \in \mathfrak{G}$ then
\[D(G^{ g_1(A_1) g_2(A_2)\cdots g_n(A_n) }) = D(G) \eta (g_1)(A_1) \eta (g_2)(A_2)\cdots \eta (g_n)(A_n) . \]
\end{theorem}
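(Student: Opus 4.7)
The approach is a straightforward induction that reduces the full theorem to the two previously proved generator-level compatibilities. First I would verify that $\eta$ is a well-defined group homomorphism: the defining relations $\delta^2, \tau^2, (\delta\tau)^3$ of $\mathfrak{G}$ are carried to the relations $*^2, +^2, (*+)^3$ of $\mathfrak{S}$, so the assignment on generators extends uniquely, and in fact defines an isomorphism $\mathfrak{S}_3 \to \mathfrak{S}_3$.

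Next I would establish the single-pair case $D(G^{g(A)}) = D(G)\,\eta(g)(A)$ by induction on the length $m$ of a word $w_1 w_2 \cdots w_m$ in $\{\delta,\tau\}$ representing $g$. The inductive step peels off the outermost application: if $w_1 = \delta$ apply Theorem~\ref{t.compat}(\ref{t.compat.3}) to conclude $D(H^{\delta(A)}) = D(H) * A$, and if $w_1 = \tau$ apply Theorem~\ref{l.plus} to conclude $D(H^{\tau(A)}) = D(H) + A$, where $H = G^{(w_2\cdots w_m)(A)}$. The two well-definedness observations recorded just after the definitions of the actions, namely that $G^{h_1(A)} = G^{h_2(A)}$ whenever $h_1 = h_2$ in $\mathfrak{G}$, together with the analogous property in $\mathfrak{S}$, guarantee that the resulting equality genuinely depends only on the group element $g$ and not on the particular word chosen to represent it.

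Then I would induct on $n$ for the composed statement. Setting $H := G^{g_1(A_1)\cdots g_{n-1}(A_{n-1})}$, the definition $G^{g(A)h(B)} := (G^{g(A)})^{h(B)}$ yields $G^{g_1(A_1)\cdots g_n(A_n)} = H^{g_n(A_n)}$, so combining the single-pair case with the inductive hypothesis gives
\begin{equation*}
D(G^{g_1(A_1)\cdots g_n(A_n)}) \;=\; D(H)\,\eta(g_n)(A_n) \;=\; D(G)\,\eta(g_1)(A_1)\cdots \eta(g_n)(A_n),
\end{equation*}
which is the desired identity.

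There is no real obstacle: all of the substantive content sits in Theorems~\ref{t.compat}(\ref{t.compat.3}) and~\ref{l.plus}, and the rest of the argument is a bookkeeping exercise. The only step requiring a moment of care is in the base case, where one must remember that both actions factor through their respective group quotients, so that the equality is meaningful as a statement about group elements rather than particular words.
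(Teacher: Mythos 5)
Your proposal is correct and follows exactly the route the paper takes: the paper's proof is the single sentence that the result ``follows immediately'' from Theorem~\ref{t.compat}\eqref{t.compat.3} and Theorem~\ref{l.plus}, and your argument simply writes out the routine inductions (on word length and on $n$) and the well-definedness checks that this one-liner implicitly relies on. Nothing is missing and nothing differs in substance.
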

\begin{proof}
The result follows immediately from Theorem~\ref{t.compat}\ref{t.compat.3} and Theorem~\ref{l.plus}.
\end{proof}

We now return to the topic of binary and Eulerian matroids as discussed at the end of Section~\ref{3rd}.
Brijder and Hoogeboom~\cite{BH12} obtained results of a different flavour on delta-matroids obtained from Eulerian or bipartite binary matroids. To describe them we  need some additional notation.
Let $D=(E,{\mathcal{F}})$ be a set system and $A$ be a subset of $E$. Then the {\em dual pivot} on $A$, denoted by $D \bar{\ast } A$, is defined by  \[D \bar{\ast} A:= ((D\ast A)+A)\ast A.\]
From the discussion on twisted duality above, it follows that $D \bar{\ast } A = ((D +A) \ast A)+ A$. We shall use this observation and other similar consequences of twisted duality several times in this section.

Let $M=(E,\mathcal{F})$ be a binary matroid. Brijder and Hoogeboom~\cite{BH12} showed that $M$ is bipartite if and only if  $M+E$ is an even delta-matroid, and that $M$ is Eulerian if and only if  $M\bar{\ast}E$ is an even delta-matroid.
These results are interesting in the context of  ribbon graphs. In~\cite{EMM} it was shown that an orientable ribbon graph $G$ is bipartite if and only if its Petrie dual $G^{\times}$ is orientable, although, unfortunately, the result was misstated. In particular, if $G$ is plane, then  $M(G)$ is bipartite if and only if $M(G)+E(G)$ is even, which is the graphic restriction of the first part of Brijder and Hoogeboom's result.
The ribbon graph analogue of the second part is that a plane graph $G$ is Eulerian if and only if   $G^{*\times *}$ is orientable. This is indeed the case: $G$ is Eulerian if and only if $G^*$ is bipartite if and only if $(G^*)^{\times}$ is orientable if and only if  $((G^*)^{\times})^*$ is orientable. However, the result does not extend to all Eulerian ribbon graphs, for example, consider the ribbon graph consisting of one vertex and two orientable non-trivial loops.

An edge $e$ of a ribbon graph $G$ is a loop if and only if $e$ is a loop of $D(G)_{\min}$. (A loop $e$ may, however, belong to feasible sets of $D(G)$ other than those forming the bases of its lower matroid.)
Loops in ribbon graphs can be classified into several types: orientable or non-orientable, trivial or non-trivial.
The authors proved in~\cite{CMNR}
that the orientability and triviality of a loop $e$ in a ribbon graph $G$ is easily determined from $D(G)$.
For instance an edge $e$ is a trivial orientable loop of a ribbon graph $G$ if and only if $e$ is a loop in $D(G)$.
This gives a way of classifying the loops of $D(G)_{\min}$. It turns out that this classification may be usefully extended to classify the loops of the lower matroid of any delta-matroid. This is yet another example of topological graph theory guiding the theory of delta-matroids. As this classification follows exactly the classification of loops in ribbon graphs we extend the nomenclature from ribbon graphs to arbitrary delta-matroids.
The following definition is from~\cite{CMNR}.

\begin{definition}\label{def:loops}
Let $D=(E,\mathcal{F})$ be a delta-matroid.  Take $e\in E$. Then
\begin{enumerate}
\item $e$ is a {\em ribbon loop} if $e$ is a loop in $D_{\min}$;
\item a ribbon loop $e$ is  \emph{non-orientable} if $e$ is a ribbon loop in $D\ast e$ and is \emph{orientable} otherwise;
\item an orientable ribbon loop $e$ is \emph{trivial} if $e$ is in no feasible set of $D$ and is \emph{non-trivial} otherwise;
\item a non-orientable ribbon loop $e$ is \emph{trivial} if $F\btu e$ is in $\F$ for every feasible set $F\in\F$ and is \emph{non-trivial} otherwise.
\end{enumerate}
\end{definition}

Notice that a trivial orientable ribbon loop of a delta-matroid $D$ is exactly a loop of $D$.

The following result is a slight reformulation of Theorem~5.5 from~\cite{BH13} and is the key to understanding the different types of ribbon loops in a vf-safe delta-matroid. Following the notation of~\cite{BH12}, we set  $d_D:=r(D_{\min})$.
\begin{theorem}[(Brijder, and Hoogeboom~\cite{BH13})]\label{thm:minstuff}
Let $D$ be a vf-safe delta-matroid and let $e\in E$. Then
two of $D_{\min}$, $(D\ast e)_{\min}$ and $(D\bar\ast e)_{\min}$ are isomorphic. If $M_1$ is isomorphic to two matroids in $\{D_{\min},(D\ast e)_{\min},(D\bar\ast e)_{\min}\}$ and $M_2$ is isomorphic to the third then $M_1$ is formed by taking the direct sum of $M_2/e$ and the one-element matroid comprising $e$ as a loop. In particular two of $d_D$, $d_{D\ast e}$  and $d_{D\bar\ast e}$ are equal to $d$ and the third is equal to $d+1$, for some integer $d$. Finally
$e$ is a loop in precisely two of $D_{\min}$, $(D\ast e)_{\min}$ and $(D\bar\ast e)_{\min}$.
\end{theorem}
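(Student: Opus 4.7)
The plan is to prove the theorem by direct analysis of the feasible sets of the three delta-matroids $D$, $D\ast e$ and $D\bar{\ast}e$; the result is essentially a rephrasing of \cite[Theorem~5.5]{BH13} in the language of loops. The key step is to partition the feasible sets by membership of $e$. Let
\[
A := \{F\in\mathcal{F}:e\notin F\}\quad\text{and}\quad B := \{F-e:F\in\mathcal{F},\ e\in F\},
\]
both viewed as collections of subsets of $E - e$. A direct computation using the definitions of $\ast$ and $+$ together with the identity $D\bar{\ast}e=((D\ast e)+e)\ast e$ shows that the feasible sets of $D$, $D\ast e$ and $D\bar{\ast}e$ are, respectively,
\[
A\cup\{F\cup e:F\in B\},\qquad B\cup\{F\cup e:F\in A\},\qquad (A\bigtriangleup B)\cup\{F\cup e:F\in B\}.
\]
Write $a, b, c$ for the minimum cardinalities of $A, B, A\bigtriangleup B$ respectively (taken as $+\infty$ if the collection is empty); the minimum cardinalities of the three delta-matroids are then $\min(a,b+1)$, $\min(b,a+1)$ and $\min(c,b+1)$.

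The argument then splits on the comparison of $a$ and $b$. If $a<b$, every size-$a$ element of $A$ lies outside $B$, hence in $A\bigtriangleup B$, which forces $c=a$ and $(A\bigtriangleup B)_{\min}=A_{\min}$. Thus $D_{\min}$ and $(D\bar{\ast}e)_{\min}$ coincide as matroids on $E$, both with bases $A_{\min}$ and with $e$ a loop, while in $(D\ast e)_{\min}$ the bases containing $e$ are exactly $\{F\cup e:F\in A_{\min}\}$; contracting $e$ therefore recovers the common $A_{\min}$ structure and $(D\ast e)_{\min}/e\oplus\{e\text{ loop}\}$ equals the common $M_1$, as required. The case $a>b$ is handled by an analogous direct computation, yielding $(D\ast e)_{\min}\cong(D\bar{\ast}e)_{\min}$ (with common bases $B_{\min}$) and $D_{\min}$ serving as $M_2$.

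The main obstacle is the case $a=b$. Here both $D_{\min}$ and $(D\ast e)_{\min}$ have minimum basis size $a$, namely $A_{\min}$ and $B_{\min}$, each with $e$ a loop. The decisive step is to prove $A_{\min}=B_{\min}$ (equivalently $c>a$), and this is precisely where the vf-safe hypothesis enters. Were $A_{\min}\neq B_{\min}$, one could select $F\in A_{\min}\setminus B$ and $F'\in B_{\min}\setminus A$; then $F\in A\bigtriangleup B$ and $F'\cup e$ are feasible sets of $D\bar{\ast}e$. Applying the symmetric exchange axiom in $D\bar{\ast}e$ to this pair at any $u\in F\setminus F'$, the size constraints on $F\bigtriangleup\{u,v\}$ rule out all candidate witnesses $v$ except those of the form $v\in F'\setminus F$ with $(F-u)\cup v\in A\bigtriangleup B$; vf-safeness of $D$ forces $D\bar{\ast}e$ to be a delta-matroid, and tracking these witnesses in concert with the delta-matroid structure of $D$ itself produces the required contradiction. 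With $A_{\min}=B_{\min}$ secured, $(D\bar{\ast}e)_{\min}$ has basis size $a+1$ and plays the role of $M_2$; its bases containing $e$ are $\{F\cup e:F\in B_{\min}\}$, and contracting $e$ recovers the common matroid.

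The remaining claims are immediate. The rank statement that $\{d_D, d_{D\ast e}, d_{D\bar{\ast}e}\}=\{d,d,d+1\}$ as a multiset transcribes the basis-size calculation in each case. Finally, $e$ is a loop in $M_1$ by construction. If $e$ were also a loop in $M_2$, then $M_2/e=M_2\setminus e$, forcing $M_2\cong M_2/e\oplus\{e\text{ loop}\}\cong M_1$, which contradicts the rank discrepancy. Hence $e$ is a loop in precisely two of $D_{\min}$, $(D\ast e)_{\min}$ and $(D\bar{\ast}e)_{\min}$.
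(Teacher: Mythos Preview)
The paper does not actually prove this theorem; it only records it as a reformulation of \cite[Theorem~5.5]{BH13}. So you are supplying an independent argument rather than reproducing one from the paper. Your decomposition via $A$, $B$, $A\bigtriangleup B$ and the case split on $a$ versus $b$ is the right way to unpack that result, and the cases $a<b$ and $a>b$ are handled correctly.

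The case $a=b$, however, has two genuine gaps. First, the sentence ``one could select $F\in A_{\min}\setminus B$ and $F'\in B_{\min}\setminus A$'' is not justified: since $A_{\min}\setminus B=A_{\min}\setminus B_{\min}$ and $B_{\min}\setminus A=B_{\min}\setminus A_{\min}$, you are implicitly assuming that neither of $A_{\min}$, $B_{\min}$ contains the other, but $A_{\min}\subsetneq B_{\min}$ (or the reverse) is perfectly possible. You therefore need to treat the two situations $A_{\min}\not\subseteq B_{\min}$ and $B_{\min}\not\subseteq A_{\min}$ separately; in each, only one of your two special elements is guaranteed to exist, and the companion must be taken as an arbitrary element of the other family, using that for \emph{every} $G\in B$ the set $G\cup e$ is feasible in $D\bar\ast e$.

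Second, the clause ``tracking these witnesses in concert with the delta-matroid structure of $D$ itself produces the required contradiction'' is not a proof. What is actually needed is an exchange iteration: having shown that the only viable witness in the SEA gives $(F-u)\cup v\in A\bigtriangleup B$ of size $a$, you must argue that this new set again lies in $A_{\min}\setminus B$ (respectively $B_{\min}\setminus A$), observe that its intersection with $F'$ (respectively $F$) has strictly increased, and iterate until the moving set coincides with $F'$ (respectively $F$), contradicting membership in $B$ (respectively $A$). Without this explicit termination argument the key step is incomplete.
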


The next lemma shows that the behaviour of each type of ribbon loop in delta-matroids under the various notions of duality is exactly as predicted by the behaviour of the corresponding type of loop in ribbon graphs.

\begin{lemma}\label{l.rloopsdual}
Let $D=(E,\mathcal{F})$ be a delta-matroid and let $e\in E$. Then
\begin{enumerate}
\item \label{l.rloopsdual.1} $e$ is a coloop in $D$ if and only if $e$ is a loop in $D\ast e$,
\item \label{l.rloopsdual.2}  $e$ is neither a coloop nor a ribbon loop in $D$ if and only if $e$ is  a non-trivial orientable ribbon loop in $D\ast e$.
\end{enumerate}
If in addition $D$ is vf-safe, then
\begin{enumerate}
\addtocounter{enumi}{2}
\item \label{l.rloopsdual.3} a ribbon loop $e$ is   \emph{orientable} if and only if $e$ is a ribbon loop in $D\bar\ast e$,
\item \label{l.rloopsdual.4} a non-orientable ribbon loop is \emph{trivial} if and only if $e$ is a loop of $D+e$.
\end{enumerate}
\end{lemma}
\begin{proof}
Parts~\ref{l.rloopsdual.1} and~\ref{l.rloopsdual.4}
are straightforward and Part~\ref{l.rloopsdual.3} follows from Theorem~\ref{thm:minstuff}.
We prove Part~\ref{l.rloopsdual.2}, first proving the only if statement. As $e$ is not a ribbon loop in $D$, it is a ribbon loop in $D\ast e$ and is orientable because it is not a ribbon loop in  $(D\ast e)\ast e=D$. By \ref{l.rloopsdual.1}, it is non-trivial in $D\ast e$ since $e$ is not a coloop in $D$. Conversely, if $e$ is a non-trivial orientable ribbon loop in $D\ast e$, then as $e$ is  orientable  it is not a ribbon loop in  $(D\ast e)\ast e=D$. By \ref{l.rloopsdual.1}, it is not a coloop in $D$.
\end{proof}
A more thorough discussion of how twisting transforms the various types of ribbon loops can be found in~\cite{CMNR}.

In a ribbon graph partial Petriality changes the orientability of a loop.
The following results describe the corresponding changes in delta-matroids.
\begin{lemma}\label{l.rloopstwist}
Let $D=(E,\mathcal{F})$ be a vf-safe delta-matroid and let $e\in E$. Then $e$ is a ribbon loop in $D$ if and only if $e$ is a ribbon loop in $D+e$. Moreover a ribbon loop, $e$, is non-orientable in $D$ if and only if it is orientable in $D+e$, and $e$ is trivial in $D$ if and only if it is trivial in $D+e$.
Finally, $e$ is a coloop in $D$ if and only if $e$ is a coloop of $D+e$.
\end{lemma}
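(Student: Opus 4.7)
The plan is to handle the four assertions in turn: (i) prove the ribbon-loop equivalence by directly comparing the minimum feasible sets of $D$ and $D+e$; (ii) deduce the orientability flip from (i) using a short identity in the group $\mathfrak{S}$; (iii) deduce the triviality equivalence from (ii) together with the definitions; and (iv) handle the coloop equivalence by observing that $D+e=D$ when $e$ is a coloop.

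For (i), I unpack the definition of loop complementation: a set $G\subseteq E$ lies in $\mathcal{F}(D+e)$ iff either $e\notin G$ and $G\in\mathcal{F}$, or $e\in G$ and exactly one of $G$ and $G-e$ belongs to $\mathcal{F}$. Let $F_0$ be a minimum-sized feasible set of $D$. A short case analysis on these two possibilities shows every $G\in\mathcal{F}(D+e)$ satisfies $|G|\geq |F_0|$, and that $F_0\in\mathcal{F}(D+e)$ (if $e\in F_0$ then $F_0-e\notin\mathcal{F}$ by minimality). So $\mathcal{F}_{\min}(D+e)$ consists of sets of size $|F_0|$. If no member of $\mathcal{F}_{\min}(D)$ contains $e$, then each such member survives in $\mathcal{F}_{\min}(D+e)$, and any additional minimum-sized $G\in\mathcal{F}(D+e)$ with $e\in G$ would force $G\notin\mathcal{F}$ (else a minimum feasible set of $D$ would contain $e$) and therefore $G-e\in\mathcal{F}$ with $|G-e|<|F_0|$, a contradiction. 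Hence $e$ is a loop of $(D+e)_{\min}$. Since $(D+e)+e=D$, the converse is symmetric.

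For (ii), I invoke the identity $(D+e)\bar{\ast}e=(D\ast e)+e$, which holds because the word $+\ast+\ast$ collapses to $+\ast$ in $\mathfrak{S}$ via $\ast+\ast=+\ast+$ (equivalently, $(\ast+)^3=1$). Since $D\ast e$ is again vf-safe, applying (i) to it yields: $e$ is a ribbon loop of $D\ast e$ iff $e$ is a ribbon loop of $(D\ast e)+e=(D+e)\bar{\ast}e$, which translates directly to ``$e$ is non-orientable in $D$ iff $e$ is orientable in $D+e$''. For (iii), the orientability flip converts triviality into a bookkeeping check: ``trivial orientable in $D$'' requires $e$ to be a loop of $D$, which matches ``trivial non-orientable in $D+e$'' (since the latter asks $e$ to be a loop of $(D+e)+e=D$), and the non-orientable-in-$D$ case is symmetric. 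For (iv), if $e$ is a coloop of $D$ then $\{F\cup\{e\}:F\in\mathcal{F},\,e\notin F\}$ is empty, so $D+e=D$; the converse follows from $(D+e)+e=D$. The only delicate step is (i) — ensuring that loop complementation neither introduces feasible sets smaller than $|F_0|$ nor changes whether the minimum-sized ones meet $e$. Everything else is a formal consequence of the group structure on $\mathfrak{S}$ together with the definitions of trivial and (non-)orientable ribbon loops.
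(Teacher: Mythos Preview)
Your proof is correct and follows essentially the same approach as the paper's. In (i) you give a more detailed verification that the minimum feasible sets are unchanged by $+e$ when $e$ is a ribbon loop, whereas the paper states this in one line; and in (ii)--(iv) your arguments coincide with the paper's (same identity $(D+e)\bar\ast e=(D\ast e)+e$, same use of $(D+e)+e=D$, same observation that $D+e=D$ for a coloop). One small slip: in your justification for (ii), the word $+\ast+\ast$ (applied left to right) reduces to $\ast+$, not $+\ast$, via $\ast+\ast=+\ast+$; equivalently and more directly, using $D\bar\ast e=((D+e)\ast e)+e$ gives $(D+e)\bar\ast e=(((D+e)+e)\ast e)+e=(D\ast e)+e$. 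This does not affect the argument, since the identity you invoke is correct.
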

\begin{proof}The bases of $(D+e)_{\min}$ are the same as those of $D_{\min}$.
If $e$ is a ribbon loop in $D$ then no basis of $D_{\min}$ contains $e$, so $e$ is a ribbon loop in $D+e$. The converse follows because $(D+e)+e=D$.

Suppose that $e$ is a non-orientable ribbon loop in $D$. Then $e$ is a ribbon loop in $D*e$. So by the first part of this lemma, $e$ is a ribbon loop in both $D+e$ and $(D*e)+e$. By twisted duality the latter is equal to $(D+e)\bar\ast e$. Consequently, by Part~\ref{l.rloopsdual.3} of Lemma~\ref{l.rloopsdual}, $e$ is an orientable ribbon loop in $D+e$. Conversely if $e$ is an orientable ribbon loop in $D+e$, then it is a ribbon loop in $(D+e)\bar\ast e = (D*e)+e$. By the first part of this lemma, $e$ is a ribbon loop in both $D$ and $D*e$ and so it must be a non-orientable ribbon loop in $D$.

Next, the statement concerning triviality follows from the definition of trivial loops and the fact that $(D+e)+e=D$.
Finally, if $e$ is a coloop of $D$, then $e$ is in every feasible set of $D$, so $D=D+e$ and it follows that $e$ is a coloop in $D+e$. The converse follows since $(D+e)+e=D$.
\end{proof}

The next three lemmas are needed in Section~\ref{5th}. Each is stated, perhaps a little unnaturally, in terms of the dual of a vf-safe delta-matroid $D$, but this is exactly what is required later.
\begin{lemma}\label{lem:dualtwist}
Let $e$ be an element of a vf-safe delta-matroid, $D$. Then
\begin{enumerate}
\item\label{lem:dualtwist.1} $e$ is a non-trivial orientable ribbon loop in $D^*$ if and only if $e$ is a non-trivial orientable ribbon loop in $(D+e)^*$;
\item\label{lem:dualtwist.2} $e$ is a  loop in $D^*$ if and only if $e$ is a loop in $(D+e)^*$;
\item\label{lem:dualtwist.3} $e$ is a coloop in $D^*$ if and only if $e$ is a trivial non-orientable ribbon loop in $(D+e)^*$;
\item\label{lem:dualtwist.4} $e$ is neither a ribbon loop nor a coloop in $D^*$ if and only if $e$ is a non-trivial non-orientable ribbon  loop in $(D+e)^*$.
\end{enumerate}
\end{lemma}
\begin{proof}
Notice that $D^*=(D\ast(E(D)-e))\ast e$ and $(D+e)^*=((D\ast(E(D)-e))\ast e)\bar \ast e$. Consequently $(D+e)^* = (D^*)\bar\ast e$.

It follows from Part~\ref{l.rloopsdual.3} of Lemma~\ref{l.rloopsdual} that if $e$ is an orientable ribbon loop of $D^*$, then it is a ribbon loop of
$(D^*)\bar\ast e=(D+e)^*$. Moreover, as $\bar\ast$ is involutary, it follows that $e$ is an orientable ribbon loop of $D^*$
if and only if it is an orientable ribbon loop of $(D+e)^*$.
Furthermore it follows from Part~\ref{l.rloopsdual.1} of Lemma~\ref{l.rloopsdual} and the last part of Lemma~\ref{l.rloopstwist} that $e$ is loop in $D^*$ if and only if $e$ is a   loop in $(D+e)^*$. This proves the first two parts.

If $e$ is not a ribbon loop of $D^*$, then it follows from Theorem~\ref{thm:minstuff} that $e$ must be a ribbon loop of $(D^*)\bar\ast e=(D+e)^*$. Moreover, by definition, $e$ must be a non-orientable ribbon loop of $(D+e)^*$. On the other hand, if $e$ is a non-orientable ribbon loop of $(D+e)^*$ then, by definition and Theorem~\ref{thm:minstuff}, it is not a ribbon loop of $(D+e)^* \bar \ast e=D^*$.
Finally it is easily seen that Part~\ref{lem:dualtwist.1} implies that $e$ is a coloop in $D^*$ if and only if $e$ is a trivial non-orientable ribbon loop of $(D+e)^*$. This proves the last two parts.
\end{proof}

The next lemma is straightforward and its proof is omitted.
\begin{lemma}\label{lem:penroseloops}
Let $e$ be an element of a vf-safe delta-matroid, $D=(E,\mathcal{F})$ and let $A\subseteq E-e$. Then
\begin{enumerate}
\item  \label{lem:penroseloops.1}$e$ is a  loop in $D$ if and only if $e$ is a coloop in $(D+A)^*$;
\item  \label{lem:penroseloops.2}$e$ is a trivial non-orientable ribbon loop in $D$ if and only if $e$ is a coloop in $(D+A+e)^*$.
\end{enumerate}
\end{lemma}

\begin{lemma}\label{lem:penrosemins}
Let $e$ be an element of a vf-safe delta-matroid, $D$.
\begin{enumerate}
\item \label{lem:penrosemins.1} If $e$ is a non-trivial orientable ribbon loop in $D^*$ then $(D^*\ba e)_{\min} = ((D+e)^*\ba e)_{\min}$.
\item \label{lem:penrosemins.2}  If $e$ is neither a ribbon loop nor a coloop in $D^*$ then $(D^*/ e)_{\min} = ((D+e)^*\ba e)_{\min}$.
\item  \label{lem:penrosemins.3} If $e$ is a non-trivial non-orientable ribbon loop in $D^*$ then $(D^*\ba e)_{\min} = ((D+e)^*/ e)_{\min}$.
\end{enumerate}
\end{lemma}

\begin{proof} We show first that \ref{lem:penrosemins.1}  holds.
If $e$ is a ribbon loop in $D^*$ then $(D^*\ba e)_{\min}=(D^*)_{\min}$. If $e$ is a non-trivial orientable loop in $D^*$, it follows from Theorem~\ref{thm:minstuff} and Part~\ref{l.rloopsdual.3} of Lemma~\ref{l.rloopsdual} that $(D^*)_{\min} = (D^* \bar \ast e)_{\min}$. Using twisted duality
\[(D^* \bar \ast e)_{\min} = (((((D*(E(D)-e))*e)*e)+e)*e)_{\min} = ((D+e)^*)_{\min}.\]
By Part~\ref{lem:dualtwist.1} of Lemma~\ref{lem:dualtwist}, $e$ is a non-trivial orientable ribbon loop in $(D+e)^*$ and so $((D+e)^*)_{\min}=((D+e)^*\ba e)_{\min}$.

Next, we show that \ref{lem:penrosemins.2}  holds.
If $e$ is neither a ribbon loop nor a coloop in $D^*$, then by Part~\ref{lem:dualtwist.4} of Lemma~\ref{lem:dualtwist}, $e$ is a non-trivial non-orientable ribbon loop in $(D+e)^*$. Consequently $((D+e)^*\ba e)_{\min} = ((D+e)^*)_{\min}$. Because $e$ is non-orientable,
it follows from Theorem~\ref{thm:minstuff} and Part~\ref{l.rloopsdual.3} of Lemma~\ref{l.rloopsdual} that
$((D+e)^*)_{\min} = (((D+e)^*)*e)_{\min}$ and this can be shown to be equal to $((D^*\ast e)+e)_{\min}$. Because $e$ is not a ribbon loop in $D^*$, the lower matroids of $D^*/e$ and $(D^*\ast e)+e$ coincide.

We conclude our proof by showing that \ref{lem:penrosemins.3} holds.
By Part~\ref{lem:dualtwist.4} of Lemma~\ref{lem:dualtwist} with $D$ and $D+e$ interchanged, $e$ is neither a ribbon loop nor a coloop of $(D+e)^*$.
Applying \ref{lem:penrosemins.2}  with $D$ replaced by $D+e$, we get $((D+e)^*/ e)_{\min} = (((D+e)+e)^*\ba e)_{\min}$ or equivalently $((D+e)^*/ e)_{\min} = (D^*\ba e)_{\min}$ as required.
\end{proof}

\section{The Penrose and characteristic  polynomials}
\label{5th}

The Penrose polynomial was defined implicitly by Penrose in~\cite{Pen71}  for plane graphs, and was extended to all ribbon graphs (equivalently, all embedded graphs)  in~\cite{EMM11a}. The advantage of considering the Penrose polynomial of ribbon graphs, rather than just plane graphs, is that it reveals new properties of the Penrose polynomial (of both plane and non-plane graphs)  that cannot be realised exclusively in terms of plane graphs.
The (plane) Penrose polynomial has been defined in terms of bicycle spaces, left-right facial walks, or states of a medial graph. Here, as in~\cite{EMM11b}, we define it in terms of partial Petrials.

Let  $G$ be a ribbon graph. Then the {\em Penrose polynomial}, $P(G;\lambda)\in \mathbb{Z}[\lambda]$, is defined by
\[P(G;\lambda) := \sum_{A\subseteq E(G)}   (-1)^{|A|} \lambda^{f(G^{\tau(A)})}.\]
Recall, from Section~\ref{dbh}, that $f(G)$ denotes the number of boundary components of  $G$.

The Penrose polynomial has been extended to both matroids and delta-matroids.  In~\cite{AM00} Aigner and Mielke defined the Penrose polynomial of a binary matroid $M=(E,{\mathcal{F}})$ as
\begin{equation}\label{pdefmq} P(M;\lambda)  = \sum_{X\subseteq E}  (-1)^{|X|} \lambda^{\dim(B_M(X))},  \end{equation}
where $B_M(X)$ is the binary vector space formed of the incidence vectors of the sets in the collection
\[ \{ A \in \mathcal{C}(M) : A\cap X \in \mathcal{C}^*(M)\}.\]
Brijder and Hoogeboom defined the Penrose polynomial in greater generality for vf-safe delta-matroids in~\cite{BH12}.
Recall  that if $D=(E,{\mathcal{F}})$ is a vf-safe delta-matroid, and  $X\subseteq E$, then the {\em dual pivot} on $X$ is  $D \bar{\ast } X:=  ((D\ast X)+X)\ast X$, and that $d_D:=r(D_{\min})$.
The {\em Penrose polynomial} of  a vf-safe delta-matroid $D$ is then
\begin{equation}\label{pdefde} P(D;\lambda)  := \sum_{X\subseteq E}  (-1)^{|X|} \lambda^{d_{D\ast E \bar{\ast} X}}.  \end{equation}
It was shown in~\cite{BH12} that when the delta-matroid $D$ is a binary matroid, Equations~\eqref{pdefmq} and~\eqref{pdefde} agree.
Furthermore, our next result shows that  Penrose polynomials of matroids and delta-matroids are compatible with their ribbon graph counterparts.
\begin{theorem}\label{t.pencom}
Let $G$ be a  ribbon graph and $D(G)$ be its ribbon-graphic delta-matroid. Then
 \[P(G;\lambda) =  \lambda^{k(G)} P(D(G);\lambda).\]
\end{theorem}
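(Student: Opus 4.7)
The plan is to establish the identity
\[d_{D(G) \ast E \bar{\ast} X} = f(G^{\tau(X)}) - k(G)\]
for every $X \subseteq E(G)$. Substituting this into the definition~\eqref{pdefde} of $P(D(G);\lambda)$ and factoring out $\lambda^{-k(G)}$ from every term immediately yields the claim.

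First I would translate the delta-matroid operations on the left-hand side into ribbon graph operations. Expanding the dual pivot as $D \bar{\ast} X = ((D \ast X) + X) \ast X$ and applying Theorem~\ref{t.compat}(\ref{t.compat.3}) together with Theorem~\ref{l.plus} (equivalently, Theorem~\ref{t.td}) at each step gives
\[D(G) \ast E \bar{\ast} X = D\bigl(G^{\delta(E)\, \delta(X)\, \tau(X)\, \delta(X)}\bigr).\]

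Next I would simplify the ribbon graph via the twisted duality group relations. Writing $\delta(E) = \delta(E-X)\,\delta(X)$ (valid since partial duality can be performed one edge at a time), using the relation $\delta(X)^2 = 1$, and invoking commutativity of operations on disjoint subsets, the word collapses as
\[\delta(E)\, \delta(X)\, \tau(X)\, \delta(X) = \delta(E-X)\, \tau(X)\, \delta(X) = \tau(X)\, \delta(E-X)\, \delta(X) = \tau(X)\, \delta(E),\]
so the underlying ribbon graph is $(G^{\tau(X)})^*$.

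Finally, for any ribbon graph $H$, Theorem~\ref{t.compat}(\ref{t.compat.1}) gives $D(H)_{\min} = M(H)$ and hence $d_{D(H)} = r(M(H)) = v(H) - k(H)$. Applying this to $H = (G^{\tau(X)})^*$ and using the elementary ribbon graph facts that $v(H^*) = f(H)$ and $k(H^*) = k(H)$ for any ribbon graph, together with $k(G^{\tau(X)}) = k(G)$ (partial Petriality preserves the number of connected components), I obtain
\[d_{D(G) \ast E \bar{\ast} X} = v\bigl((G^{\tau(X)})^*\bigr) - k\bigl((G^{\tau(X)})^*\bigr) = f(G^{\tau(X)}) - k(G),\]
as required. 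The main obstacle is the algebraic simplification in the third paragraph: the group relations in the twisted duality presentation apply directly only to operations acting on identical subsets, and commutativity requires disjoint subsets, so the decomposition $E = X \sqcup (E-X)$ and careful bookkeeping of which operations act on which subsets is essential to pushing the word down to $\tau(X)\,\delta(E)$.
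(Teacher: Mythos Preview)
Your proof is correct and follows essentially the same route as the paper's own argument: translate the operations via Theorems~\ref{t.compat}(\ref{t.compat.3}) and~\ref{l.plus} (equivalently Theorem~\ref{t.td}) to obtain $D(G^{\delta(E)\delta(X)\tau(X)\delta(X)})$, simplify the twisted-duality word by decomposing $\delta(E)=\delta(E-X)\delta(X)$ and using commutativity on disjoint sets to reach $\tau(X)\delta(E)$, and then read off $d_{D((G^{\tau(X)})^*)} = f(G^{\tau(X)}) - k(G)$ from $D(H)_{\min}=M(H)$. The only differences are cosmetic: you spell out the word manipulation and the final rank computation a bit more explicitly than the paper does.
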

\begin{proof}
Let $A$ be a subset of $E(G)$.
We have
\begin{align*}
D(G)\ast E\bar{\ast}A &= D(G)\ast E\ast A +A\ast A
 = D(G^{\delta(E) \delta(A)    \tau (A) \delta(A)})
\\&= D(G^{\delta(A^c) \delta(A)  \delta(A)   \tau (A) \delta(A)})
= D(G^{\tau (A) \delta(E)}) .
\end{align*}
Then as $D(G^{\tau (A) \delta(E)})_{\min} = M(G^{\tau (A) \delta(E)})$, we have
\[  r(D(G^{\tau (A) \delta(E)})_{\min} ) =  v(G^{\tau (A) \delta(E)}) -k(G^{\tau (A) \delta(E)})=   f(G^{\tau (A)})  -k(G), \]
using that the number of vertices of a ribbon graph is equal to the number of boundary components of its dual.
The equality of the two polynomials follows.
\end{proof}

A very desirable property of a graph polynomial  is that it satisfies a recursion relation  that reduces a graph to a linear combination of ``elementary" graphs, such as isolated vertices.  The well-known deletion-contraction reduction meets this requirement in the case of the Tutte polynomial. In~\cite{EMM11a} it was shown that the Penrose polynomial of a ribbon graph admits such a relation. If $G$ is a ribbon graph, and $e\in E(G)$, then
\begin{equation}\label{e.pdc}
P(G; \lambda)=  P(G/e; \lambda) -P (G^{\tau(e)}/e; \lambda).
\end{equation}
Recall that for a ribbon graph $G$ and non-trivial ribbon subgraphs $P$ and $Q$ of $G$, we write $G=P\sqcup Q$ when $G$ is the disjoint union of $P$ and $Q$, that is,  when $G=P\cup Q$ and $P\cap Q=\emptyset$.
The preceding identity together with the multiplicativity of the Penrose polynomial, \[P(G_1\sqcup G_2)= P(G_1) \cdot P(G_2),\]   and its value $\lambda$ on an isolated vertex provides a recursive definition of the Penrose polynomial.
The Penrose polynomial of a  vf-safe delta-matroid also admits a recursive definition.
\begin{proposition}[(Brijder and Hoogeboom~\cite{BH12})]
\label{p.pdc}
Let $D=(E,{\mathcal{F}})$ be a vf-safe delta-matroid and $e\in E$.
\begin{enumerate}
 \item \label{p.pdc.1}If $e$ is a  loop, then
$P(D;\lambda) = (\lambda-1)P(D/e;\lambda)$.
\item \label{p.pdc.2}If $e$ is a trivial non-orientable ribbon loop, then
$P(D;\lambda) = -(\lambda-1)P((D+e)/e;\lambda)$.
\item \label{p.pdc.3}If $e$ is not a trivial ribbon loop, then
$P(D;\lambda)= P(D/e;\lambda) - P((D+e)/e;\lambda)$.
\item \label{p.pdc.4}If $E=\emptyset$, then $P(D;\lambda)=1$.
\end{enumerate}
\end{proposition}
 The recursion relation above for $P(D)$ replaces  $(D\bar\ast e)\ba e$ for $(D+e)/e$ in its statement in~\cite{BH12}, but it is easy to see that $(D+e)/e$ and $(D\bar\ast e)\ba e$ have exactly the same feasible sets. We have used $(D+e)/e$ rather than $(D\bar\ast e)\ba e$ to highlight the compatability with Equation~\eqref{e.pdc}.

Observe that Equation~\eqref{e.pdc} and a recursive definition for the ribbon graph version of the Penrose polynomial can be recovered as a special case of  Proposition~\ref{p.pdc} via Theorem~\ref{t.pencom}.
It is worth noting that Equation~\eqref{e.pdc} cannot be restricted  to the class of plane graphs, and analogously that   Proposition~\ref{p.pdc} cannot be restricted  to binary matroids. Thus   restricting the polynomial to either of these classes, as was historically done, limits the possibility of inductive proofs of many results. This further illustrates the advantages of the more general settings of ribbon graphs or delta-matroids.

Next, we show that the Penrose polynomial of a delta-matroid can be expressed in terms of the characteristic polynomials of associated matroids.
The \emph{characteristic polynomial}, $\chi(M;\lambda)$, of a matroid $M=(E,\mathcal{B})$ is defined by
\[ \chi(M;\lambda) := \sum_{A\subseteq E}  (-1)^{|A|} \lambda^{r(M)-r(A)}.  \]
The characteristic polynomial is known to satisfy deletion-contraction relations (see, for example,~\cite{We76}).
\begin{lemma}\label{lem:char}
Let $e$ be an element of a matroid $M$.
\begin{enumerate}
\item \label{lem:char.1}If $e$ is a loop, then $\chi(M;\lambda)=0$.
\item \label{lem:char.2}If $e$ is a coloop, then $\chi(M;\lambda) = (\lambda-1)\chi(M/e;\lambda) = (\lambda-1)\chi(M\setminus e;\lambda)$.
\item \label{lem:char.3}If $e$ is neither a loop nor a coloop, then $\chi(M;\lambda) = \chi(M\setminus e;\lambda) - \chi(M/e;\lambda)$.
\end{enumerate}
\end{lemma}
We define the \emph{characteristic polynomial}, $\chi(D;\lambda)$, of a delta-matroid $D$ to be $\chi(D_{\min};\lambda)$. When $D$ is a matroid, $D=D_{\min}$, so this definition is consistent with the definition above of the characteristic polynomial of a matroid.

To keep the notation manageable, we define $D^{\pi(A)}$ to be $(D+A)^*$.
\begin{theorem}\label{thm.pchi}
Let $D=(E,\mathcal{F})$ be a vf-safe delta-matroid. Then
\begin{equation}\label{eqn:penrose} P(D;\lambda) = \sum_{A\subseteq E} (-1)^{|A|}  \chi(D^{\pi(A)}  ;\lambda).   \end{equation}
\end{theorem}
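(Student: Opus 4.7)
The plan is to proceed by induction on $|E|$. The base case $|E|=0$ is immediate since both sides equal $1$. For the inductive step, fix $e\in E$ and split the right-hand side according to whether $e\in A$:
\begin{equation*}
\sum_{A\subseteq E}(-1)^{|A|}\chi(D^{\pi(A)};\lambda)=\sum_{A\subseteq E-e}(-1)^{|A|}\bigl[\chi((D+A)^{*};\lambda)-\chi((D+A+e)^{*};\lambda)\bigr].
\end{equation*}
The aim will be to match this, term-by-term in $A$, to the Penrose deletion--contraction of Proposition~\ref{p.pdc} evaluated on $D$, after applying the inductive hypothesis to $D/e$ and to $(D+e)/e$.

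The argument splits according to the behaviour of $e$ in $D$. If $e$ is a coloop then $D+e=D$, every bracket vanishes, and Proposition~\ref{p.pdc}(3) gives $P(D;\lambda)=0$ as well. If $e$ is a trivial orientable ribbon loop of $D$, Lemma~\ref{lem:penroseloops}(1) shows that $e$ is a coloop of $(D+A)^{*}$, while Lemma~\ref{lem:dualtwist}(3) shows that $e$ is a ribbon loop of $(D+A+e)^{*}$. Applying the coloop case of Lemma~\ref{lem:char} to the first term and the loop case to the second, and then using Lemma~\ref{lem:opsswitch} together with the identity $(D'/e)^{*}=(D')^{*}\setminus e$, rewrites each bracket as $(\lambda-1)\chi(((D/e)+A)^{*};\lambda)$; summing and invoking induction on $D/e$ yields $(\lambda-1)P(D/e;\lambda)=P(D;\lambda)$, matching Proposition~\ref{p.pdc}(1). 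The trivial non-orientable case is symmetric, using Lemma~\ref{lem:penroseloops}(2).

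In all remaining cases Proposition~\ref{p.pdc}(3) gives $P(D;\lambda)=P(D/e;\lambda)-P((D+e)/e;\lambda)$. Applying the inductive hypothesis to each summand and using Lemma~\ref{lem:opsswitch} rewrites this as $\sum_{A\subseteq E-e}(-1)^{|A|}\bigl[\chi((D+A)^{*}\setminus e;\lambda)-\chi((D+A+e)^{*}\setminus e;\lambda)\bigr]$. To finish, I would verify, for each $A$, the pointwise identity
\[\chi((D+A)^{*};\lambda)-\chi((D+A+e)^{*};\lambda)=\chi((D+A)^{*}\setminus e;\lambda)-\chi((D+A+e)^{*}\setminus e;\lambda).\]
This is done by expanding the non-vanishing characteristic polynomial with Lemma~\ref{lem:char} and collapsing the remaining terms via Lemma~\ref{lem:penrosemins}: its three sub-cases align exactly with the three possibilities for the type of $e$ in $(D+A)^{*}$ (non-trivial orientable ribbon loop, non-trivial non-orientable ribbon loop, or neither a ribbon loop nor a coloop). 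A short argument using the symmetric exchange axiom, combined with Lemma~\ref{lem:penroseloops}, shows that these types are invariant under replacing $D$ by $D+A$ for $A\subseteq E-e$, so a single sub-case of Lemma~\ref{lem:penrosemins} applies uniformly in $A$.

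The main obstacle will be keeping careful track of when the lower matroid of a delta-matroid minor agrees with the corresponding minor of the lower matroid, since Lemma~\ref{lem:char} is a matroid statement and must be applied to the right object. The key identifications are that $((D+A)^{*}\setminus e)_{\min}=((D+A)^{*})_{\min}\setminus e$ whenever $e$ is neither a loop nor a coloop of $((D+A)^{*})_{\min}$, a corresponding statement for contraction, and the lower-matroid equalities supplied directly by Lemma~\ref{lem:penrosemins}. Verifying these identifications in each sub-case is the bookkeeping that turns the plan into a complete proof.
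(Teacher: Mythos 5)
Your proposal follows the same route as the paper's proof: induction on $|E|$, splitting the sum over $A\subseteq E$ according to whether $e\in A$, dispatching the trivial-ribbon-loop cases via Lemmas~\ref{lem:dualtwist}, \ref{lem:penroseloops} and~\ref{lem:char}, and reducing the remaining cases to the pointwise identity
$\chi((D+A)^{*};\lambda)-\chi((D+A+e)^{*};\lambda)=\chi((D+A)^{*}\setminus e;\lambda)-\chi((D+A+e)^{*}\setminus e;\lambda)$,
which is exactly Equation~\eqref{eqn:penrosekey} after the identifications $(D/e)^{\pi(A)}=(D+A)^{*}\setminus e$ and $((D+e)/e)^{\pi(A)}=(D+A+e)^{*}\setminus e$. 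Peeling off the coloop case of $D$ at the start, rather than absorbing it into the first sub-case of the general analysis as the paper does, is a harmless reorganisation: it is precisely what makes your three sub-cases for the type of $e$ in $(D+A)^{*}$ exhaustive, since by Lemma~\ref{lem:penroseloops} and its companions the conditions ``$e$ is a coloop of $(D+A)^{*}$'', ``$e$ is a delta-matroid loop of $(D+A)^{*}$'' and ``$e$ is a trivial non-orientable ribbon loop of $(D+A)^{*}$'' pull back to $e$ being a trivial orientable ribbon loop, a coloop, or a trivial non-orientable ribbon loop of $D$ respectively, all of which you have excluded.

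There is, however, one concretely false step: the claim that the type of $e$ in $(D+A)^{*}$ is invariant as $A$ ranges over subsets of $E-e$, so that ``a single sub-case of Lemma~\ref{lem:penrosemins} applies uniformly in $A$.'' Only the three conditions listed above are invariant in $A$; the distinction between your three sub-cases is not. Take $D_{0}=(\{a,e\},\{\emptyset,\{a\},\{e\}\})$, the binary (hence vf-safe) delta-matroid represented by the all-ones symmetric $2\times 2$ matrix over $GF(2)$; here $e$ is a non-trivial non-orientable ribbon loop of $D_{0}$, so $D_{0}$ lands in your ``remaining cases''. The feasible sets of $D_{0}^{*}$ are $\{a\},\{e\},\{a,e\}$, so $e$ lies in a minimum feasible set and is neither a ribbon loop nor a coloop of $D_{0}^{*}$; but $D_{0}+a$ has feasible sets $\emptyset,\{e\},\{a,e\}$, so $(D_{0}+a)^{*}$ has feasible sets $\{a,e\},\{a\},\emptyset$ and $e$ is a (non-trivial orientable) ribbon loop of $(D_{0}+a)^{*}$. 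Hence the applicable sub-case of Lemma~\ref{lem:penrosemins} genuinely depends on $A$, and no argument from the symmetric exchange axiom will give the uniformity you assert. The error is not fatal --- nothing in your outline actually requires uniformity, and verifying Equation~\eqref{eqn:penrosekey} separately for each $A$, with the sub-case determined by that particular $A$, is exactly what the paper does --- but as written this step of your proof would fail and must be replaced by the per-$A$ case analysis.
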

\begin{proof}Throughout the proof, we make frequent use of the fact that if $e$ is an element of a delta-matroid $D$, then $(D*e)/e=D\setminus e$ and $(D*e)\setminus e=D/ e$.
The proof proceeds by induction on the number of elements of $E$. If $E=\emptyset$ then both sides of Equation~\eqref{eqn:penrose} are equal to 1. So assume that $E\ne \emptyset$ and let $e\in E$.

We have
\begin{equation} \label{eq:pen} \sum_{A\subseteq E} (-1)^{|A|} \chi(D^{\pi(A)};\lambda) = \sum_{A\subseteq E-e} (-1)^{|A|} \chi(D^{\pi(A)};\lambda) - \sum_{A\subseteq E-e} (-1)^{|A|} \chi(D^{\pi(A \cup e)};\lambda).\end{equation}
Suppose that $e$ is a loop of $D$.
Then
by Lemmas~\ref{lem:dualtwist}\ref{lem:dualtwist.3} and~\ref{lem:penroseloops}\ref{lem:penroseloops.1}, $e$ is a coloop in $D^{\pi(A)}$ and a trivial non-orientable ribbon loop in $D^{\pi(A \cup e)}$.
So for each $A\subseteq E-e$,
$\chi(D^{\pi(A \cup e)};\lambda)=0$ by Lemma~\ref{lem:char}\ref{lem:char.1}, and by Lemma~\ref{lem:char}\ref{lem:char.2}
\[
\chi(D^{\pi(A)};\lambda) =  (\lambda-1) \chi(((D^{\pi(A)})_{\min})/e;\lambda).
\]
Since $e$ is a coloop in $D^{\pi(A)}$,
we have $(D^{\pi(A)})_{\min}/e = (D^{\pi(A)}/e)_{\min} = ((D+A)^*/e)_{\min}$.
Using Lemma~\ref{lem:opsswitch},
\[((D+A)^*/e)_{\min}= ((D\ba e +A)^*)_{\min} = ((D/e +A)^*)_{\min} = ((D/e)^{\pi(A)})_{\min},\]
where the penultimate equality holds because $e$ is a loop in $D$ and hence $D\ba e= D/e$.
Therefore $\chi(((D^{\pi(A)})_{\min})/e;\lambda) = \chi((D/e)^{\pi(A)};\lambda)$.
Hence \[\sum_{A\subseteq E} (-1)^{|A|} \chi(D^{\pi(A)};\lambda) = (\lambda-1)\sum_{A\subseteq E-e} (-1)^{|A|}\chi((D/e)^{\pi(A)};\lambda).\] Using induction and Proposition~\ref{p.pdc}\ref{p.pdc.1}, this equals $(\lambda-1)P(D/e;\lambda) = P(D;\lambda)$.

Next suppose that $e$ is a trivial non-orientable ribbon loop of $D$. Then
by Lemmas~\ref{lem:dualtwist}\ref{lem:dualtwist.3} and~\ref{lem:penroseloops}\ref{lem:penroseloops.2}, $e$ is a trivial non-orientable ribbon loop in $D^{\pi(A)}$ and a coloop in $D^{\pi(A \cup e)}$. So for each $A\subseteq E-e$, using Lemma~\ref{lem:char}\ref{lem:char.2} and Lemma~\ref{lem:opsswitch},
\begin{align*}    \chi(D^{\pi(A \cup e)};\lambda)  &= (\lambda-1) \chi(((D^{\pi(A \cup e)})_{\min})/e;\lambda)= (\lambda-1)\chi(((D+e)\setminus e)^{\pi(A)};\lambda)\\ &= (\lambda-1)\chi(((D+e)/e)^{\pi(A)};\lambda), \end{align*}
where the last equality follows from the fact that $e$ is a loop of $D+e$.
Furthermore, $\chi(D^{\pi(A)};\lambda)=0$,  by Lemma~\ref{lem:char}\ref{lem:char.1}.
Hence \[\sum_{A\subseteq E} (-1)^{|A|} \chi(D^{\pi(A)};\lambda) = -(\lambda-1)\sum_{A\subseteq E-e} (-1)^{|A|} \chi(((D+e)/e)^{\pi(A)};\lambda).\] Using induction and Proposition~\ref{p.pdc}\ref{p.pdc.2},
this equals $-(\lambda-1)P((D+e)/e;\lambda) = P(D;\lambda)$.

We have covered the cases where $e$ is a trivial ribbon loop in $D$. So now we assume that this is not the case. Using induction,  Proposition~\ref{p.pdc}\ref{p.pdc.3}, and Lemma~\ref{lem:opsswitch} we have
\begin{align*}
P(D;\lambda) &= P(D/e;\lambda) - P((D+e)/e;\lambda) \\ &=\sum_{A\subseteq E-e} (-1)^{|A|} \chi((D/e)^{\pi(A)};\lambda) - \sum_{A\subseteq E-e} (-1)^{|A|} \chi(((D+e)/e)^{\pi(A)};\lambda).\end{align*}
We will show that for each $A\subseteq E-e$
\begin{equation} \label{eqn:penrosekey} \chi((D/e)^{\pi(A)};\lambda) - \chi(((D+e)/e)^{\pi(A)};\lambda) = \chi(D^{\pi(A)};\lambda) - \chi(D^{\pi(A\cup e)};\lambda),\end{equation}
which, combined with Equation~\eqref{eq:pen}, will be enough to complete the proof of the theorem. There are four cases depending on the role of $e$ in $D^{\pi(A)}$.

First, suppose that $e$ is a  loop in $D^{\pi(A)}$.
Now by Lemma~\ref{lem:opsswitch}, $(D/e)+A = (D+A)/e$ and $((D+e)/e)+A = (D+A+e)/e$. Moreover, $e$ is a coloop in $D+A$ and so $D+A+e=D+A$. Hence
\[\chi((D/e)^{\pi(A)};\lambda) = \chi(((D+e)/e)^{\pi(A)};\lambda) \qquad \text{and} \qquad \chi(D^{\pi(A)};\lambda) = \chi(D^{\pi(A\cup e)};\lambda).\] Therefore Equation~\eqref{eqn:penrosekey} holds.

Second, suppose that $e$ is a non-trivial orientable ribbon loop in $D^{\pi(A)}$. Then by Lemma~\ref{lem:dualtwist}\ref{lem:dualtwist.1}, $e$ is also a non-trivial orientable ribbon loop in $D^{\pi(A\cup e)}$. So $e$ is a ribbon loop of both $D^{\pi(A)}$ and $D^{\pi(A\cup e)}$. Consequently $\chi(D^{\pi(A)};\lambda) = \chi(D^{\pi(A\cup e)};\lambda)=0$. On the other hand, by Lemma~\ref{lem:opsswitch}, $(D/e)^{\pi(A)} = (D^{\pi(A)})\ba e$ and $((D+e)/e)^{\pi(A)} = (D^{\pi(A\cup e)})\ba e$. Applying Lemma~\ref{lem:penrosemins}\ref{lem:penrosemins.1} with $D$ replaced by $D+A$ shows that
$\chi(D^{\pi(A)}\setminus e;\lambda) = \chi((D+e)^{\pi(A)}\setminus e;\lambda)$ and hence
$\chi((D/e)^{\pi(A)};\lambda) = \chi(((D+e)/e)^{\pi(A)};\lambda)$. Therefore Equation~\eqref{eqn:penrosekey} holds.

Third, suppose that $e$ is neither a coloop nor a ribbon loop in $D^{\pi(A)}$. Then by Lemma~\ref{lem:dualtwist}\ref{lem:dualtwist.4}, $e$ is a non-trivial non-orientable ribbon loop in $D^{\pi(A\cup e)}$. Therefore $\chi(D^{\pi(A\cup e)};\lambda)=0$.
By Lemma~\ref{lem:char}\ref{lem:char.3}, we have
\[\chi(D^{\pi(A)};\lambda) = \chi(((D^{\pi(A)})_{\min})\ba e;\lambda) - \chi(((D^{\pi(A)})_{\min})/ e;\lambda).\]
We have $((D^{\pi(A)})_{\min})\ba e = ((D^{\pi(A)})\ba e)_{\min}$. Using Lemma~\ref{lem:opsswitch} this is in turn equal to $((D/e)^{\pi(A)})_{\min}$.
Consequently $\chi(((D^{\pi(A)})_{\min})\ba e;\lambda) = \chi((D/e)^{\pi(A)};\lambda)$. Because $e$ is not a ribbon loop in $D^{\pi(A)}$, $((D^{\pi(A)})_{\min})/e = ((D^{\pi(A)})/e)_{\min}$. By Lemma~\ref{lem:penrosemins}\ref{lem:penrosemins.2} this equals $((D^{\pi(A)}+e)\ba e)_{\min}$ which by Lemma~\ref{lem:opsswitch} equals $(((D+e)/e)^{\pi(A)})_{\min}$. Hence $\chi(((D^{\pi(A)})_{\min})/e;\lambda) = \chi(((D+e)/e)^{\pi(A)};\lambda)$ and Equation~\eqref{eqn:penrosekey} follows.

The final case is similar. Suppose that $e$ is a non-trivial non-orientable ribbon loop in $D^{\pi(A)}$. Then by Lemma~\ref{lem:dualtwist}\ref{lem:dualtwist.4} applied to $D+A+e$, $e$ is neither a coloop nor a ribbon loop in $D^{\pi(A\cup e)}$. We have $\chi(D^{\pi(A)};\lambda)=0$.
By Lemma~\ref{lem:char}\ref{lem:char.3}, we have
\[\chi(D^{\pi(A\cup e)};\lambda) = \chi(((D^{\pi(A\cup e)})_{\min})\ba e;\lambda) - \chi(((D^{\pi(A\cup e)})_{\min})/ e;\lambda).\]
We have $((D^{\pi(A\cup e)})_{\min})\ba e = ((D^{\pi(A\cup e)})\ba e)_{\min}$. Using Lemma~\ref{lem:opsswitch} this is in turn equal to $(((D+e)/e)^{\pi(A)})_{\min}$.
Consequently $\chi(((D^{\pi(A\cup e)})_{\min})\ba e;\lambda) = \chi(((D+e)/e)^{\pi(A)};\lambda)$. Because $e$ is not a ribbon loop in $D^{\pi(A\cup e)}$, $((D^{\pi(A\cup e)})_{\min})/e = ((D^{\pi(A\cup e)})/e)_{\min}$. By Lemma~\ref{lem:penrosemins}\ref{lem:penrosemins.3} this equals $(D^{\pi(A)}\ba e)_{\min}$. Using Lemma~\ref{lem:opsswitch}, this equals $((D/e)^{\pi(A)})_{\min}$. Hence $\chi(((D^{\pi(A\cup e)})_{\min})/e;\lambda) = \chi((D/e)^{\pi(A)};\lambda)$ and Equation~\eqref{eqn:penrosekey} follows.

Therefore the result follows by induction.
\end{proof}

If $G$ is a graph or ribbon graph and $M(G)$ its cycle matroid, then $\chi(M(G);\lambda)=\lambda^{-k(G)} \chi(G;\lambda)$, where the $\chi$ on the right-hand side refers to the chromatic polynomial of $G$. Combining this fact with Theorem~\ref{t.pencom} allows us to recover the following result as a special case of Theorem~\ref{thm.pchi}.
\begin{corollary}[(Ellis-Monaghan and Moffatt~\cite{EMM11a,EMM11b})]
Let $G$ be a ribbon graph. Then
\[  P(G;\lambda) = \sum_{A\subseteq E(G)}  (-1)^{ |A|}  \chi ((   G^{\tau(A)}   )^*   ;\lambda)  ,\]
where  $\chi (H;\lambda)$ denotes the chromatic polynomial of $H$.
\end{corollary}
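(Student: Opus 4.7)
My plan is to treat this as a direct translation of Theorem~\ref{thm.pchi} applied to the ribbon-graphic delta-matroid $D(G)$, using the compatibility results of Section~2 and Section~4 to rewrite each ingredient on the delta-matroid side in graph-theoretic terms. Specifically, I start from
\[ P(D(G);\lambda) = \sum_{A\subseteq E(G)} (-1)^{|A|} \chi\bigl(D(G)^{\pi(A)};\lambda\bigr), \]
and my goal is to identify $\chi(D(G)^{\pi(A)};\lambda)$ with $\lambda^{-k(G)}\chi((G^{\tau(A)})^*;\lambda)$. Once this identification is made, Theorem~\ref{t.pencom} converts the left-hand side into $\lambda^{-k(G)} P(G;\lambda)$, and a single factor of $\lambda^{k(G)}$ clears to give the claimed identity.

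The core of the argument is the computation of $D(G)^{\pi(A)}$. By definition $D(G)^{\pi(A)} = (D(G)+A)^*$. Theorem~\ref{l.plus} says $D(G)+A = D(G^{\tau(A)})$, and Theorem~\ref{t.compat}\eqref{t.compat.3} says that delta-matroid dualisation agrees with geometric duality, so
\[ D(G)^{\pi(A)} = D(G^{\tau(A)})^* = D\bigl((G^{\tau(A)})^*\bigr). \]
Then $\chi$ of this delta-matroid is, by definition, $\chi$ of its lower matroid, which by Theorem~\ref{t.compat}\eqref{t.compat.1} is the graphic matroid $M((G^{\tau(A)})^*)$.

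To finish I use the standard relation $\chi(M(H);\lambda) = \lambda^{-k(H)} \chi(H;\lambda)$ between the characteristic polynomial of a graphic matroid and the chromatic polynomial of the graph. Applied to $H = (G^{\tau(A)})^*$ this gives
\[ \chi\bigl(D(G)^{\pi(A)};\lambda\bigr) = \lambda^{-k((G^{\tau(A)})^*)} \chi\bigl((G^{\tau(A)})^*;\lambda\bigr). \]
The only bookkeeping issue, and the one place to be careful, is the exponent: I need $k((G^{\tau(A)})^*) = k(G)$ so that this exponent is constant across the sum and cancels cleanly with the $\lambda^{k(G)}$ coming from Theorem~\ref{t.pencom}. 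This holds because geometric duality preserves the number of connected components and partial Petriality is a local modification at each edge of $A$ that changes neither vertices, edges, nor the incidence structure between components, so $k(G^{\tau(A)}) = k(G)$ and hence $k((G^{\tau(A)})^*) = k(G)$.

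Pulling $\lambda^{-k(G)}$ out of the sum and multiplying both sides by $\lambda^{k(G)}$ using $P(G;\lambda) = \lambda^{k(G)} P(D(G);\lambda)$ then yields the stated expression. The main obstacle is really just the exponent bookkeeping described above; once one verifies that $k$ is preserved by $\tau$ and by geometric duality, the identity falls out of Theorems~\ref{t.compat}, \ref{l.plus}, \ref{t.pencom}, and \ref{thm.pchi} as a formal substitution.
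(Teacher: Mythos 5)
Your proposal is correct and follows exactly the route the paper takes: it derives the corollary as a special case of Theorem~\ref{thm.pchi} by identifying $D(G)^{\pi(A)}=D((G^{\tau(A)})^*)$ via Theorems~\ref{l.plus} and~\ref{t.compat}, converting characteristic to chromatic polynomials with $\chi(M(H);\lambda)=\lambda^{-k(H)}\chi(H;\lambda)$, and cancelling the $\lambda^{k(G)}$ prefactor from Theorem~\ref{t.pencom}. The paper compresses all of this into a single sentence, so your write-up simply supplies the bookkeeping (in particular that $k$ is preserved under $\tau$ and geometric duality) that the authors left implicit.
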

In fact, in keeping with the spirit of this paper, it was the existence of this ribbon graph polynomial identity that led us to formulate Theorem~\ref{thm.pchi}.

\medskip

We end this paper with some results regarding the transition polynomial.
As observed by Jaeger in~\cite{Ja90}, the Penrose polynomial of a plane graph arises as a specialization of the transition polynomial, $q(G; W,t)$.  Ellis-Monaghan and Moffatt introduced a version of the transition polynomial for ribbon graphs (or, equivalently,  embedded graphs), called the topological transition polynomial, in~\cite{EMM}.  This polynomial provides a general framework for understanding the Penrose polynomial $P(G)$ and the ribbon graph polynomial $R(G)$ as well as some knot and virtual knot polynomials.

Let $E$ be a set. We define $\mathcal{P}_3(E) :=  \{  (E_1,E_2,E_3)  :   E=E_1\cup E_2 \cup E_3, E_i \cap E_j=\emptyset \text{ for each }  i\neq j   \}$. That is, $\mathcal{P}_3(E)$ is the set of ordered partitions of $E$ into three, possibly empty, blocks.

Let $G$ be a ribbon graph and $R$ be a commutative ring with unity. A {\em weight system} for $G$, denoted $(\boldsymbol\alpha, \boldsymbol\beta, \boldsymbol\gamma)$, is a set of ordered triples of elements in $R$ indexed by $E=E(G)$. That is,
$(\boldsymbol\alpha, \boldsymbol\beta, \boldsymbol\gamma)  :=
 \{ (\alpha_e, \beta_e, \gamma_e) :  e\in E, \text{ and }\alpha_e, \beta_e, \gamma_e\in R\}$.
   The \emph{topological transition polynomial}, $Q(G, (\boldsymbol\alpha, \boldsymbol\beta, \boldsymbol\gamma) , t) \in R[t] $, is defined by
   \begin{equation}\label{topotransdef}
Q(G, (\boldsymbol\alpha, \boldsymbol\beta, \boldsymbol\gamma) , t) :=\sum_{(A,B,C) \in \mathcal{P}_3( E(G))}    \Big( \prod_{e\in A}\alpha_e\Big) \Big(  \prod_{e\in B}  \beta_e\Big) \Big( \prod_{e\in C}   \gamma_e\Big)  t^{f( G^{\tau(C)}\setminus B)}.
\end{equation}

If, in the set of ordered triples $(\boldsymbol\alpha, \boldsymbol\beta, \boldsymbol\gamma)$, we have $(\alpha_e, \beta_e, \gamma_e)=(\alpha, \beta, \gamma)$ for all $e\in E(G)$, then we write $(\alpha, \beta, \gamma)$ in place of $(\boldsymbol\alpha, \boldsymbol\beta, \boldsymbol\gamma)$.

On the delta-matroid side, in~\cite{BHpre2} the transition polynomial of a vf-safe delta-matroid was introduced. Suppose $D=(E,{\mathcal{F}})$ is a delta-matroid. A weight system $(\boldsymbol\alpha, \boldsymbol\beta, \boldsymbol\gamma)$ for $D$ is defined just as it was for ribbon graphs above.
Then the {\em transition polynomial} of a vf-safe delta-matroid is defined by
   \begin{equation}\label{deltatransdef}  Q_{(\boldsymbol\alpha, \boldsymbol\beta, \boldsymbol\gamma)}(D;t)  =   \sum_{(A,B,C) \in \mathcal{P}_3( E)}    \Big( \prod_{e\in A}\alpha_e\Big) \Big(  \prod_{e\in B}  \beta_e\Big) \Big( \prod_{e\in C}   \gamma_e\Big) t^{d_{D\ast B \bar{\ast} C}}.   \end{equation}
Again we use the notation $(\alpha, \beta, \gamma)$ to denote the weight system in which each  $e\in E$ has weight $(\alpha, \beta, \gamma)$.

We will need a twisted duality relation for the transition polynomial. In order to state this relation, we  introduce a little notation.
Let  $(\boldsymbol\alpha, \boldsymbol\beta, \boldsymbol\gamma) =  \{(\alpha_e, \beta_e, \gamma_e): {e\in E}\}  $ be a weight system for a delta-matroid $D=(E,{\mathcal{F}})$, and let $A\subseteq E$. Define $(\boldsymbol\alpha, \boldsymbol\beta, \boldsymbol\gamma)\ast A$ to be the  weight system
 $\{(\alpha_e, \beta_e, \gamma_e): e\in E\ba A \} \cup   \{( \beta_e,\alpha_e, \gamma_e): e\in A\}$.  Also define, $(\boldsymbol\alpha, \boldsymbol\beta, \boldsymbol\gamma)+ A$ to be the  weight system  $\{(\alpha_e, \beta_e, \gamma_e): e\in E\ba A\} \cup   \{( \alpha_e, \gamma_e,\beta_e) : e\in A\}$.  For each word $w=w_1\ldots w_n \in  \langle * , +  \mid *^2, +^2, (*+)^3 \rangle$ define
\[(\boldsymbol\alpha, \boldsymbol\beta, \boldsymbol\gamma) w (A):= (\boldsymbol\alpha, \boldsymbol\beta, \boldsymbol\gamma) w_1 (A) w_{2}(A)\cdots w_n(A)  .\]
\begin{theorem}[(Brijder and Hoogeboom~\cite{BHpre2})]
\label{t.tdrandual}
 Let $D=(E,{\mathcal{F}})$ be a vf-safe delta-matroid.
If    $A_1, \ldots ,A_n \subseteq E$, and $g_1,\ldots, g_n \in \langle \ast ,+ \mid \ast^2, +^2, (\ast+)^3 \rangle$, and $\Gamma=  g_1(A_1) g_2(A_2)\cdots g_n(A_n)$, then
\[
Q_{(\boldsymbol\alpha, \boldsymbol\beta, \boldsymbol\gamma)}(D; t)=Q_{(\boldsymbol\alpha, \boldsymbol\beta, \boldsymbol\gamma)\Gamma}((D)\Gamma; t).
\]
 \end{theorem}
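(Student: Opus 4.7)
The plan is to reduce the identity to a single-letter step on a single element and then verify that step via an explicit bijection on $\mathcal{P}_3(E)$. Since every element of $\mathfrak{S}$ is a product of the generators $\ast$ and $+$, we may assume each $g_i \in \{\ast,+\}$, and an induction on $n$ then reduces the claim to $\Gamma = g(A)$ for a single letter $g$ and a single set $A \subseteq E$. Iterating the elementary decomposition $Dg(A) = (Dg(\{e\}))g(A-\{e\})$ and using that actions on disjoint element-sets commute, we reduce further to the two base cases $\Gamma = \ast\{e\}$ and $\Gamma = +\{e\}$ for a single $e \in E$.

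In each base case I would define a bijection $\phi:\mathcal{P}_3(E)\to\mathcal{P}_3(E)$ that acts as the identity on every $f\neq e$ and transposes the two blocks affected by $g$ at $e$: for $\Gamma=\ast\{e\}$, $\phi$ swaps whether $e$ lies in the first or second block, and for $\Gamma=+\{e\}$ it swaps the second and third. Because the weight-system action $(\boldsymbol\alpha,\boldsymbol\beta,\boldsymbol\gamma)g(\{e\})$ is defined by exactly the same transposition of the entries at $e$, the monomial weight of $\phi(A,B,C)$ under the transformed weight system matches that of $(A,B,C)$ under the original. Thus the weight factors pair off term-by-term and the identity reduces to showing
\begin{equation}\label{eq:texpmatch}
d_{D\ast B \bar{\ast} C} = d_{(Dg(\{e\}))\ast B' \bar{\ast} C'}
\end{equation}
for every $(A,B,C)\in\mathcal{P}_3(E)$, where $(A',B',C') := \phi(A,B,C)$.

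The verification of \eqref{eq:texpmatch} splits into three sub-cases according to which block of $(A,B,C)$ contains $e$. In the two sub-cases where $\phi$ moves $e$'s block, a short calculation using $\ast\{e\}\ast\{e\} = \mathrm{id}$ together with the disjointness $e\notin B$ (or $e\in B$) shows that the two delta-matroids inside $d$ in \eqref{eq:texpmatch} are literally equal. The main obstacle is the remaining sub-case where $\phi$ fixes $e$'s block (for $\Gamma=\ast\{e\}$ this is $e\in C$, and for $\Gamma=+\{e\}$ it is $e\in A$). Here I would use $\bar{\ast}C = \bar{\ast}\{e\}\,\bar{\ast}(C-\{e\})$, commute the $\bar{\ast}(C-\{e\})$ factor past all operations on $\{e\}$ (permissible since their set arguments are disjoint), and set $\hat D := D\ast B\,\bar{\ast}(C-\{e\})$. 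The equality \eqref{eq:texpmatch} then reduces to comparing two specific members of the six-element $\mathfrak{S}_3$-orbit of $\hat D$ under the $\{e\}$-action. Theorem~\ref{thm:minstuff} controls how $d$ is distributed on such an orbit --- among $d_{\hat D}$, $d_{\hat D\ast\{e\}}$, $d_{\hat D\bar{\ast}\{e\}}$ exactly two agree and the third is larger by one --- and applying it in turn to $\hat D$, $\hat D\ast\{e\}$, and $\hat D+\{e\}$, together with the relations $\ast^2=+^2=(\ast+)^3=1$ that let us rewrite compositions such as $\ast\{e\}\,\bar{\ast}\{e\} = +\{e\}\,\ast\{e\}$, forces the two targeted $d$-values to coincide. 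The $+\{e\}$ base case is formally parallel via the outer symmetry of $\mathfrak{S}_3$ swapping $\ast$ and $\bar{\ast}$.
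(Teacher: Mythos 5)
The paper does not actually prove Theorem~\ref{t.tdrandual} (it is quoted from~\cite{BHpre2}), so your argument has to stand on its own. The skeleton is reasonable: reducing to a single letter acting on a single element, pairing the summands of $\mathcal{P}_3(E)$ by the transposition that mirrors the weight-system action, and checking that the exponents agree on paired summands is a legitimate route, and your weight bookkeeping and the genuinely ``literal equality'' sub-cases (for $\Gamma=\ast\{e\}$ with $e\in A$ or $e\in B$) are fine. But the decisive step fails. Every remaining sub-case --- including, contrary to your claim, the moved sub-cases of the $+\{e\}$ base case, where the two delta-matroids differ by a trailing $+e$ rather than being equal --- reduces to the single assertion $d_{N+e}=d_N$ for a suitable $N$ in the $\langle\ast e,+e\rangle$-orbit of $\hat D$ (e.g.\ your $\ast\{e\}$, $e\in C$ case needs $d_{\hat D\bar\ast e}=d_{(\hat D\bar\ast e)+e}$, since $\hat D+e\ast e=(\hat D\bar\ast e)+e$). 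You propose to force this from the distributional part of Theorem~\ref{thm:minstuff}, but that information is insufficient even if the theorem is applied to all six members of the orbit. Writing $s=\ast e$, $t=+e$, $w=sts=\bar\ast e$, the assignment
\[
d_{\hat D}=0,\quad d_{\hat Ds}=1,\quad d_{\hat Dt}=1,\quad d_{\hat D(st)}=0,\quad d_{\hat D(ts)}=0,\quad d_{\hat Dw}=1
\]
satisfies ``two values equal and the third larger by one'' on each of the six triples $\{d_N,d_{N\ast e},d_{N\bar\ast e}\}$, yet has $d_{\hat D\bar\ast e}=1\neq 0=d_{\hat D+e\ast e}$. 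So the constraints you invoke are consistent with the failure of the identity you need; no amount of juggling them will close the argument.

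The missing ingredient is the elementary fact that loop complementation never changes the minimum feasible-set size: for any proper set system $N$ and any $e$ one has $\mathcal{F}_{\min}(N+e)=\mathcal{F}_{\min}(N)$, hence $d_{N+e}=d_N$. Indeed every set added in forming $N+e$ has the form $F\cup e$ with $F$ feasible and $e\notin F$, so has size at least $d_N+1$; and no minimum-size feasible set is removed, since such a set containing $e$ could only be toggled if $F-e$ were feasible (contradicting minimality), while one avoiding $e$ never lies in the toggled family. (This is the observation opening the proof of Lemma~\ref{l.rloopstwist}, stated there only for ribbon loops but valid verbatim in general.) With that one lemma in hand, all of your non-trivial sub-cases close at once and the rest of the reduction goes through; as written, however, the crux of your proof is a non sequitur.
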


Once again we see that a delta-matroid polynomial is compatible with a ribbon graph polynomial.
\begin{proposition}\label{prop.qd}
Let $G=(V,E)$ be a  ribbon graph and $D(G)$ be its ribbon-graphic delta-matroid. Then
   \[Q(G; (\boldsymbol\alpha, \boldsymbol\beta, \boldsymbol\gamma),t)=  t^{k(G)} Q_{(\boldsymbol\beta, \boldsymbol\alpha, \boldsymbol\gamma)}(D(G) ;  t).\]
\end{proposition}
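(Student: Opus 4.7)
The plan is to match the two sums term-by-term over ordered partitions $(A,B,C)\in\mathcal{P}_3(E(G))$. Since swapping $\boldsymbol\alpha$ and $\boldsymbol\beta$ in the weight system interchanges the roles of $A$ and $B$ in the definition of $Q_{(\boldsymbol\beta,\boldsymbol\alpha,\boldsymbol\gamma)}(D(G);t)$, the claimed identity reduces to showing, for every partition $(A,B,C)$ of $E(G)$,
\[
f(G^{\tau(C)}\setminus B) \;=\; k(G)+d_{D(G)\ast A\,\bar{\ast}\, C}.
\]

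First I would realise the right-hand delta-matroid as a ribbon-graphic one. Unpacking $\bar{\ast}$ using $(D\ast X)\ast Y = D\ast(X\triangle Y)$ together with $A\cup C = E\setminus B$ gives
\[
D(G)\ast A\,\bar{\ast}\, C \;=\; \bigl(D(G)^{*}\ast B + C\bigr)\ast C.
\]
Iteratively applying Theorem~\ref{t.compat}(3) and Theorem~\ref{l.plus} then identifies this delta-matroid with $D(H)$, where
\[
H \;=\; \bigl(((G^{*})^{\delta(B)})^{\tau(C)}\bigr)^{\delta(C)}.
\]
Since $D(H)_{\min}=M(H)$ by Theorem~\ref{t.compat}(1), we have $d_{D(G)\ast A\,\bar{\ast}\, C}=v(H)-k(H)$. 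Partial duality and partial Petriality each preserve the number of connected components of a ribbon graph, so $k(H)=k(G)$ and therefore $k(G)+d_{D(G)\ast A\,\bar{\ast}\, C}=v(H)$.

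I would then rewrite the left-hand side using the standard fact that the vertices of a partial dual $L^{\delta(X)}$ are in bijection with the boundary components of the spanning ribbon subgraph $(V(L),X)$. Applying this to $L=G^{\tau(C)}$ with $X = A\cup C = E\setminus B$ yields
\[
f(G^{\tau(C)}\setminus B) \;=\; v\bigl(G^{\tau(C)\,\delta(A\cup C)}\bigr).
\]
It therefore suffices to prove $v(H)=v(G^{\tau(C)\,\delta(A\cup C)})$. In twisted-duality notation $H=G^{\delta(E)\delta(B)\tau(C)\delta(C)}$. Using $\delta(X)\delta(Y)=\delta(X\triangle Y)$ gives $\delta(E)\delta(B)=\delta(A\cup C)=\delta(A)\delta(C)$; the braid relation $\delta(C)\tau(C)\delta(C)=\tau(C)\delta(C)\tau(C)$ in $\mathfrak{G}$ together with the commutation $\delta(A)\tau(C)=\tau(C)\delta(A)$ (valid since $A\cap C=\emptyset$) then rewrite
\[
H \;=\; G^{\delta(A)\delta(C)\tau(C)\delta(C)} \;=\; G^{\tau(C)\,\delta(A\cup C)\,\tau(C)} \;=\; \bigl(G^{\tau(C)\,\delta(A\cup C)}\bigr)^{\tau(C)}.
\]
Because partial Petriality only adds a half-twist to edges and does not alter the vertex set, $v(H)=v(G^{\tau(C)\,\delta(A\cup C)})$, as required.

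The main obstacle is the bookkeeping inside the twisted-duality group: one must track how $\bar{\ast}$ on the delta-matroid side translates, via the homomorphism of Theorem~\ref{t.td}, into a composition of partial duals and partial Petrials, and then recognise that the vertex count of the resulting ribbon graph coincides with $f(G^{\tau(C)}\setminus B)$ via the partial-dual-vertex/boundary-component correspondence. The braid relation $\delta\tau\delta=\tau\delta\tau$ paired with the $\tau$-invariance of the vertex set is precisely what produces this identification.
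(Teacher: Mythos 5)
Your proof is correct and takes essentially the same route as the paper's: both reduce the identity to the term-by-term exponent equation $f(G^{\tau(C)}\setminus B)=k(G)+d_{D(G)\ast A\bar{\ast}C}$ (the paper's $d_{D(G)\ast E\ast B\bar{\ast}C}$ agrees with yours since a trailing $+C$ does not change the lower matroid), and then use Theorem~\ref{t.compat}(3) and Theorem~\ref{l.plus} to realise the twisted dual of $D(G)$ as $D(H)$ for an explicit twisted dual $H$ of $G$, finishing with word manipulations in $\mathfrak{G}$. The only cosmetic differences are that you match terms by direct relabelling of the partition rather than invoking Theorem~\ref{t.tdrandual}, and you conclude via $v(L^{\delta(X)})=f((V(L),X))$ together with the $\tau$-invariance of the vertex set, where the paper instead uses $v(H)=f(H^{\delta(E)})$ and $G^{\tau(C)\delta(B)}/B=G^{\tau(C)}\setminus B$.
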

\begin{proof}
By Theorem~\ref{t.tdrandual},
$t^{k(G)} Q_{(\boldsymbol\beta, \boldsymbol\alpha, \boldsymbol\gamma)}(D(G) ;  t) =
t^{k(G)} Q_{(\boldsymbol\alpha, \boldsymbol\beta, \boldsymbol\gamma)}(D(G)\ast E) ;  t) $.
Then, by comparing Equation~\eqref{topotransdef} and Equation~\eqref{deltatransdef} we see that to prove the theorem it is to sufficient to prove that
$  d_{D(G)\ast E\ast B\bar{\ast}C} + k(G) = f(G^{\tau(C)} \ba B)    $ for all disjoint subsets $B$ and $C$ of $E(G)$. Theorem~\ref{t.td} and the properties of twisted duals from Section~\ref{4th} give
\[D(G)\ast E\ast B\bar{\ast}C
 = D(G^{\delta(E) \delta(B) \delta \tau\delta(C)})
= D(G^{\delta(A)\tau\delta(C)}),
\]
where $A=E-(B\cup C)$.
Then using that $r(D(G)_{\min})= v(G)-k(G)$, the  properties of twisted duals once again and that contracting an edge from a ribbon graph does not change its number of boundary components
\begin{align*}
d_{D(G)\ast E\ast B\bar{\ast}C} &= r((D(G)\ast E\ast B\bar{\ast}C)_{\min})  = r(D(  G^{\delta(A)\tau\delta(C)} )_{\min})
\\
&= v(  G^{\delta(A)\tau\delta(C)} ) - k( G^{\delta(A)\tau\delta(C)} )
=f(  G^{\delta(A)\tau\delta(C) \delta(E) } ) - k( G)
\\
&= f(G^{\tau(C) \delta(B) })- k( G)= f(G^{\tau(C)\delta(B)}/B)- k(G)= f(G^{\tau(C)} \ba B )- k( G),
 \end{align*}
as required.
\end{proof}

We note that the twisted duality relation for the ribbon graph version of the transition polynomial from~\cite{EMM} can be recovered from Proposition~\ref{prop.qd} and Theorem~\ref{t.tdrandual}.

In~\cite{BHpre2}, Brijder and Hoogeboom give a recursion relation (and a recursive definition) for the transition polynomial of a vf-safe delta-matroid, which we now reformulate.
\begin{theorem}[(Brijder and Hoogeboom~\cite{BHpre2})]\label{t.dqdct}
Let $D=(E,{\mathcal{F}})$ be a vf-safe delta-matroid and let $e\in E$.
\begin{enumerate}
 \item If $e$ is a  loop, then
\[  Q_{(\boldsymbol\alpha, \boldsymbol\beta, \boldsymbol\gamma)}(D;t) =
\alpha_e Q_{(\boldsymbol\alpha, \boldsymbol\beta, \boldsymbol\gamma)}(D\backslash e;t)
+ t \beta_e Q_{(\boldsymbol\alpha, \boldsymbol\beta, \boldsymbol\gamma)}(D/e;t)
+\gamma_e Q_{(\boldsymbol\alpha, \boldsymbol\beta, \boldsymbol\gamma)}((D+e)/e ;t) .\]

\item If $e$ is a trivial non-orientable ribbon loop, then
 \[  Q_{(\boldsymbol\alpha, \boldsymbol\beta, \boldsymbol\gamma)}(D;t) =
\alpha_e Q_{(\boldsymbol\alpha, \boldsymbol\beta, \boldsymbol\gamma)}(D\backslash e;t)
+ \beta_e Q_{(\boldsymbol\alpha, \boldsymbol\beta, \boldsymbol\gamma)}(D/e;t)
+t\gamma_e Q_{(\boldsymbol\alpha, \boldsymbol\beta, \boldsymbol\gamma)}((D+e)/e ;t) .\]

\item If $e$ is  a coloop, then
\[  Q_{(\boldsymbol\alpha, \boldsymbol\beta, \boldsymbol\gamma)}(D;t) =
t \alpha_e Q_{(\boldsymbol\alpha, \boldsymbol\beta, \boldsymbol\gamma)}(D\backslash e;t)
+ \beta_e Q_{(\boldsymbol\alpha, \boldsymbol\beta, \boldsymbol\gamma)}(D/e;t)
+\gamma_e Q_{(\boldsymbol\alpha, \boldsymbol\beta, \boldsymbol\gamma)}((D+e)/e ;t) .\]

\item If $e$  does not meet the above conditions, then
 \[  Q_{(\boldsymbol\alpha, \boldsymbol\beta, \boldsymbol\gamma)}(D;t) =
\alpha_e Q_{(\boldsymbol\alpha, \boldsymbol\beta, \boldsymbol\gamma)}(D\backslash e;t)
+ \beta_e Q_{(\boldsymbol\alpha, \boldsymbol\beta, \boldsymbol\gamma)}(D/e;t)
+\gamma_e Q_{(\boldsymbol\alpha, \boldsymbol\beta, \boldsymbol\gamma)}((D+e)/e ;t) .\]

\item $Q_{(\boldsymbol\alpha, \boldsymbol\beta, \boldsymbol\gamma)}(D;t) = 1$, when $E=\emptyset$.
\end{enumerate}

\end{theorem}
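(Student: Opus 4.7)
The plan is to work directly from the definition in Equation~\eqref{deltatransdef}. Fix $e\in E$ and partition the sum over ordered triples $(A,B,C)$ according to which block contains $e$:
\[Q_{(\boldsymbol\alpha,\boldsymbol\beta,\boldsymbol\gamma)}(D;t) = \alpha_e\, S_A + \beta_e\, S_B + \gamma_e\, S_C,\]
where each $S_\star$ is the analogous sum over ordered partitions $(A',B',C')$ of $E-e$, with exponent $d_{D\ast B'\bar\ast C'}$, $d_{D\ast(B'\cup e)\bar\ast C'}$, or $d_{D\ast B'\bar\ast(C'\cup e)}$ respectively. The target is to show $S_A=t^{\epsilon_A}Q(D\ba e;t)$, $S_B=t^{\epsilon_B}Q(D/e;t)$, $S_C=t^{\epsilon_C}Q((D+e)/e;t)$ with $\epsilon_A,\epsilon_B,\epsilon_C\in\{0,1\}$, at most one of which equals $1$, in the pattern matching the four cases of the theorem.

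Next I would establish the bridging identities $(D\ast e)\ba e = D/e$ and $(D\bar\ast e)\ba e = (D+e)/e$ by direct feasible-set calculations; combined with the commutativity of $\ast$, $+$, $\bar\ast$ with deletion on disjoint elements (extending Lemma~\ref{lem:opsswitch}), they let me rewrite each of the three exponents as $d_{N\ast B'\bar\ast C'}$ for $N\in\{D,\, D\ast e,\, D\bar\ast e\}$ respectively and then compare with $d$ on the corresponding deletion $(N\ba e)\ast B'\bar\ast C'=(N\ast B'\bar\ast C')\ba e$.

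The core step is the identity $d_M = d_{M\ba e} + [e\text{ is a coloop of }M]$ for any delta-matroid $M$, which combines the standard matroid rank-deletion identity applied to $M_{\min}$ with the non-trivial equivalence that $e$ is a coloop of $M$ if and only if $e$ is a coloop of $M_{\min}$, proved via the symmetric exchange axiom applied to a minimum feasible set containing $e$ and a non-minimum feasible set avoiding $e$. Apply this to $M = D\ast B'\bar\ast C'$, and use the direct feasible-set check that being a (delta-matroid) coloop is preserved under $\ast B'$ and $\bar\ast C'$ when $B'\cup C'\subseteq E-e$, to conclude that $S_A$ carries a factor of $t$ uniformly in $(B',C')$ exactly when $e$ is a coloop of $D$; similarly $S_B$ carries a factor exactly when $e$ is a coloop of $D\ast e$, and $S_C$ exactly when $e$ is a coloop of $D\bar\ast e$. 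Further feasible-set manipulations identify these three coloop conditions with, respectively, $e$ being a coloop of $D$; $e$ being a delta-matroid loop of $D$, i.e.\ a trivial orientable ribbon loop; and the condition $F\in\mathcal F$ iff $F\cup e\in\mathcal F$ for every $F\subseteq E-e$, i.e.\ $e$ being a trivial non-orientable ribbon loop, using the characterisation following Theorem~\ref{thm:minstuff}. These three conditions are pairwise exclusive, so together with Case~4 they recover the four cases of the statement; the base case $E=\emptyset$ is immediate.

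The step I expect to be most delicate is the equivalence between $e$ being a coloop of $M$ and a coloop of $M_{\min}$ in the core step, since it bridges the matroid-level rank identity with the delta-matroid-level coloop hypothesis and relies on the symmetric exchange axiom in a non-obvious way. A cleaner alternative is to cite Brijder and Hoogeboom's recursion from~\cite{BHpre2} directly and verify that their classification of elements agrees with our four ribbon-loop cases via the vocabulary of Section~\ref{4th}.
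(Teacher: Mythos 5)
Your proposal is correct, but it takes a genuinely different route from the paper: the paper does not prove Theorem~\ref{t.dqdct} from scratch at all, but simply cites Theorem~3 of Brijder and Hoogeboom~\cite{BHpre2} and observes that the term $(D\bar\ast e)\ba e$ appearing there equals $(D+e)/e$ (the alternative you mention in your final sentence is in fact exactly what the paper does). Your direct argument from Equation~\eqref{deltatransdef} is sound: splitting the sum by the block containing $e$, commuting the operations on $B'$ and $C'$ past $e$, and using the bridging identities $(D\ast e)\ba e = D/e$ and $(D\bar\ast e)\ba e = (D+e)/e$ reduces everything to the identity $d_M = d_{M\ba e} + [e\text{ is a coloop of }M]$; your symmetric-exchange argument for ``$e$ a coloop of $M_{\min}$ implies $e$ a coloop of $M$'' works (exchange $e\in F_0\btu F$ out of a minimum feasible set $F_0$ against a feasible $F$ avoiding $e$ and derive a contradiction in each subcase), the invariance of the coloop property under $\ast B'$ and $\bar\ast C'$ for $e\notin B'\cup C'$ is a routine feasible-set check, and the three coloop conditions on $D$, $D\ast e$, $D\bar\ast e$ translate via Lemmas~\ref{l.rloopsdual} and~\ref{l.rloopstwist} and Theorem~\ref{thm:minstuff} into ``coloop'', ``trivial orientable ribbon loop'' and ``trivial non-orientable ribbon loop'' respectively, which are pairwise exclusive. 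What your approach buys is a self-contained proof that makes transparent exactly where each factor of $t$ arises (as a coloop indicator in one of the three twisted duals); what the paper's approach buys is brevity, at the cost of sending the reader to~\cite{BHpre2} and a small translation of notation.
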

The theorem is Theorem~3 of~\cite{BHpre2} except that $Q_{(\boldsymbol\alpha, \boldsymbol\beta, \boldsymbol\gamma)}((D\bar{\ast}e)\ba e ;t)$ appears in the relations in~\cite{BHpre2} rather than $Q_{(\boldsymbol\alpha, \boldsymbol\beta, \boldsymbol\gamma)}((D+e)/e ;t)$. As we noted after Proposition~\ref{p.pdc}, these two delta-matroids are the same.

By using Proposition~\ref{prop.qd} and considering delta-matroids instead of ribbon graphs, we can see that Theorem~\ref{t.dqdct} is the direct ribbon graph analogue of the  recursion relation for the ribbon graph version of the transition polynomial from~\cite{EMM}: \begin{equation*}\label{e.dctrans}
Q(G; (\boldsymbol\alpha, \boldsymbol\beta, \boldsymbol\gamma), t)= \alpha_e Q(G/e; (\boldsymbol\alpha, \boldsymbol\beta, \boldsymbol\gamma), t) + \beta_e Q(G\backslash e; (\boldsymbol\alpha, \boldsymbol\beta, \boldsymbol\gamma), t)+\gamma_e Q(G^{\tau(e)}/e; (\boldsymbol\alpha, \boldsymbol\beta, \boldsymbol\gamma), t).
\end{equation*}

\medskip

The \emph{Bollob\'as--Riordan polynomial} of a ribbon graph $G$, from~\cite{BR1,BR2}, is given by
\[R(G;x,y,z) = \sum_{A \subseteq E( G)}   (x - 1)^{r( E ) - r( A )}   y^{|A|-r(A)} z^{\gamma(A)}  ,\]
where $r(A)$  and  $\gamma(A)$ are as defined in  Section~\ref{dbh}. Chun et al. established in~\cite{CMNR} that  $R(G)$ is delta-matroidal in the sense that it is determined by $D(G)$ (this fact also follows from Traldi~\cite{Tr15trans}).
For a delta-matroid $D$ and subset $A$ of its elements, let $D|A:= D\setminus A^c$.
In~\cite{CMNR},  the Bollob\'as--Riordan polynomial was extended to delta-matroids by
  \[   R(D;x,y,z) := \sum_{A \subseteq E}   (x - 1)^{r_{D_{\min}}( E ) - r_{D_{\min}}( A )}   y^{|A|-r_{D_{\min}}(A)} z^{r((D|A)_{\max})-r((D|A)_{\min})},\]
and it was shown that $R(G;x,y,z)=R(D(G);x,y,z)$, for each ribbon graph $G$.

It is known that for ribbon graphs, the Penrose polynomial and the  Bollob\'as--Riordan polynomial are encapsulated by the transition polynomial (see~\cite{EMM11b} and~\cite{ES11} respectively). These relations between the polynomials hold more generally for delta-matroids.
Brijder and Hoogeboom~\cite{BH12} showed that for a vf-safe delta-matroid, $Q_{(0,1,-1)}(D; \lambda) = P(D; \lambda)$. We also obtain the Bollob\'as--Riordan polynomial as a specialization of the transition polynomial.
\begin{proposition}\label{p.dqpr}
Let $D$ be a delta-matroid. Then
\[Q_{(1,\sqrt{y/x}, 0)} \big(D ; \sqrt{xy}\big) =   (\sqrt{y/x})^{r(E)}   R(D;  x+1, y, 1/\sqrt{xy}).\]
\end{proposition}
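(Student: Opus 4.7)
Plan. First, substitute the weight system $(\boldsymbol\alpha,\boldsymbol\beta,\boldsymbol\gamma)=(1,\sqrt{y/x},0)$ into the definition~\eqref{deltatransdef}. Because every $\gamma_e=0$, the only contributing triples $(A,B,C)\in\mathcal{P}_3(E)$ are those with $C=\emptyset$, and each such triple is determined by $B$. Setting $t=\sqrt{xy}$ yields
\[
Q_{(1,\sqrt{y/x},0)}(D;\sqrt{xy}) \;=\; \sum_{B\subseteq E}\bigl(\sqrt{y/x}\bigr)^{|B|}\bigl(\sqrt{xy}\bigr)^{d_{D\ast B}}.
\]
Writing $r(\cdot):=r_{D_{\min}}(\cdot)$ and $\rho(A):=r((D|A)_{\max})-r((D|A)_{\min})$, the right-hand side of the proposition expands as
\[
\bigl(\sqrt{y/x}\bigr)^{r(E)}R(D;x+1,y,1/\sqrt{xy}) \;=\; \sum_{A\subseteq E}\bigl(\sqrt{y/x}\bigr)^{r(E)}x^{r(E)-r(A)}y^{|A|-r(A)}(xy)^{-\rho(A)/2}.
\]

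The plan is to match these two sums term-by-term under the identification $B=A$. Equating the exponents of $x$ and of $y$ independently in the two corresponding monomials yields, in each case, the same numerical identity
\begin{equation*}
d_{D\ast A} \;=\; r(E) + |A| - 2r(A) - \rho(A),\qquad A\subseteq E. \tag{$\ast$}
\end{equation*}
Thus the proposition is equivalent to $(\ast)$.

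To prove $(\ast)$, I would induct on $|E|$. The base case $E=\emptyset$ is immediate since both sides vanish. For the inductive step, choose $e\in E$ and split into cases according to whether $e\in A$ and the role of $e$ in $D_{\min}$ (loop, coloop, or neither). The identities $(D\ast A)\setminus e=(D\setminus e)\ast A$ for $e\notin A$ and $(D\ast A)/e=(D/e)\ast(A-e)$ for $e\in A$ reduce $d_{D\ast A}$ to the analogous quantity in a smaller delta-matroid, while $r(E)$ and $r(A)$ obey the standard matroid deletion--contraction formulae. The main obstacle is controlling $\rho(A)$ under deletion or contraction of $e$; I would handle this by unfolding $\rho(A)$ into the ranks of the matroids $(D|A)_{\min}$ and $(D|A)_{\max}$ and using that each of these matroids deforms in the expected way as $D$ is reduced by $e$ (an element inside or outside $A$ restricts or contracts the corresponding matroid in the familiar way). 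As a sanity check, when $D=D(G)$ is ribbon-graphic, Theorem~\ref{t.compat} identifies $\rho(A)$ with the Euler genus of the spanning ribbon subgraph on $A$, and $(\ast)$ then reduces to Euler's formula for that subgraph, confirming the form of the identity. Given $(\ast)$, the proposition follows by the term-by-term matching described above.
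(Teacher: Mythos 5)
Your reduction of the proposition to the single identity
\[
(\ast)\qquad d_{D\ast A} \;=\; r(E) + |A| - 2r(A) - \bigl(r((D|A)_{\max})-r((D|A)_{\min})\bigr),
\qquad A\subseteq E,
\]
is correct: with $\gamma_e=0$ only the triples with $C=\emptyset$ survive, matching the $B=A$ terms forces $(\ast)$ from either exponent, and your Euler-formula sanity check for $D=D(G)$ is also right. This is exactly the computation the paper performs; the only difference is how $(\ast)$ is then established. The paper gets it at once from two facts quoted from~\cite{CMNR}, namely $r((D|A)_{\min})=r_{D_{\min}}(A)$ and
\[
r((D|A)_{\max}) \;=\; \rho_D(A)-\bigl(|E|-r(E)\bigr)+\bigl(|A|-r(A)\bigr),
\qquad\text{where } \rho_D(A)=|E|-\min\{|A\bigtriangleup F|: F\in\mathcal{F}(D)\}=|E|-d_{D\ast A},
\]
and unwinding these gives $(\ast)$ with no induction at all.

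The genuine gap is the step you yourself flag as ``the main obstacle'' and then dispose of by assertion. The upper matroid of a delta-matroid does \emph{not} deform ``in the expected way'' under removal of an element: $(D\setminus e)_{\max}$ need not equal $D_{\max}\setminus e$. For instance, for $D=(\{a,b\},\{\emptyset,\{a,b\}\})$ (the delta-matroid of two interlaced orientable loops) one has $r(D_{\max})=2$ but $r((D\setminus a)_{\max})=0$; in general the rank of the maximum feasible set avoiding $e$ can be $r(D_{\max})$, $r(D_{\max})-1$ or $r(D_{\max})-2$, and which occurs depends on whether $e$ is an orientable or non-orientable, trivial or non-trivial ribbon loop. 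This is precisely why the deletion--contraction relations for $R(D)$ in the paper's final corollary carry different coefficients ($y$, $y/x$, $\sqrt{y/x}$, \ldots) in those cases. Closing your induction therefore requires either the symmetric exchange axiom together with a case analysis by ribbon-loop type of the kind carried out in Section~\ref{4th}, or a direct appeal to the rank formula for $(D|A)_{\max}$ above; your sketch supplies neither, so as written the inductive step does not go through. Substituting that rank formula for the inductive step turns your argument into the paper's proof.
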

\begin{proof}
Let $D=(E,\mathcal{F})$ be a delta-matroid.
For simplicity we write $ n(A)$ for  $n_{D_{\min}}(A)$, and $r(A)$ for  $r_{D_{\min}}(A)$.
In~\cite{CMNR}, it is shown that
$r((D|A)_{\min})=r(A)$ and
\[r((D|A)_{\max})=\rho_D(A)-n(E) + n(A),\]
where
\begin{align*}
 \rho_D (A)  &=  |E|-\min\{|A\bigtriangleup F| :  F\in \mathcal{F}(D)\} =  |E|-\min\{ |F'| :    F'=A\bigtriangleup F, F\in \mathcal{F}(D)\}\\
  &=  |E|-\min\{|F'| :    F'\in \mathcal{F}(D\ast A)\} = |E| - d_{D\ast A}.
 \end{align*}
Thus
\begin{align*}
 R(D;x+1,y,1/\sqrt{xy}) &= \sum_{B \subseteq E}   x^{r(E) - r(A)}   y^{|A|-r(A)}
 (1/\sqrt{xy})^{|E|-d_{D\ast B}-n(E)+n(A)-r(A)}\\
 & =  (\sqrt{x/y})^{r(E)}  \sum_{(A,B,C) \in \mathcal{P}_3  (E)}  1^{|A|} (\sqrt{y/x})^{|B|}  0^{|C|} (\sqrt{xy})^{d_{D\ast B}}
  \\
 & = (\sqrt{x/y})^{r(E)}  Q_{(1, \sqrt{y/x}, 0)}(D; \sqrt{xy}).
  \end{align*}
\end{proof}

Our final result is the following corollary.
\begin{corollary}
Let $D=(E,{\mathcal{F}})$ be a  delta-matroid, and $e\in E$.
\begin{enumerate}
 \item If $e$ is a  loop, then
\[     R(D;  x+1, y, 1/\sqrt{xy}) =
    R(D \ba e;  x+1, y, 1/\sqrt{xy})
+ y   R(D / e;  x+1, y, 1/\sqrt{xy}).
 \]

\item If $e$ is a non-trivial, orientable ribbon loop, then
 \[     R(D;  x+1, y, 1/\sqrt{xy}) =
   R(D \ba e;  x+1, y, 1/\sqrt{xy})
+ (y/x)    R(D / e;  x+1, y, 1/\sqrt{xy}).
 \]

 \item If $e$ is a  non-orientable ribbon loop, then
 \[     R(D;  x+1, y, 1/\sqrt{xy}) =
     R(D \ba e;  x+1, y, 1/\sqrt{xy})
+ (\sqrt{y/x})   R(D / e;  x+1, y, 1/\sqrt{xy}).
 \]

\item If $e$ is  a coloop, then
\[     R(D;  x+1, y, 1/\sqrt{xy}) =
x  R(D \ba e;  x+1, y, 1/\sqrt{xy})
+   R(D / e;  x+1, y, 1/\sqrt{xy}).
 \]

\item If $e$ is not a ribbon loop or a coloop, then
 \[     R(D;  x+1, y, 1/\sqrt{xy}) =
   R(D \ba e;  x+1, y, 1/\sqrt{xy})
+  R(D / e;  x+1, y, 1/\sqrt{xy}).
 \]

\item $R( D ;  x+1, y, 1/\sqrt{xy}   ) = 1$, when $E=\emptyset$.
\end{enumerate}
\end{corollary}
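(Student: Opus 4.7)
The plan is to derive this corollary from the transition polynomial recursion of Theorem~\ref{t.dqdct} by specializing the weight system to $(\alpha_e,\beta_e,\gamma_e)=(1,\sqrt{y/x},0)$ with $t=\sqrt{xy}$, and then converting back to the Bollob\'as--Riordan polynomial via Proposition~\ref{p.dqpr}. Since $\gamma_e = 0$, the summand involving $(D+e)/e$ drops out of every recursion in Theorem~\ref{t.dqdct}, explaining why the corollary has only two terms on the right hand side. I would match the six cases of the corollary to the five cases of the theorem as follows: corollary cases~(1), (4) and (6) correspond directly to theorem cases~(1), (3), (5); corollary case~(3) on non-orientable ribbon loops covers theorem case~(2) together with the non-trivial non-orientable sub-case of theorem case~(4), both of which produce the same two-term recursion once $\gamma_e = 0$; and corollary cases~(2) and (5) are the remaining sub-cases of theorem case~(4).

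In each case the substitution gives an identity of the shape $Q(D) = c_{\setminus}\,Q(D\setminus e) + c_{/}\,Q(D/e)$, with scalars $c_{\setminus},c_{/}$ read off from the relevant line of the theorem (a factor of $t=\sqrt{xy}$ appears in front of exactly one term for the coloop and trivial-orientable-loop cases, and in front of neither term in the other cases). To pass to $R$, I would apply Proposition~\ref{p.dqpr} to every $Q$ occurring and divide through by $(\sqrt{y/x})^{r(D_{\min})}$; the resulting coefficient of $R(D/e;\dots)$ is $c_{/}(\sqrt{y/x})^{r((D/e)_{\min})-r(D_{\min})}$, and analogously for the deletion term. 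So apart from the case analysis itself, the only real work is computing how $r(D_{\min})$ changes under deletion and contraction for each type of element.

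Pinning down those rank changes is the main obstacle. Several follow at once: a ribbon loop is by definition a loop of $D_{\min}$ and hence preserves rank under deletion; a coloop forces $D\setminus e = D/e$ with rank reduced by one; and an element that is neither a ribbon loop nor a coloop behaves in the classical matroidal way, with rank changes $0$ under deletion and $-1$ under contraction. The subtle computation is the contraction rank at a ribbon loop. Writing $s_0 = \min\{|F|:F\in\mathcal{F},\,e\notin F\}$ and $s_1 = \min\{|F|:F\in\mathcal{F},\,e\in F\}$, the symmetric exchange axiom applied to feasible sets realising $s_0$ and $s_1$ forces $s_1 \le s_0+2$. Combining this with Theorem~\ref{thm:minstuff}, which asserts $d_D = d_{D\bar{\ast} e}$ when $e$ is orientable and $d_D = d_{D\ast e}$ when $e$ is non-orientable, pins down $s_1 = s_0+2$ (so $r((D/e)_{\min}) = r(D_{\min})+1$) for a non-trivial orientable ribbon loop and $s_1 = s_0+1$ (so $r((D/e)_{\min}) = r(D_{\min})$) for a non-orientable ribbon loop; the trivial orientable case needs no argument, since then $D/e=D\setminus e$. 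Raising $\sqrt{y/x}$ to each of these exponents and multiplying by the corresponding $c_{\setminus},c_{/}$ produces exactly the coefficients $y$, $y/x$, $\sqrt{y/x}$, $x$ and $1$ appearing in cases~(1)--(5), while case~(6) is immediate from $Q_{(1,\sqrt{y/x},0)}(D;\sqrt{xy})=1$ and $r(\emptyset)=0$.
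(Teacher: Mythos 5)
Your proposal is correct and follows essentially the same route as the paper: specialize Theorem~\ref{t.dqdct} at the weight system $(1,\sqrt{y/x},0)$ with $t=\sqrt{xy}$, pass to $R$ via Proposition~\ref{p.dqpr}, and reduce everything to the rank shifts $d_{D\setminus e}-d_D$ and $d_{D/e}-d_D$, with the only delicate case being $d_{D/e}-d_D$ for a non-trivial orientable ribbon loop. There you pin down $s_1=s_0+2$ by combining the Symmetric Exchange Axiom with Theorem~\ref{thm:minstuff}, whereas the paper writes $D/e=(D\ast e)\setminus e$ and invokes Lemma~\ref{lem:useful}; both arguments rest on the same fact $d_{D\ast e}=d_D+1$ and are interchangeable.
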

\begin{proof}
The result follows  from Theorem~\ref{t.dqdct} and Proposition~\ref{p.dqpr} after simplifying the terms $d_{D\ba e}-d_D$ and $d_{D/e}-d_D$. The simplification of these terms is straightforward except for the computation of $d_{D/e}-d_D$ when $e$ is a non-trivial orientable ribbon loop.
Since $e$ is an orientable ribbon loop, it is not a ribbon loop in $D*e$, so $d_{D*e}=d_D+1$. Moreover, $e$ is non-trivial, so it is not a coloop of $D*e$, therefore by Lemma~\ref{lem:useful}, there is a minimal feasible set of $D*e$ that does not contain $e$. Hence $d_{D/e}=d_{(D*e)\ba e}=d_{D*e}=d_D+1$.
\end{proof}
By restricting to the delta-matroids of ribbon graphs we can recover the deletion-contraction relations for the ribbon graph version of the polynomial that appears in Corollary~4.42 of~\cite{EMMbook}.

\bibliographystyle{amsplain}

\affiliationone{
   C. Chun \\
   Mathematics Department, \\United States Naval Academy\\Annapolis\\ Maryland \\21402-5002\\ United States of America
   \email{chun@usna.edu}}
   \affiliationtwo{
   I. Moffatt\\
   Department of Mathematics\\ Royal Holloway \\ University of London\\ Egham Surrey \\TW20 0EX\\ United Kingdom
   \email{iain.moffatt@rhul.ac.uk}}
  \affiliationthree{ S. D. Noble\\
  Department of Economics, Mathematics and Statistics\\ Birkbeck, University of London\\ London\\ WC1E 7HX\\ United Kingdom
   \email{s.noble@bbk.ac.uk}}
\affiliationfour{
  R. Rueckriemen\\
   Aschaffenburger Strasse 23 \\ 10779 Berlin
   \email{ralf@rueckriemen.de}}
\end{document}